\documentclass{amsart}
\usepackage{amsfonts}

\setcounter{MaxMatrixCols}{10}

\newtheorem{theorem}{Theorem}
\theoremstyle{plain}

\newtheorem{corollary}{Corollary}

\newtheorem{lemma}{Lemma}

\newtheorem{proposition}{Proposition}
\newtheorem{remark}{Remark}

\numberwithin{equation}{section}
\input{tcilatex}

\begin{document}
\title[Pochhammer identities]{A few finite and infinite identities involving
Pochhammer and $q$-Pochhammer symbols obtained via analytical methods}
\author{Pawe\l\ J. Szab\l owski}
\address{Department of Department of Mathematics \\
and Information Sciences, \\
Warsaw University of Technology, ul Koszykowa 75, 00-662 Warsaw, Poland}
\email{pawel.szablowski@gmail.com}
\date{November , 2024}
\subjclass[2020]{Primary 33B15, 33D45; Secondary 33C45, 26C15}
\keywords{Beta distribution , Jacobi polynomials, $q-$Hermite,
Al-Salam--Chihara, Askey--Wilson polynomials, connection coefficients,
rising factorials, $q-$Pochhammer symbol.}

\begin{abstract}
We present several identities with a form of polynomials or rational
functions that involve Pochhammer and $q$-Pochhammer symbols and $q$%
-binomials (i.e. Gauss polynomials). All these identities were obtained by
some analytical methods based on infinite expansions of the ratio of
densities in a Fourier series of polynomials orthogonal with respect to the
density in the denominator. We want a unified approach to justify many known
and unknown identities. The purpose of studying these identities is to
simplify calculations occurring while dealing with Pochhammer and $q$%
-Pochhammer symbols. Additional possible applications of the results
presented in the paper are applications within the Combinatorics and the
transformation formulae of hypergeometric and basic hypergeometric functions.
\end{abstract}

\thanks{The author is grateful to an unknown referee for his (her) general
and also quite detailed remarks regarding the improvement of the paper.}
\maketitle

\section{Introduction}

We will follow the ideas presented in \cite{Szablowski2010(1)}, \cite{Szab13}%
, \cite{SzabChol}. Assume that we have two positive, probability measures
say $\mu $ and $\nu $ (most often absolutely continuous with respect to the
Lebesgue measure, i.e., both having densities) and absolutely continuous
with respect to one another. Assume also that we know the two sets of
polynomials $\left\{ \alpha _{n}\right\} $ and $\left\{ \beta _{n}\right\} $
orthogonal with respect to, respectively, $\mu $ and $\nu .$ Within the
paper $\symbol{94}$ over the symbol of orthogonal polynomial $p_{n}$, that
is a member of some family $\left\{ p_{n}\right\} $ would mean the number 
\begin{equation*}
\hat{p}_{n}\allowbreak =\allowbreak \int p_{n}^{2}(x)\mu (dx),
\end{equation*}%
where $\mu $ denotes the probability measure that makes the family $\left\{
p_{n}\right\} $ orthogonal.

Let us denote $\frac{d\mu }{d\nu }$ Radon-Nikodym derivative of $d\mu $ with
respect to $d\nu $ and suppose, that%
\begin{equation*}
\int (\frac{d\mu }{d\nu })^{2}d\nu
\end{equation*}%
is finite.

We need to formulate the general theorem, which will be utilized multiple
times below.

\begin{theorem}
\label{podst}Assume that we have two positive real probability measures $\mu 
$ and $\nu $ that are absolutely continuous with respect to one another and
such that 
\begin{equation}
\int (\frac{d\mu }{d\nu })^{2}d\nu <\infty .  \label{sq_i}
\end{equation}%
Let us also assume that one knows the so-called connection coefficients
between the families of polynomials $\left\{ \alpha _{n}\right\} $ and $%
\left\{ \beta _{n}\right\} $, which are orthogonal with respect to
respectively $\mu $ and $\nu $. That is, we assume that we know the set of
coefficients $\left\{ c_{k,n}\right\} _{n\geq 1,0\leq k\leq n}$ defined by
the relationship%
\begin{equation}
\beta _{n}\allowbreak (x)=\allowbreak \sum_{k=0}^{n}c_{n,k}\alpha _{k}(x).
\label{con1}
\end{equation}%
Then, the following expansion 
\begin{equation}
\frac{d\mu }{d\nu }(x)=\sum_{n\geq 0}a_{n}\beta _{n}(x),  \label{basicEx}
\end{equation}%
where 
\begin{equation*}
a_{n}\allowbreak =\allowbreak c_{n,0}/\hat{\beta}_{n},~~\hat{\beta}%
_{n}\allowbreak =\allowbreak \int \beta _{n}^{2}d\nu ,
\end{equation*}
is convergent in $L^{2}(\mathbb{R},d\nu )$, that is in mean-squares (m-s)
sense (mod $\nu ).$
\end{theorem}

\begin{proof}
First of all, the existence and the mean-squares convergence of the
expansion (\ref{basicEx}) follows the general theory of orthogonal series
(see e.g. \cite{Alexits61}) and, in particular, the formula (\ref{sq_i}).
The justification of the formula for the coefficients $\left\{ a_{n}\right\} 
$ is very simple. Namely, if we multiply both sides of (\ref{basicEx}) by $%
\beta _{n}$ and integrate both sides with respect to $d\nu $. Then we get on
the left-hand side 
\begin{equation*}
\int \beta _{n}(x)\frac{d\mu }{d\nu }(x)d\nu (x)\allowbreak =\allowbreak
\int \beta _{n}(x)d\mu (x)\allowbreak =\allowbreak c_{0,n},
\end{equation*}%
while on the right-hand side we get 
\begin{equation*}
a_{n}\int \beta ^{2}(x)d\nu (x)\allowbreak =\allowbreak a_{n}\hat{\beta}_{n}.
\end{equation*}
\end{proof}

Let us remark that we will use the terms connection coefficient and CC in
exchange.

We know also, that the mean-squares convergence implies that $\sum_{n\geq
0}\left\vert a_{n}\right\vert ^{2}<\infty $. If additionally, we know that 
\begin{equation*}
\sum_{n\geq 0}\left\vert a_{n}\right\vert ^{2}\log ^{2}(n+1)<\infty ,
\end{equation*}%
then, by the Rademacher--Meshov theorem, we deduce that the series in
question converges not only in $L^{2}$, but also almost everywhere with
respect to $d\nu $. For more details see e.g. \cite{Alexits61} or any other
book on analysis large enough, to contain a section on orthogonal series. In
the sequel, all considered densities will be supported only on the bounded
segment $[-r,r]$ with finite $r$. For the proper values of some additional
parameters, all these densities are bounded and hence their ratios will be
square integrable. Thus, we will get the condition $\sum_{n\geq 0}\left\vert
a_{n}\right\vert ^{2}<\infty $ satisfied for free. Moreover, in all cases we
will have $\left\vert a_{n}\right\vert ^{2}\leq \rho ^{n}$ for some $\rho <1$%
. Hence the condition $\sum_{n\geq 0}\left\vert a_{n}\right\vert ^{2}\log
^{2}(n+1)<\infty $ is often also naturally satisfied.

\begin{remark}
Obtaining the set of connection coefficients is a very tedious and difficult
task. They were found for only a few pairs of families of orthogonal
polynomials. There exist algorithms how to get them by either using the
coefficients of the three-term recurrences of the given families of
polynomials (see, e.g., \cite{Maroni08}, Theorem 3.1, or recently \cite%
{LWS23}, Corollary 3.2 ) or deducing them from the moment sequences that
generate the given families of orthogonal polynomials (see, e.g., \cite%
{SzabChol}, section 3).
\end{remark}

Now, having expansions (\ref{con1}) and the relationship that is converse to
(\ref{con1}), i.e., the one: 
\begin{equation}
\alpha _{n}(x)=\allowbreak \sum_{k=0}^{n}\bar{c}_{n,k}\beta _{k}(x),
\label{con2}
\end{equation}%
we know (following the properties of orthogonal polynomials) that for all $%
n\geq 1$, we get%
\begin{eqnarray}
0 &=&\int_{\limfunc{supp}(\mu )}\alpha _{n}(x)d\mu \left( x\right) =\int_{%
\limfunc{supp}(\mu )}\left( \sum_{j=0}^{n}\bar{c}_{j,n}\beta _{j}(x)\right)
\left( \sum_{k\geq 0}a_{k}\beta _{k}(x)\right) d\nu (x)  \notag \\
&=&\sum_{j=0}^{n}\bar{c}_{n,j}a_{j}\hat{\beta}_{j}=\sum_{j=0}^{n}\bar{c}%
_{n,j}c_{j,0}.  \label{02}
\end{eqnarray}%
If we have proven similar expansion for the ratio $\frac{d\nu }{d\mu }(x)$,
provided of course that%
\begin{equation*}
\int \left( \frac{d\nu }{d\mu }\right) ^{2}d\mu <\infty ,
\end{equation*}%
then we have for all $n\geq 1$%
\begin{equation*}
0=\sum_{j=0}^{n}c_{j,n}b_{j}\hat{\alpha}_{j},
\end{equation*}%
where the numbers $\left\{ b_{j}\right\} $ are defined by the expansion:%
\begin{equation*}
\frac{d\nu }{d\mu }(x)=\sum_{n\geq 0}b_{n}\alpha _{n}(x).
\end{equation*}%
Then, by analogy we also get the following identity true for all $n\geq 1$:%
\begin{equation}
0=\sum_{j=0}^{n}c_{n,j}\bar{c}_{j,0}.  \label{01}
\end{equation}

In the sequel we will be using the so-called infinite lower triangular
matrices $A$ understood as the sequence of lower triangular matrices $%
\left\{ A_{n}\right\} _{n\geq 0}$ such that $A_{0}$ is a number, $A_{n}$ is $%
(n+1)\times (n+1)$, an upper left sub-matrix of the matrix $A_{n+1}$. The
inverse of $A$ is understood as the sequence of inverses of matrices $A_{n}$
i.e. $A^{-1}\allowbreak =\allowbreak \left\{ A_{n}^{-1}\right\} $. For
example, we have $C\allowbreak =\allowbreak \left[ c_{n,j}\right] $ means
the lower triangular matrix composed of elements $c_{n,j}$ on the $n-$th, $%
j- $th position, with $n\allowbreak =\allowbreak 0,\ldots $, $j\allowbreak
=\allowbreak 0,\ldots ,n$.

\begin{remark}
Notice, that the coefficients $\left\{ c_{j,n}\right\} $ and $\left\{ \bar{c}%
_{j,n}\right\} $ can be set together into two infinite lower triangular
matrices that are inverse of one another. So it is natural that we have 
\begin{equation*}
0=\sum_{j=0}^{n}c_{j,n}\hat{c}_{0,j}=\sum_{j=0}^{n}\bar{c}_{j,n}c_{0,j}.
\end{equation*}
The reasoning from above gives an analytical sense to these identities. By
the way, the fact that the two lower triangular matrices built of
coefficients $\left\{ c_{n,j}\right\} $ and $\left\{ \bar{c}_{n,j}\right\} $
are inverses of one another, implies that the identities (\ref{01}) and (\ref%
{02}) can be extended. Namely, for all $n>j\geq 0$ we have%
\begin{eqnarray*}
\sum_{k=j}^{n}c_{n,k}\bar{c}_{k,j} &=&0, \\
\sum_{k=j}^{n}\bar{c}_{n,k}c_{k,j} &=&0.
\end{eqnarray*}%
Hence, we can extend (\ref{01}) and (\ref{02}) a bit.
\end{remark}

\begin{remark}
As can be noticed, the crucial role in obtaining the identities mentioned
above, is played by the connection coefficients. The Askey-Wilson family of
polynomials is a large family of polynomials for which these connection
coefficients are known or can be relatively easily obtained. It has been
described in detail in \cite{AW85},\cite{KLS},\cite{IA}. So we will not
define it, just referring the reader to these positions of literature. This
is due to the excellent work of Askey and Wilson who in their paper \cite%
{AW85} provided such a set of connection coefficients for every two members
of the AW family having different $3$ out of $4$ parameters (not counting
the one more parameter called base and usually denoted by $q$).
\end{remark}

\begin{remark}
Another family of polynomials for which the connection coefficients can be
easily obtained are the Jacobi polynomials. It is also known, that the
so-called Beta distribution makes these polynomials orthogonal. This is due
to the formulae provided, e.g., in \cite{IA} section 4.2. In the sequel, we
will prove these formulae for the connection coefficient once more in a
different way for the completeness of the paper.
\end{remark}

Let us remark that the AW family of polynomials will be the source of the
identities involving the $q-$Pochhammer symbol, while the Jacobi family will
be the source of identities involving simply the Pochhammer symbol.

For the sake of completeness of the paper, we will briefly introduce the two
families of polynomials. Let us also remark that the family of Chebyshev
polynomials is a subset of the two considered above families of polynomials.

The Jacobi polynomials seem to be simpler hence they will be analyzed first.
The AW family of polynomials is richer and more complicated and thus will be
considered next. There are $5$ families of orthogonal polynomials from the
AW scheme with increasing numbers of parameters ranging from $0$ to $4$
(without counting the base $q$) and two additional, but related families or
polynomials i.e. Chebyshev polynomials of the second kind and the so-called $%
q-$ultraspherical or Rogers polynomials. Hence, we have theoretically $%
\binom{7}{2}\allowbreak =\allowbreak 21$ pairs $\left\{ \alpha _{n},\beta
_{n}\right\} $ of families of polynomials and consequently at most $21$
identities. But we will not consider all these cases, since many of them
lead to some trivial identities. As the considerations above, state, the
crucial for obtaining these identities are finite expansions of the form (%
\ref{con1}). All mentioned-above families of polynomials from the AW scheme
are defined and their basic properties are described in \cite{AW85}, \cite%
{IA}, \cite{KLS}, \cite{Szab-bAW}, \cite{Szab22}.

The paper is organized as follows. In Section \ref{Jaco}, we introduce Beta
distribution, Jacobi polynomials and the Pochhammer symbol. This section
presents in a concise form connection coefficients between different
families of Jacobi polynomials (precisely $9$ sets) and also $8$ identities
involving Pochhammer polynomials of two variables.

In the next Section \ref{q-series}, we introduce basic notions used in the
so-called $q-$series theory including the $q-$Pochhammer symbol, assorted,
the so-called Askey-Wilson polynomials and we recall connection coefficients
between some families of AW polynomials. This section presents several
useful finite and infinite identities involving the $q-$Pochhammer symbol.
Longer, detailed proofs are moved to Section \ref{Dow}.

\section{Jacobi polynomials and the Pochhammer symbol.\label{Jaco}}

Let us recall the definition of Beta distribution. On one hand, we have the
distribution with the density:%
\begin{equation*}
f(x|a,b)=%
\begin{cases}
0\text{,} & \text{if }x\notin \lbrack 0,1]\text{;} \\ 
x^{a-1}(1-x)^{b-1}/B(a,b)\text{,} & \text{if }0\leq x\leq 1\text{,}%
\end{cases}%
\end{equation*}%
where $B(a,b)$ means the Euler's beta function, which is defined for all
complex $a,b$ such that $\func{Re}(a),\func{Re}(b)>0$.

On the other hand the following function is also called density of beta
distributions. It has the following density:%
\begin{equation*}
h(x|a,b)=%
\begin{cases}
(x+1)^{a-1}(1-x)^{b-1}/(B(a,b)2^{a+b-1})\text{,} & \text{if }\left\vert
x\right\vert \leq 1\text{;} \\ 
0\text{,} & \text{if otherwise.}%
\end{cases}%
\end{equation*}%
It is common knowledge (see, e.g., \cite{Andrews1999}) that the polynomials
that are orthogonal with respect to $h$ are the so-called Jacobi polynomials
defined by the formula:%
\begin{equation}
J_{n}(x|a,b)=\frac{1}{n!}\sum_{m=0}^{n}\binom{n}{m}\left( a+b+n-1\right)
^{\left( m\right) }(b+m)^{(n-m)}(x-1)^{m}/2^{m}.  \label{J1}
\end{equation}%
Following simple change of variables under the integral, we deduce that the
following family of polynomials:%
\begin{equation}
K_{n}(x|a,b)=\frac{1}{n!}\sum_{m=0}^{n}\binom{n}{m}\left( a+b+n-1\right)
^{\left( m\right) }(b+m)^{(n-m)}(x-1)^{m},  \label{Ja}
\end{equation}%
is orthogonal with respect to the distribution with the density $f$. Above,
we used, the so-called rising factorial or Pochhammer symbol (polynomial)
which is defined by 
\begin{equation*}
(x)^{(n)}=x(x+1)\ldots (x+n-1),
\end{equation*}%
for all complex $x$. Notice that we have for all $x\neq 0$ we have%
\begin{equation*}
(x)^{(n)}=\frac{\Gamma (x+n)}{\Gamma (x)}\text{,}
\end{equation*}%
where $\Gamma (x)$ denotes the Euler's gamma function. One can consider also
the so-called falling factorials, denoted by $\left( a\right) _{\left(
n\right) }$ and defined by 
\begin{equation*}
\left( a\right) _{\left( n\right) }\allowbreak =\allowbreak
\prod_{j=0}^{n-1}\left( a-j\right) ,
\end{equation*}%
with $\left( a\right) _{\left( 0\right) }\allowbreak =\allowbreak 1$. Let us
notice that we have 
\begin{equation*}
\left( a\right) ^{\left( n\right) }\allowbreak =\allowbreak \left( -1\right)
^{n}\left( -a\right) _{\left( n\right) }
\end{equation*}%
and of course 
\begin{equation*}
\left( a\right) _{\left( n\right) }\allowbreak =\allowbreak \left( -1\right)
^{n}\left( -a\right) ^{\left( n\right) }.
\end{equation*}

One has to remark that in many popular books on special functions or
orthogonal polynomials, like \cite{Andrews1999},\cite{IA},\cite{KLS} one
uses the notation $\left( a\right) _{n}$ to denote rising factorial. Our
notation is more intuitive. Besides, notice that within this paper indices
of Pochhammer symbols are always in the brackets. Further, $\left( a\right)
_{n}$ will denote something else in the Section \ref{q-series} of the paper.
Namely, it will denote the so-called $q-$Pochhammer symbol, when the the
so-called base is known. For the definition and details, see this section
below.

What is more, the falling factorials are closely connected with the
so-called Stirling numbers of the first and second kind. Namely, the
following expansions are true:%
\begin{eqnarray}
\left( x\right) _{\left( n\right) } &=&\sum_{j=0}^{n}\left( -1\right) ^{n-j}%
\QATOPD[ ] {n}{j}x^{j},  \label{S1} \\
\left( x\right) ^{\left( n\right) } &=&\sum_{j-0}^{n}\QATOPD[ ] {n}{j}x^{j},
\label{S!!} \\
x^{n} &=&\sum_{j=0}^{n}\QATOPD\{ \} {n}{j}\left( x\right) _{\left( j\right)
},  \label{S2}
\end{eqnarray}%
where $\left( -1\right) ^{n-j}\QATOPD[ ] {n}{j}$, $\QATOPD[ ] {n}{j}$, $%
\QATOPD\{ \} {n}{j}$ are called respectively Stirling numbers of the first
kind, unsigned Stirling numbers of the first kind and Stirling numbers of
the second kind. These numbers are very important in combinatorics. They
count, e.g., the number of permutations with disjoined $j$ cycles as the
Stirling numbers of the first kind do, or are closely related to another
families of numbers like Bell or Bernoulli like the Stirling numbers of the
second kind. Symbols $\QATOPD[ ] {n}{j}$ appear only here, and shouldn't be
confused with the symbol $\QATOPD[ ] {n}{j}_{q}$ which means something
different and will be defined and used extensively in the next section.

Let us also remark that for $x$ of the form $x\allowbreak =\allowbreak i/2$,
where $i$ is some integer we have 
\begin{equation*}
\left( i\right) ^{\left( n\right) }=\frac{\left( i+n-1\right) !}{(i-1)!}%
,~(x+1)^{\left( n\right) }=\frac{x+n}{x}\left( x\right) ^{\left( n\right) }%
\text{ and }\left( \frac{1}{2}\right) ^{\left( n\right) }=\frac{(2n-1)!!}{%
2^{n}}.
\end{equation*}

Following formula (4.1.5) of \cite{IA} we deduce that polynomials $\left\{
K_{n}\right\} $ and $\left\{ J_{n}\right\} $ are not monic. The coefficient
by $x^{n}$ in $J_{n}$ is equal to%
\begin{equation}
\frac{\left( a+b+n-1\right) ^{\left( n\right) }}{n!2^{n}}.  \label{lead}
\end{equation}

It is also known that 
\begin{equation*}
\int_{-1}^{1}J_{n}^{2}(x|a,b)h\left( x|a,b\right) dx=\frac{\left( a\right)
^{\left( n\right) }\left( b\right) ^{\left( n\right) }}{n!\left(
a+b+2n-1\right) \left( a+b\right) ^{\left( n-1\right) }}.
\end{equation*}

Let us remark that the particular cases of Jacobi polynomials are the
following.

1. The Chebyshev polynomials of the first kind for $a\allowbreak
=\allowbreak b\allowbreak =\allowbreak 1/2$ are orthogonal with respect to
the so-called arcsine distribution that has the following density%
\begin{equation*}
h\left( x|1/2,1/2\right) =%
\begin{cases}
\frac{1}{\pi \sqrt{1-x^{2}}}\text{,} & \text{if }\left\vert x\right\vert <1%
\text{;} \\ 
0\text{,} & \text{otherwise .}%
\end{cases}%
\end{equation*}%
The traditional denotation for the Chebyshev polynomials of the first kind
is $T_{n}(x)$. It has the leading coefficient equal to $2^{n-1}$. Hence,
following (\ref{lead}), we see that for $n\geq 1$ we have 
\begin{equation*}
T_{n}(x)=J_{n}(x|1/2,1,2)\frac{2^{2n-1}n!n!}{(2n)!}.
\end{equation*}%
Polynomials $\left\{ T_{n}\right\} $ satisfy the following three-term
recurrence 
\begin{equation}
2xT_{n}(x)=T_{n+1}(x)+T_{n-1}(x),  \label{3Ch}
\end{equation}%
with $T_{0}(x)=1$, $T_{1}(x)=x.$

2. The Chebyshev polynomials of the second kind are defined as polynomials
orthogonal with respect to the semicircle (or Wigner) distribution that has
the following density:%
\begin{equation*}
h(x|3/2,3/2)=%
\begin{cases}
\frac{2}{\pi }\sqrt{1-x^{2}}\text{,} & \text{if }\left\vert x\right\vert <1%
\text{;} \\ 
0\text{,} & \text{if otherwise.}%
\end{cases}%
\end{equation*}%
Again following (\ref{lead}) we deduce that polynomials $K_{n}(x|3/2,3/2)$
are related to the Chebyshev polynomials of the second kind traditionally
denoted by $\left\{ U_{n}\right\} $ and satisfying (\ref{3Ch}) with $%
U_{0}(x)\allowbreak =\allowbreak 1$ and $U_{1}(x)=2x$ in the following way:%
\begin{equation*}
U_{n}(x)=J_{n}(x|3/2,3/2)\frac{2^{2n}n!(n+1)!}{(2n+1)!}.
\end{equation*}

3. The Legendre polynomials $\left\{ P_{n}(x)\right\} $ is the traditional
name for the polynomials that are orthogonal with respect to the measure
with the density equal to $1/2$ on $[-1,1]$ and $0$ otherwise, that is with
respect to $h\left( x|1,1\right) $. It turns out that in this case we have%
\begin{equation*}
P_{n}(x)=J_{n}(x|1,1).
\end{equation*}

4. The Gegenbauer or ultraspherical polynomials $\left\{ C_{n}(x|\lambda
)\right\} _{n\geq 0}$, for $\lambda >-1/2$ are another special case of
Jacobi polynomials, namely we have%
\begin{equation*}
C_{n}(x|\lambda )=\frac{\left( 2\lambda \right) ^{\left( n\right) }}{\left(
\lambda +1/2\right) ^{n}}J_{n}(x|\lambda +1/2,\lambda +1/2).
\end{equation*}

Let us note that we have also the following relationship between even and
odd polynomials orthogonal with respect to symmetric distribution and
non-symmetric distribution.

\begin{lemma}
\label{even}For all $n\geq 0$ and positive $a$ and $b$ we have%
\begin{eqnarray*}
J_{2n}(x|a,a) &=&\frac{n!\left( a+n\right) ^{\left( n\right) }}{(2n)!}%
J_{n}(2x^{2}-1|1/2,a), \\
J_{2n+1}(x|a,a) &=&\frac{n!(a+n)^{(n+1)}}{(2n+1)!}xJ_{n}(2x^{2}-1|3/2,a).
\end{eqnarray*}
\end{lemma}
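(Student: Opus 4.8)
The plan is to exploit the symmetry of the weight and reduce each identity to a statement about a single Jacobi family in the variable $t=2x^{2}-1$, fixing the multiplicative constant afterwards by comparing leading coefficients.

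First I would record that $h(x|a,a)$ is proportional to $(1-x^{2})^{a-1}$ on $[-1,1]$ and is therefore an even weight. Consequently $J_{2n}(x|a,a)$ is an even polynomial and $J_{2n+1}(x|a,a)$ is an odd one, so I may write $J_{2n}(x|a,a)=Q_{n}(x^{2})$ and $J_{2n+1}(x|a,a)=xR_{n}(x^{2})$ with $\deg Q_{n}=\deg R_{n}=n$. Using the orthogonality of $\{J_{k}(\cdot|a,a)\}$ against the lower-degree monomials together with the substitution $u=x^{2}$ (so that $dx=du/(2\sqrt{u})$ and the integral over $[-1,1]$ becomes twice that over $[0,1]$), the vanishing integrals $\int_{-1}^{1}J_{2n}x^{2m}(1-x^{2})^{a-1}dx=0$ and $\int_{-1}^{1}J_{2n+1}x^{2m+1}(1-x^{2})^{a-1}dx=0$ for $0\le m<n$ become
\[
\int_{0}^{1}Q_{n}(u)\,u^{m-1/2}(1-u)^{a-1}\,du=0,\qquad \int_{0}^{1}R_{n}(u)\,u^{m+1/2}(1-u)^{a-1}\,du=0 .
\]
Thus $Q_{n}$ is orthogonal on $[0,1]$ for the weight $u^{-1/2}(1-u)^{a-1}$ and $R_{n}$ for $u^{1/2}(1-u)^{a-1}$. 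Setting $t=2u-1$ sends these weights, up to constants, to $(1+t)^{-1/2}(1-t)^{a-1}$ and $(1+t)^{1/2}(1-t)^{a-1}$, i.e. to the weights $h(\cdot|1/2,a)$ and $h(\cdot|3/2,a)$. By uniqueness of orthogonal polynomials up to a scalar, $Q_{n}(u)=c_{n}J_{n}(2u-1|1/2,a)$ and $R_{n}(u)=d_{n}J_{n}(2u-1|3/2,a)$, which already yields both claimed identities modulo the constants $c_{n},d_{n}$.

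It then remains to pin down $c_{n}$ and $d_{n}$ by matching leading coefficients. Using (\ref{lead}), the coefficient of $x^{2n}$ in $J_{2n}(x|a,a)$ is $(2a+2n-1)^{(2n)}/((2n)!\,2^{2n})$, while the coefficient of $u^{n}$ in $J_{n}(2u-1|1/2,a)$ is $2^{n}$ times the leading $t$-coefficient, namely $(a+n-1/2)^{(n)}/n!$; likewise in the odd case one compares the coefficient of $x^{2n+1}$, equal to $(2a+2n)^{(2n+1)}/((2n+1)!\,2^{2n+1})$, with $(a+n+1/2)^{(n)}/n!$. Equating these reduces the problem to the two Pochhammer evaluations
\[
(2a+2n-1)^{(2n)}=2^{2n}(a+n-1/2)^{(n)}(a+n)^{(n)},\qquad (2a+2n)^{(2n+1)}=2^{2n+1}(a+n)^{(n+1)}(a+n+1/2)^{(n)},
\]
which I would establish by rewriting each Pochhammer symbol as a ratio of Gamma values and applying the Legendre duplication formula $\Gamma(2z)=2^{2z-1}\Gamma(z)\Gamma(z+1/2)/\sqrt{\pi}$. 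Substituting these back gives precisely $c_{n}=n!\,(a+n)^{(n)}/(2n)!$ and $d_{n}=n!\,(a+n)^{(n+1)}/(2n+1)!$, as asserted.

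The only genuinely delicate point is the bookkeeping in this final step: keeping track of the factors of $2^{n}$ introduced by the affine change $t=2u-1$, and correctly splitting the length-$2n$ (respectively length-$2n+1$) rising factorial into two half-integer-shifted factors via the duplication formula. Everything else — the parity argument, the change of variables, and the appeal to uniqueness of orthogonal polynomials — is routine.
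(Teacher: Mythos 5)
Your argument is correct, but it is worth noting that the paper does not actually prove Lemma \ref{even}: it simply declares the two identities ``common knowledge'' and points to \cite{KLS} (end of p.~222) and Wolfram MathWorld. So your proposal supplies a genuine, self-contained derivation where the paper offers only a citation. The route you take is the classical quadratic-transformation argument, and every step checks out against the paper's conventions: $h(x|a,a)\propto(1-x^2)^{a-1}$ is even, so $J_{2n}(x|a,a)=Q_n(x^2)$ and $J_{2n+1}(x|a,a)=xR_n(x^2)$ (this parity also follows instantly from the paper's own (\ref{KK}) with $b=a$); the substitutions $u=x^2$, $t=2u-1$ turn the respective weights into $(1+t)^{-1/2}(1-t)^{a-1}$ and $(1+t)^{1/2}(1-t)^{a-1}$, which in the paper's parametrization of $h(\cdot|a,b)$ are exactly $h(\cdot|1/2,a)$ and $h(\cdot|3/2,a)$; and the constants come out right from (\ref{lead}) once one has $(2a+2n-1)^{(2n)}=2^{2n}\,(a+n-1/2)^{(n)}(a+n)^{(n)}$ and $(2a+2n)^{(2n+1)}=2^{2n+1}\,(a+n)^{(n+1)}(a+n+1/2)^{(n)}$. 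For these two evaluations you do not even need the duplication formula for $\Gamma$: splitting the rising factorial $\prod_{j}(2a+2n-1+j)$ into the even-indexed and odd-indexed factors and pulling out a $2$ from each term gives them directly, which is slightly cleaner and avoids any Gamma-function analyticity considerations. The only small points you should make explicit are that $Q_n$ and $R_n$ really have degree $n$ (their leading coefficients are nonzero by (\ref{lead}) since $a>0$) and that the weights $u^{\mp 1/2}(1-u)^{a-1}$ are integrable on $[0,1]$ for $a>0$, so the uniqueness-up-to-scalar argument applies; both are immediate.
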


\begin{proof}
This is common knowledge. See, e.g., Wolfram MathWorld or unnumbered
formulae at the end of page 222 of \cite{KLS}.
\end{proof}

It is well known that for all $n\geq 0$ and complex $x$ and $y$ we have 
\begin{eqnarray*}
\left( x+y\right) ^{\left( n\right) } &=&\sum_{k=0}^{n}\binom{n}{k}\left(
x\right) ^{\left( k\right) }\left( y\right) ^{\left( n-k\right) }, \\
\left( x+y\right) _{\left( n\right) } &=&\sum_{k=0}^{n}\binom{n}{k}\left(
x\right) _{\left( k\right) }\left( y\right) _{\left( n-k\right) }.
\end{eqnarray*}

In order to proceed further, we need the following simple result.`

\begin{lemma}
\label{a-b}For all complex $a$, $b$ and integer $n$ we have 
\begin{eqnarray}
\sum_{j=0}^{n}(-1)^{j}\binom{n}{j}\left( a\right) ^{\left( j\right) }\left(
b+j\right) ^{\left( n-j\right) } &=&\left( b-a\right) ^{(n)},  \label{rozn}
\\
\sum_{j=0}^{n}\left( -1\right) ^{n-j}\binom{n}{j}\left( b+n-1\right)
^{\left( j\right) }\left( a+j\right) ^{\left( n-j\right) } &=&\left(
b-a\right) ^{\left( n\right) },  \label{rozn2} \\
\sum_{j=0}^{n}\left( -1\right) ^{j}\binom{n}{j}\left( b\right) ^{\left(
n-j\right) }\left( a+1-j\right) ^{\left( j\right) } &=&\left( b-a\right)
^{\left( n\right) }.  \label{rozn3}
\end{eqnarray}
\end{lemma}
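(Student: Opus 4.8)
The plan is to reduce all three identities to the single rising--factorial Vandermonde convolution $(x+y)^{(n)}=\sum_{k=0}^{n}\binom{n}{k}(x)^{(k)}(y)^{(n-k)}$ recalled immediately before the lemma. The only auxiliary inputs are two elementary rewriting rules: the reflection formula $(x)^{(n)}=(-1)^{n}(-x)_{(n)}$ (equivalently $(-1)^{j}(x)^{(j)}=(-x)_{(j)}$), already noted in the text, and the product--reversal identity $(x+n-1)_{(n)}=(x)^{(n)}$, which holds simply because the two sides are the same product written in opposite order. Every one of \eqref{rozn}, \eqref{rozn2}, \eqref{rozn3} will turn out to be a disguised instance of Vandermonde, and the only real work is the bookkeeping needed to recast the shifted Pochhammer factors into the clean fixed--base form the convolution requires.

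For \eqref{rozn3} the reduction is immediate. First I would observe that $(a+1-j)^{(j)}=(a+1-j)(a+2-j)\cdots a=(a)_{(j)}$, whence $(-1)^{j}(a+1-j)^{(j)}=(-1)^{j}(a)_{(j)}=(-a)^{(j)}$ by reflection. The left--hand side then becomes $\sum_{j=0}^{n}\binom{n}{j}(-a)^{(j)}(b)^{(n-j)}$, and Vandermonde with $x=-a$, $y=b$ gives $(b-a)^{(n)}$ at once. For \eqref{rozn2} I would first reverse the order of summation, setting $i=n-j$; using $\binom{n}{n-i}=\binom{n}{i}$ this rewrites the sum as $\sum_{i=0}^{n}(-1)^{i}\binom{n}{i}(b+n-1)^{(n-i)}(a+n-i)^{(i)}$. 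The factor $(a+n-i)^{(i)}=(a+n-i)\cdots(a+n-1)$ is, read backwards, the falling factorial $(a+n-1)_{(i)}$, so $(-1)^{i}(a+n-i)^{(i)}=(1-a-n)^{(i)}$. Vandermonde with $x=1-a-n$, $y=b+n-1$ then produces $\bigl((1-a-n)+(b+n-1)\bigr)^{(n)}=(b-a)^{(n)}$.

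Identity \eqref{rozn} is the one requiring the most care, since the shifted factor $(b+j)^{(n-j)}$ resists a direct split. Here the plan is to write $(b+j)^{(n-j)}=(-1)^{n-j}(-b-j)_{(n-j)}$, and then to notice that $(-b-j)_{(n-j)}=(-b-j)(-b-j-1)\cdots(-b-n+1)$ is, read backwards, the rising factorial $(1-b-n)^{(n-j)}$ with a base \emph{independent of} $j$. After collecting signs this turns the left--hand side into $(-1)^{n}\sum_{j=0}^{n}\binom{n}{j}(a)^{(j)}(1-b-n)^{(n-j)}$, to which Vandermonde ($x=a$, $y=1-b-n$) applies and yields $(-1)^{n}(a+1-b-n)^{(n)}$. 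The final move is to recognize this as $(b-a)^{(n)}$: by reflection $(-1)^{n}(a+1-b-n)^{(n)}=(b+n-1-a)_{(n)}$, and the product--reversal identity $(b+n-1-a)_{(n)}=(b-a)^{(n)}$ (the case $x=b-a$ of $(x+n-1)_{(n)}=(x)^{(n)}$) closes the argument.

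I expect the main obstacle to be purely the sign-- and index--bookkeeping rather than any conceptual difficulty: the delicate points are correctly reversing the order of summation, converting each $j$-dependent Pochhammer base into a fixed base, and tracking the accumulated factors of $(-1)$ so that the concluding reflection and reversal land exactly on $(b-a)^{(n)}$. Since all three sides are polynomials in $a$ and $b$, one could alternatively verify the reduced identities on sufficiently many integer values and invoke polynomial identity, but the Vandermonde route above is cleaner and keeps the proof self--contained within the formulae already stated.
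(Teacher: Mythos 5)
Your proof is correct, and it takes a genuinely different route from the paper's. The paper proves \eqref{rozn} and \eqref{rozn3} analytically, by multiplying the left-hand sides by $t^{n}/n!$, summing over $n$, and recognizing the resulting double sum as a product of the exponential generating functions $\sum_{n}\frac{t^{n}}{n!}(a)^{(n)}=(1-t)^{-a}$ (for \eqref{rozn} this requires the extra substitution $t\mapsto t/(1-t)$ to absorb the $j$-dependence of the base in $(b+j)^{(n-j)}$), and then reads off $(b-a)^{(n)}$ as the coefficient of $t^{n}/n!$ in $(1-t)^{a-b}$; identity \eqref{rozn2} is obtained, exactly as in your argument, by reversing the summation index and reducing to \eqref{rozn3}. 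You instead reduce all three identities purely algebraically to the rising-factorial Vandermonde convolution $(x+y)^{(n)}=\sum_{k}\binom{n}{k}(x)^{(k)}(y)^{(n-k)}$ displayed immediately before the lemma, using only the reflection rule $(x)^{(n)}=(-1)^{n}(-x)_{(n)}$ and product reversal $(x+n-1)_{(n)}=(x)^{(n)}$; in particular your conversion $(b+j)^{(n-j)}=(-1)^{n-j}(1-b-n)^{(n-j)}$ is the finite, coefficient-level counterpart of the paper's analytic substitution $t\mapsto t/(1-t)$, and all your sign and index bookkeeping checks out (e.g.\ $(-1)^{n}(a+1-b-n)^{(n)}=(b-a)^{(n)}$ is verified correctly at the end of \eqref{rozn}). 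What each approach buys: yours is entirely finite and self-contained given the stated convolution, with no convergence or formal-power-series considerations and an immediate validity for all complex $a,b$; the paper's generating-function method is slightly heavier but illustrates the general machinery that recurs throughout the paper and does not presuppose the Vandermonde identity.
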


\begin{proof}
Notice also that $\left( -1\right) ^{n}\left( a+1-n\right) ^{\left( n\right)
}\allowbreak =\allowbreak \left( -a\right) ^{\left( n\right) }$. We have the
following binomial formula true for $\left\vert t\right\vert <1$ 
\begin{equation*}
\sum_{n\geq 0}\frac{t^{n}}{n!}\left( a\right) _{\left( n\right) }=\left(
1+t\right) ^{a},
\end{equation*}%
which after small modification becomes%
\begin{equation*}
\sum_{n\geq 0}\frac{t^{n}}{n!}\left( a\right) ^{\left( n\right) }=\left(
1-t\right) ^{-a}.
\end{equation*}%
Hence, we have in case of (\ref{rozn3}) 
\begin{eqnarray*}
\sum_{n=0}^{\infty }\frac{t^{n}}{n!}\sum_{j=0}^{n}\left( -1\right) ^{j}%
\binom{n}{j}\left( b\right) ^{\left( n-j\right) }\left( a+1-j\right)
^{\left( j\right) } &=& \\
\sum_{j=0}^{\infty }\frac{t^{j}}{j!}(-1)^{j}\left( a+1-j\right) ^{\left(
j\right) }\sum_{n=j}^{\infty }\frac{t^{n-j}}{(n-j)!}\left( b\right) ^{\left(
n-j\right) } &=&\left( 1-t\right) ^{b}\left( 1-t\right) ^{-a}.
\end{eqnarray*}

In case of (\ref{rozn2}), we have%
\begin{eqnarray*}
&&\sum_{j=0}^{n}\left( -1\right) ^{n-j}\binom{n}{j}\left( b+n-1\right)
^{\left( j\right) }\left( a+j\right) ^{\left( n-j\right) } \\
&=&\sum_{s=0}^{n}\left( -1\right) ^{s}\binom{n}{s}\left( b+n-1\right)
^{\left( n-s\right) }\left( a+n-s\right) ^{\left( s\right) }.
\end{eqnarray*}%
We denote $x\allowbreak =\allowbreak a+n-1$, $y\allowbreak =\allowbreak
b+n-1 $. Notice that $y-x\allowbreak =\allowbreak b-a$ . Now we apply \ref%
{rozn3}.

To get (\ref{rozn}) we proceed as follows: 
\begin{gather*}
\sum_{n\geq 0}\frac{t^{n}}{n!}\sum_{j=0}^{n}(-1)^{j}\binom{n}{j}\left(
a\right) ^{\left( j\right) }\left( b+j\right) ^{\left( n-j\right)
}=\sum_{j\geq 0}\frac{(-t)^{j}}{j!}\left( a\right) ^{j}\sum_{n\geq j}\frac{%
t^{n-j}\left( b+j\right) ^{\left( n-j\right) }}{\left( n-j\right) !} \\
\sum_{j\geq 0}\frac{(-t)^{j}}{j!}\left( a\right) ^{\left( j\right)
}(1-t)^{-b-j}=\left( 1-t\right) ^{-b}\sum_{j\geq 0}\frac{(-t/(1-t))^{j}}{j!}%
\left( a\right) ^{\left( j\right) } \\
=\left( 1-t\right) ^{-b}\left( 1+\frac{t}{1-t}\right) ^{a}=\left( 1-t\right)
^{a-b}=\sum_{n\geq 0}\frac{t^{n}}{n!}\left( b-a\right) ^{\left( n\right) }.
\end{gather*}

By the way, from the so called Chu-Vandermonde identity, it follows that 
\begin{equation*}
\sum_{j=0}^{n}(-1)^{j}\frac{\left( a\right) ^{\left( j\right) }}{\left(
b\right) ^{\left( j\right) }}=\frac{\left( b-a\right) ^{\left( n\right) }}{%
\left( b\right) ^{\left( n\right) }},
\end{equation*}%
hence by multiplying both sides of this identity by $\left( b\right)
^{\left( n\right) }$ we get immediately (\ref{rozn}).
\end{proof}

Let us denote by 
\begin{eqnarray}
e_{n,m}(a,b) &=&\binom{n}{m}\left( a+b+n-1\right) ^{\left( m\right)
}(b+m)^{(n-m)}/n!,  \label{cnj} \\
\tilde{e}_{n,m}(a,b) &=&(-1)^{n-m}\frac{n!\left( b+m\right) ^{\left(
n-m\right) }}{(n-m)!\left( a+b+m-1\right) ^{\left( m\right) }\left(
a+b+2m\right) ^{\left( n-m\right) }}  \label{dnj} \\
&=&(-1)^{n-m}\frac{n!\left( b+m\right) ^{\left( n-m\right) }\left(
a+b+2m-1\right) }{(n-m)!\left( a+b+m-1\right) ^{\left( n+1\right) }}.
\label{dnj2}
\end{eqnarray}

The equality of (\ref{dnj}) and (\ref{dnj2}) follows the following trivial
identity 
\begin{equation*}
\left( x\right) ^{\left( j\right) }\left( x+j+1\right) ^{\left( n-j\right) }=%
\frac{\left( x\right) ^{\left( n+1\right) }}{\left( x+j\right) }.
\end{equation*}

\begin{lemma}
\label{odwr}For all $n$, $m\leq n$ and complex $a$, $b$ such that $\func{Re}%
(a),\func{Re}\left( b\right) >0$, we have%
\begin{equation*}
\sum_{k=m}^{n}e_{n,k}(a,b)\tilde{e}_{k,m}\left( a,b\right) =%
\begin{cases}
1\text{,} & \text{if }n=m\text{;} \\ 
0\text{,} & \text{if }0\leq m<n\text{.}%
\end{cases}%
\end{equation*}
\end{lemma}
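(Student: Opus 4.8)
The plan is to substitute the explicit forms (\ref{cnj}) and (\ref{dnj}) directly into the sum and reduce it to a single terminating hypergeometric sum that evaluates in closed form. First I would insert the two definitions and set $j=k-m$ together with $N=n-m$, so that the summation over $k\in\{m,\dots,n\}$ becomes a summation over $j\in\{0,\dots,N\}$. The product of the combinatorial prefactors collapses nicely: $\binom{n}{k}k!/n!=1/(n-k)!$, and combined with the $1/(k-m)!=1/j!$ coming from $\tilde{e}_{k,m}$ this yields a factor $\binom{N}{j}/N!$. At this stage the summand carries the sign $(-1)^{j}$, the four Pochhammer symbols $(a+b+n-1)^{(m+j)}$, $(b+m+j)^{(N-j)}$, $(b+m)^{(j)}$, $(a+b+2m)^{(j)}$, and the $j$-independent factor $1/(a+b+m-1)^{(m)}$.

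The next step is to peel off everything that does not depend on $j$. The two symbols involving $b$ telescope, since $(b+m)^{(j)}(b+m+j)^{(N-j)}=(b+m)^{(N)}$, so the $b$-dependence on $j$ disappears entirely. Likewise, writing $n-1=m+N-1$ and applying $(x)^{(m+j)}=(x)^{(m)}(x+m)^{(j)}$ with $x=a+b+m+N-1$, the symbol $(a+b+n-1)^{(m+j)}$ splits into a constant piece $(a+b+m+N-1)^{(m)}$ and a running piece $(a+b+2m+N-1)^{(j)}$. After pulling out all constants, the entire problem reduces to evaluating
\begin{equation*}
T=\sum_{j=0}^{N}(-1)^{j}\binom{N}{j}\frac{(a+b+2m+N-1)^{(j)}}{(a+b+2m)^{(j)}}.
\end{equation*}

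The crux is to recognize $T$ as a terminating Gauss sum. Since $(-1)^{j}\binom{N}{j}=(-N)^{(j)}/j!$, one has $T={}_{2}F_{1}(-N,\,a+b+2m+N-1;\,a+b+2m;\,1)$, and the Chu--Vandermonde evaluation ${}_{2}F_{1}(-N,\beta;\gamma;1)=(\gamma-\beta)^{(N)}/(\gamma)^{(N)}$ gives $T=(1-N)^{(N)}/(a+b+2m)^{(N)}$. The numerator $(1-N)^{(N)}=(1-N)(2-N)\cdots 0$ contains a zero factor exactly when $N\geq 1$, so $T$ vanishes whenever $n>m$; for $N=0$ the sum is empty, $T=1$, and one checks immediately that the accumulated constant prefactor reduces to $(a+b+m-1)^{(m)}/(a+b+m-1)^{(m)}=1$. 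This produces the asserted Kronecker delta. The main obstacle is purely the bookkeeping: one must keep the four Pochhammer symbols correctly aligned through the telescoping and through the $(x)^{(m+j)}$-splitting so that the intended $j$-independent factors cancel against $1/(a+b+m-1)^{(m)}$; once the sum is brought to the form $T$, the evaluation is immediate. I note that $T$ is of exactly the Chu--Vandermonde type underlying Lemma \ref{a-b}, so the same elementary machinery as in (\ref{rozn})--(\ref{rozn3}) suffices and no new tool is needed beyond what the paper already develops.
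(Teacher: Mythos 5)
Your proposal is correct and follows essentially the same route as the paper's proof: the same reindexing $s=k-m$, the same telescoping of the two $b$-Pochhammer symbols, and the same splitting $(a+b+n-1)^{(m+j)}=(a+b+n-1)^{(m)}(a+b+n-1+m)^{(j)}$, reducing everything to a single terminating sum killed by a factor $(1-N)^{(N)}$. The only cosmetic difference is at the last step, where you quote Chu--Vandermonde for $T={}_{2}F_{1}(-N,\beta;\gamma;1)$ directly, while the paper first moves $(a+b+2m)^{(j)}$ from the denominator to the numerator and then invokes its Lemma~\ref{a-b}, which is the same identity in disguise.
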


\begin{proof}
We have%
\begin{gather*}
\sum_{k=m}^{n}e_{n,k}(a,b)\tilde{e}_{k,m}\left( a,b\right) =\sum_{k=m}^{n}%
\frac{\left( a+b+n-1\right) ^{\left( k\right) }}{k!}\frac{(b+k)^{(n-k)}}{%
(n-k)!} \\
\times (-1)^{k-m}\frac{k!\left( b+m\right) ^{\left( k-m\right) }}{%
(k-m)!\left( a+b+m-1\right) ^{\left( m\right) }\left( a+b+2m\right) ^{\left(
k-m\right) }} \\
=\frac{1}{\left( a+b+m-1\right) ^{\left( m\right) }}\sum_{k=m}^{n}\frac{%
\left( a+b+n-1\right) ^{\left( k\right) }(b+k)^{(n-k)}}{(n-k)!} \\
\times (-1)^{k-m}\frac{\left( b+m\right) ^{\left( k-m\right) }}{(k-m)!\left(
a+b+2m\right) ^{\left( k-m\right) }}
\end{gather*}%
\begin{gather*}
=\frac{1}{(n-m)!\left( a+b+m-1\right) ^{\left( m\right) }} \\
\times \sum_{s=0}^{n-m}\left( -1\right) ^{s}\binom{n-m}{s}\frac{\left(
a+b+n-1\right) ^{\left( s+m\right) }(b+m+s)^{(n-m-s)}\left( b+m\right)
^{\left( s\right) }}{\left( a+b+2m\right) ^{\left( s\right) }} \\
=\frac{\left( b+m\right) ^{\left( n-m\right) }\left( a+b+n-1\right) ^{\left(
m\right) }}{(n-m)!\left( a+b+m-1\right) ^{\left( m\right) }}%
\sum_{s=0}^{n-m}\left( -1\right) ^{s}\binom{n-m}{s}\frac{\left(
a+b+n-1+m\right) ^{(s)}}{\left( a+b+2m\right) ^{\left( s\right) }} \\
=\frac{\left( b+m\right) ^{\left( n-m\right) }\left( a+b+n-1\right) ^{\left(
m\right) }}{(n-m)!\left( a+b+m-1\right) ^{\left( m\right) }\left(
a+b+2m\right) ^{\left( n-m\right) }} \\
\times \sum_{s=0}^{n-m}\left( -1\right) ^{s}\binom{n-m}{s}\left(
a+b+n-m-1+2m\right) ^{(s)}\left( a+b+2m+s\right) ^{\left( n-m-s\right) } \\
=\frac{\left( b+m\right) ^{\left( n-m\right) }\left( a+b+n-1\right) ^{\left(
m\right) }}{(n-m)!\left( a+b+m-1\right) ^{\left( m\right) }\left(
a+b+2m\right) ^{\left( n-m\right) }}\left( -n+m+1\right) ^{\left( n-m\right)
}.
\end{gather*}

Now, recall that $\left( a\right) ^{\left( n\right) }\allowbreak
=\allowbreak 0$ when $n>1$ and $-a\allowbreak =\allowbreak 0,1,2,\ldots $ , $%
a$ when $n\allowbreak =\allowbreak 1$ and finally $1$ when $n\allowbreak
=\allowbreak 0.$
\end{proof}

Notice that we have just proved that :%
\begin{eqnarray*}
J_{n}(x|a,b) &=&\sum_{m=0}^{n}e_{n,m}(a,b)(x-1)^{m}/2^{m}, \\
\left( x-1\right) ^{n}/2^{n} &=&\sum_{m=0}^{n}\tilde{e}%
_{n,m}(a,b)J_{m}(x|a,b).
\end{eqnarray*}

\begin{remark}
As mentioned above the assertion of the Lemma \ref{odwr} is proven in \cite%
{IA} (section 4.2) but with a slightly different notation and argumentation.
\end{remark}

As an immediate corollary from this result we have the following observation:

\begin{proposition}
\label{elem}Let $\left\{ J_{n}(x|a,b\right\} $ and $\left\{
J_{n}(x|c,d)\right\} $ be two families of Jacobi polynomials defined by (\ref%
{Ja}) with parameters respectively $a,b$ and $c,d$. Then for all $n\geq 0$
we have:%
\begin{equation}
J_{n}(x|a,b)=\sum_{j=0}^{n}c_{n,j}(a,b;c,d)J_{j}(x|c,d),  \label{conn1}
\end{equation}%
where 
\begin{equation}
c_{n,j}(a,b;c,d)=\sum_{k=j}^{n}e_{n,k}(a,b)\tilde{e}_{k,j}(c,d),
\label{conn2}
\end{equation}%
where $j\allowbreak =\allowbreak 0,\ldots ,n$ and coefficients $e_{n,j}$ and 
$\tilde{e}_{n,j}$ are given by (\ref{cnj}) and (\ref{dnj}).

For all complex $a,b,c,d,e,f$, $n\geq j\geq 0$ we have%
\begin{eqnarray}
\sum_{k=m}^{n}\tilde{e}_{n,k}(a,b)e_{k,m}\left( a,b\right) &=&%
\begin{cases}
1\text{,} & \text{if }n=m\text{;} \\ 
0\text{,} & \text{if }0\leq m<n\text{.}%
\end{cases}
\label{odw2} \\
c_{n,j}\left( a,b;c,d\right) &=&\sum_{k=j}^{n}c_{n,k}\left( a,b;e,f\right)
c_{k,j}\left( e,f;c,d\right) ,  \label{conn3} \\
\sum_{k=j}^{n}c_{n,k}\left( a,b;c,d\right) c_{k,j}\left( c,d;a,b\right) &=&%
\begin{cases}
0, & \text{if }n>j\text{,} \\ 
1, & \text{if }n=j\text{.}%
\end{cases}
\label{odwrr} \\
c_{n,j}\left( a,a;b,b\right) &=&0\text{ if }n-j\text{ is odd.}  \label{conn4}
\\
c_{n,j}(a,b;c,d) &=&\left( -1\right) ^{n-j}c_{n,j}(b,a;d,c).  \label{odwrkol}
\end{eqnarray}
\end{proposition}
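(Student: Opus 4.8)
The plan is to read every identity off a single matrix factorization. Writing the defining formula (\ref{Ja}) as
$$J_{n}(x|a,b)=\sum_{m=0}^{n}e_{n,m}(a,b)(x-1)^{m},$$
we see that the infinite lower-triangular matrix $E(a,b):=(e_{n,m}(a,b))_{0\le m\le n}$ is the change-of-basis matrix from the Jacobi basis $\{J_{n}(\cdot|a,b)\}$ to the monomial basis $\{(x-1)^{m}\}$. Lemma \ref{odwr} asserts $\sum_{k=m}^{n}e_{n,k}(a,b)\tilde{e}_{k,m}(a,b)=\delta_{n,m}$, i.e.\ $E(a,b)\tilde{E}(a,b)=I$ with $\tilde{E}(a,b):=(\tilde{e}_{n,m}(a,b))$. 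Since, in the sense of the paper, each truncation is a square $(N+1)\times(N+1)$ matrix, a right inverse is automatically two-sided; this gives $\tilde{E}(a,b)=E(a,b)^{-1}$, so reading the product in the opposite order yields (\ref{odw2}), and in particular the inverse expansion $(x-1)^{n}=\sum_{m=0}^{n}\tilde{e}_{n,m}(a,b)J_{m}(x|a,b)$ holds.

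First I would prove (\ref{conn1})--(\ref{conn2}): substituting this inverse expansion for $(x-1)^{k}$ with parameters $(c,d)$ into $J_{n}(x|a,b)=\sum_{k}e_{n,k}(a,b)(x-1)^{k}$ and interchanging the two finite sums gives $J_{n}(x|a,b)=\sum_{j=0}^{n}\big(\sum_{k=j}^{n}e_{n,k}(a,b)\tilde{e}_{k,j}(c,d)\big)J_{j}(x|c,d)$, which is exactly (\ref{conn1}) with coefficient (\ref{conn2}). Writing $C(a,b;c,d)$ for the matrix $(c_{n,j}(a,b;c,d))$, this says $C(a,b;c,d)=E(a,b)\tilde{E}(c,d)=E(a,b)E(c,d)^{-1}$. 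The composition law (\ref{conn3}) and the inversion (\ref{odwrr}) are then immediate from this factorization: $C(a,b;e,f)C(e,f;c,d)=E(a,b)E(e,f)^{-1}E(e,f)E(c,d)^{-1}=C(a,b;c,d)$, and $C(a,b;c,d)C(c,d;a,b)=E(a,b)E(a,b)^{-1}=I$; reading off the $(n,j)$ entries gives the stated sums.

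The remaining two identities come from the symmetry of the weight $h$. Since $h(-x|a,b)=h(x|b,a)$, the family $\{J_{n}(-x|a,b)\}_{n}$ is orthogonal with respect to $h(\cdot|b,a)$; comparing leading coefficients via (\ref{lead}), which is symmetric in $a,b$, pins the proportionality constant and yields the reflection rule $J_{n}(-x|a,b)=(-1)^{n}J_{n}(x|b,a)$. Applying $x\mapsto -x$ to (\ref{conn1}) and using this rule on both sides gives $J_{n}(x|b,a)=\sum_{j=0}^{n}(-1)^{n-j}c_{n,j}(a,b;c,d)J_{j}(x|d,c)$; matching coefficients against the definition of $c_{n,j}(b,a;d,c)$ produces (\ref{odwrkol}). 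Taking $b=a$, $d=c$ specializes the rule to the parity $J_{n}(-x|a,a)=(-1)^{n}J_{n}(x|a,a)$, and the same coefficient-matching forces $c_{n,j}(a,a;b,b)=(-1)^{n-j}c_{n,j}(a,a;b,b)$, which vanishes when $n-j$ is odd, giving (\ref{conn4}). The one step needing genuine care is establishing the reflection rule with the exact sign $(-1)^{n}$ rather than merely up to an unknown scalar; everything else is bookkeeping with finite triangular matrices, and all the identities, being rational in the parameters, extend from the orthogonality range $\func{Re}(a),\func{Re}(b)>0$ to all complex values by analytic continuation.
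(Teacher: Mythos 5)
Your proposal is correct and follows essentially the same route as the paper: inverting the monomial expansion of $(x-1)^{n}$ via Lemma \ref{odwr} and reading (\ref{conn1})--(\ref{odwrr}) off the factorization $C(a,b;c,d)=E(a,b)E(c,d)^{-1}$, then using the reflection $J_{n}(-x|a,b)=(-1)^{n}J_{n}(x|b,a)$ for the sign identities and extending to complex parameters by rationality. The only cosmetic differences are that you derive the reflection rule from the symmetry of the weight together with the leading coefficient (\ref{lead}) where the paper cites it from the literature, and that you obtain (\ref{conn4}) as the $b=a$, $d=c$ specialization of (\ref{odwrkol}) rather than from the parity of $J_{n}(x|a,a)$; both variants are sound.
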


\begin{proof}
By Lemma \ref{odwr}, we deduce that $(x-1)^{n}/2^{n}\allowbreak =\allowbreak
\sum_{j=0}^{n}\tilde{e}_{n,j}(c,d)J_{j}(x|c,d)$. Combining this with (\ref%
{J1}) gives (\ref{conn2}). (\ref{conn4}) follows the fact that for $%
a\allowbreak =\allowbreak b$ the distribution $h\left( x|a,a\right) $ is
symmetric consequently the Jacobi polynomials $J_{n}(x|a,a)$ contain only
odd powers of $x$ if $n$ is odd or only even when $n$ is even. (\ref{odwrr}).

In order to get (\ref{odwrkol}), we first recall the following property of
Jacobi polynomials that appear, e.g., in \cite{IA} (4.14) that reads that%
\begin{equation}
(-1)^{n}J_{n}\left( x|a,b\right) =J_{n}\left( -x|b,a\right) .  \label{KK}
\end{equation}

Now we proceed as follows%
\begin{gather*}
J_{n}(-x|b,a)=\left( -1\right)
^{n}J_{n}(x|a,b)=\sum_{j=0}^{n}(-1)^{n}c_{n,j}(a,b;c,d)J_{j}(x|c,d) \\
=\sum_{j=0}^{n}(-1)^{n}\left( -1\right) ^{-j}c_{n,j}(a,b;c,d)\left(
-1\right) ^{j}J_{j}(x|c,d) \\
=\sum_{j=0}^{n}(-1)^{n-j}c_{n,j}(a,b;c,d)J_{j}(-x|d,c).
\end{gather*}

Now notice that $e_{n,j}(a,b)$, $\tilde{e}_{n,j}(a,b)$, $c_{n,j}(a,b;c,d)$
for all $n\geq j\geq 0$ are polynomials in $a$, $b$, $c$, $d$ hence (\ref%
{odw2}), (\ref{conn2}), (\ref{conn3}), (\ref{conn4}), (\ref{odwrr}), (\ref%
{odwrkol}) can be extended to all complex numbers.
\end{proof}

\begin{remark}
In order to understand better the assertions of the above-mentioned
Proposition let us define a set of lower triangular matrices $%
E(a,b)\allowbreak =\allowbreak \lbrack e_{n,k}(a,b)]$, $\tilde{E}%
(c,d)\allowbreak =\allowbreak \lbrack \tilde{e}_{n,k}(c,d)]$, $%
C(a,b;c,d)\allowbreak =\allowbreak \lbrack c_{n,j}(a,b;c,d)]$. Then we see
that the assertion of the Lemma (\ref{odwr}) and the formulae (\ref{odw2}), (%
\ref{conn2}) and (\ref{odwrr}) mean in terms of these matrices the following
respective identities:%
\begin{eqnarray}
E^{-1}(a,b) &=&\tilde{E}(a,b),  \label{M1} \\
C(a,b;c,d) &=&E(a,b)E^{-1}(c,d),  \label{M2} \\
C^{-1}(a,b;c,d) &=&C(c,d;a,b).  \label{M3}
\end{eqnarray}
\end{remark}

\begin{proposition}
\label{ccon}For all $n\geq 0$, $j\allowbreak =\allowbreak 0,\ldots n$ and $a 
$, $b\geq 0$ we have. Following directly (\ref{cnj}) and (\ref{dnj}) we
arrive at%
\begin{eqnarray}
e_{n,j}(b,b) &=&\binom{n}{j}(2b+n-1)^{(j)}(b+j)^{(n-j)}/n!,  \label{ebb} \\
\tilde{e}_{n,j}(b,b) &=&(b,b)=(-1)^{n-j}\frac{n!(b+j)^{(n-j)}(2b+2j-1)}{%
(n-j)!(2b+j-1)^{(n+1)}},  \label{oebb} \\
e_{n,j}(a,1/2) &=&\frac{\left( a-1/2+n\right) ^{\left( j\right) }\left(
1/2\right) ^{\left( n\right) }}{j!\left( n-j\right) !\left( 1/2\right)
^{\left( j\right) }},  \label{ea1/2} \\
\tilde{e}_{n,j}(a,1/2) &=&\left( -1\right) ^{n-j}\frac{\left(
a-1/2+2j\right) \left( 1/2+j\right) ^{\left( n-j\right) }j!}{\left(
n-j\right) !\left( a-1/2+j\right) ^{\left( n+1\right) }},  \label{oea1/2} \\
e_{n,j}(a,3/2) &=&\frac{\left( a+1/2+n\right) ^{\left( j\right) }\left(
3/2\right) ^{\left( n\right) }}{j!\left( n-j\right) !\left( 3/2\right)
^{\left( j\right) }},  \label{ea3/2} \\
\tilde{e}_{n,j}(a,3/2) &=&\left( -1\right) ^{n-j}\frac{\left(
a+1/2+2j\right) \left( 3/2+j\right) ^{\left( n-j\right) }j!}{\left(
n-j\right) !\left( a+1/2+j\right) ^{\left( n+1\right) }}.  \label{oea3/2}
\end{eqnarray}

\begin{gather}
c_{n,j}(a,1/2;b,1/2)=\left( -1\right) ^{n-j}c_{n,j}(1/2,a;1/2,b)
\label{a1/2} \\
=\left( -1\right) ^{n-j}\frac{\left( 1/2\right) ^{\left( n\right) }\left(
a-b\right) ^{\left( n-j\right) }\left( a-1/2+n\right) ^{\left( j\right)
}\left( b-1/2+2j\right) }{\left( n-j\right) !\left( 1/2\right) ^{\left(
j\right) }\left( b+1/2+2j\right) ^{\left( n-j\right) }\left( b-1/2+j\right)
^{\left( j+1\right) }},  \notag \\
c_{n,j}(a,3/2;b,3/2)=\left( -1\right) ^{n-j}c_{n,j}(3/2,a;3/2,b)
\label{a3/2} \\
=\left( -1\right) ^{n-j}\frac{\left( 3/2\right) ^{\left( n\right) }\left(
a-b\right) ^{\left( n-j\right) }\left( a+1/2+n\right) ^{\left( j\right)
}(b+1/2+2j)}{\left( n-j\right) !\left( 3/2\right) ^{\left( j\right) }\left(
b+3/2+2j\right) ^{\left( n-j\right) }\left( b+1/2+j\right) ^{\left(
j+1\right) }}  \notag
\end{gather}%
\begin{gather}
c_{n,j}(a,b;b,b)=(-1)^{n-j}c_{n,j}(b,a;b,b)  \label{ab} \\
=(-1)^{n-j}\frac{(b+j)^{(n-j)}(a-b)^{(n-j)}(a+b+n-1)^{(j)}(2b+2j-1)}{%
(n-j)!(2b+j-1)^{(n+1)}}  \notag \\
c_{n,j}(b,b;a,b)=(-1)^{n-j}c_{n,j}(b,b;b,a)  \label{ba} \\
=(-1)^{n-j}\frac{(b+j)^{(n-j)}(2b+n-1)^{(j)}(b-a)^{(n-j)}\left(
a+b+2j-1\right) }{(n-j)!(a+b+j-1)^{(n+1)}},  \notag
\end{gather}

\begin{gather}
c_{n,j}\left( a,a;b,b\right) =  \notag \\
\begin{cases}
0\text{,} & \text{ if }n-j\text{ is odd;} \\ 
\frac{(2b+2j-1)\left( 2a+n-1\right) ^{\left( j\right) }\left( a-b\right)
^{\left( \left( n-j\right) /2\right) }\left( b+j\right) ^{\left( \left(
n-j\right) /2\right) }\left( a+\left( n+j\right) /2\right) ^{\left( \left(
n-j\right) /2\right) }}{\left( \left( n-j\right) /2\right) !\left(
2b+j-1\right) ^{\left( n+1\right) }}\text{,} & \text{if }n-j\text{ is even.}%
\end{cases}%
\text{ }  \label{aabb}
\end{gather}
\end{proposition}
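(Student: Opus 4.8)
The proposition splits into three tiers, and I would treat them in order of increasing difficulty. The first tier is the six special-value formulas (\ref{ebb})--(\ref{oea3/2}). The plan is to obtain each of them by mere substitution of the relevant parameter ($a=b$, $b=1/2$, or $b=3/2$) into the defining expressions (\ref{cnj}) and (\ref{dnj2}), followed by elementary rewriting of Pochhammer symbols. The only tools needed here are $\binom{n}{m}/n!=1/(m!(n-m)!)$ and the telescoping relations $(x)^{(n)}/(x)^{(j)}=(x+j)^{(n-j)}$ and $(x+j)^{(k-j)}(x+k)^{(n-k)}=(x+j)^{(n-j)}$. For instance, passing from (\ref{cnj}) with $b=1/2$ to (\ref{ea1/2}) only requires rewriting $(1/2+j)^{(n-j)}=(1/2)^{(n)}/(1/2)^{(j)}$; the cases $b=3/2$ and $a=b$ are identical in spirit. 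These are bookkeeping steps with no real obstacle.

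The second tier is the four mixed connection coefficients (\ref{a1/2}), (\ref{a3/2}), (\ref{ab}), (\ref{ba}). In each, the first (sign) equality is immediate from the reflection formula (\ref{odwrkol}), so only the closed form needs proof. For this I would start from the master formula (\ref{conn2}), $c_{n,j}=\sum_{k=j}^{n}e_{n,k}\tilde{e}_{k,j}$, insert the explicit values just derived, and pull every factor independent of the summation index $k$ outside the sum, using the splittings $(x+j)^{(k-j)}(x+k)^{(n-k)}=(x+j)^{(n-j)}$ and $(x)^{(k+1)}=(x)^{(j)}(x+j)^{(k+1-j)}$. After this reduction the residual $k$-sum is, under the substitution $l=k-j$, exactly one of the three evaluations of Lemma \ref{a-b}; indeed (\ref{rozn}) is nothing but the terminating Chu--Vandermonde sum written in product form, and it is precisely this step that manufactures the difference factor $(a-b)^{(n-j)}$ (resp.\ $(b-a)^{(n-j)}$) appearing in the answers. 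A final recombination of Pochhammer symbols --- collapsing, e.g., $(2b+j-1)^{(j)}(2b+2j-1)(2b+2j)^{(n-j)}$ back into $(2b+j-1)^{(n+1)}$ --- yields the stated forms. The work is routine; the only care required is choosing the right one of (\ref{rozn})--(\ref{rozn3}) for each family.

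The third tier, the symmetric case (\ref{aabb}), is the genuinely hard part, and I would not attempt it through (\ref{conn2}) directly: substituting $e_{n,k}(a,a)\tilde{e}_{k,j}(b,b)$ produces a terminating hypergeometric ${}_3F_2$ at argument $1$ that is not Saalsch\"{u}tzian (the balance condition forces $a=b$), so no one-line summation applies, and the appearance of $(n-j)/2$ in (\ref{aabb}) signals that a quadratic transformation is needed. Instead the plan is to exploit Lemma \ref{even}. The vanishing for odd $n-j$ is already recorded in (\ref{conn4}). For even $n-j$ I would split according to the parity of $j$: writing $n=2m$, $j=2\ell$ and applying $J_{2m}(x|a,a)=\tfrac{m!(a+m)^{(m)}}{(2m)!}J_{m}(2x^{2}-1|1/2,a)$ to both sides of the connection relation identifies the even-indexed symmetric coefficients with $c_{m,\ell}(1/2,a;1/2,b)$, and the second half of Lemma \ref{even} identifies the odd-indexed ones with $c_{m,\ell}(3/2,a;3/2,b)$. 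Concretely, matching coefficients of the image polynomials gives
\begin{equation*}
c_{2m,2\ell}(a,a;b,b)=\frac{m!(a+m)^{(m)}(2\ell)!}{(2m)!\,\ell!(b+\ell)^{(\ell)}}\,c_{m,\ell}(1/2,a;1/2,b),
\end{equation*}
together with an analogous relation in the odd sector, into which one substitutes the already-proven (\ref{a1/2}) and (\ref{a3/2}) (after applying (\ref{odwrkol})).

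The main obstacle is the last step of this symmetric case. The expressions produced this way carry half-integer parameters such as $(a-1/2+m)^{(\ell)}$, $(1/2)^{(m)}$ and factors $(2m)!$, whereas the target (\ref{aabb}) is written entirely in integer-shifted Pochhammer symbols like $(2a+n-1)^{(j)}$ and $(2b+2j-1)$. Reconciling the two requires systematic use of the duplication identity $(2m)!=4^{m}m!\,(1/2)^{(m)}$ and its Pochhammer analogues, and --- more delicately --- verifying that the even-$j$ and odd-$j$ computations, which start from the two different half-integer families ($1/2$ versus $3/2$), collapse into the single uniform formula (\ref{aabb}). I expect this parity reconciliation and the duplication bookkeeping to absorb essentially all of the effort; everything preceding it is mechanical.
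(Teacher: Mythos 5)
Your plan follows the paper's proof essentially step for step: the six special values by direct substitution into (\ref{cnj})--(\ref{dnj2}), the mixed coefficients via the master sum (\ref{conn2}) reduced to Lemma \ref{a-b} after pulling out $k$-independent factors, the sign equalities from (\ref{odwrkol}), and the symmetric case (\ref{aabb}) handled by combining (\ref{conn4}) for odd $n-j$ with Lemma \ref{even} and the already-proved (\ref{a1/2}), (\ref{a3/2}) for even $n-j$. If anything, your reduction formula for $c_{2m,2\ell}(a,a;b,b)$ is more careful than the paper's, which silently drops the back-conversion factor $(2\ell)!/(\ell!\,(b+\ell)^{(\ell)})$ when passing from $J_{\ell}(2x^{2}-1|b,1/2)$ back to $J_{2\ell}(x|b,b)$.
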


\begin{proof}
Is moved to Section \ref{Dow}.
\end{proof}

\begin{remark}
The formulae (\ref{ab}), (\ref{ba}) and (\ref{aabb}) were obtained by R.
Askey in 1975 by other methods based on the properties of hypergeometric
functions (see chapter 7 of \cite{Ask75}).
\end{remark}

Formulae (\ref{ab}), (\ref{ba}), (\ref{aabb}), can be the source of very
large number of identities involving Pochhammer symbol. They could be based
on the identities true for all $n\geq j\geq 0$ and positive $a$ and $b.$%
\begin{eqnarray*}
e_{n,j}(a,b) &=&\sum_{k=j}^{n}c_{n,k}(a,b;b,b)e_{k,j}(b,b), \\
\tilde{e}_{n,j}(b,b) &=&\sum_{k=j}^{n}\grave{e}_{n,k}(a,b)c_{k,j}(a,b;b,b),
\\
c_{n,j}(a,a;b,b) &=&\sum_{k=j}^{n}c_{n,k}(a,a,a,b)c_{k,j}(a,b;b,b), \\
c_{n,j}(a,a;a,b) &=&\sum_{k=j}^{n}c_{n,k}(a,a;b,b)c_{k,j}(b,b;a,b)
\end{eqnarray*}%
and so on. The above-mentioned identities are based on the observations (\ref%
{M1}), (\ref{M2}), (\ref{conn3}). In the corollary below we present only a
sample of such identities.

\begin{corollary}
\label{Upr}The following identities, involving Pochhammer symbols of two
variables, can be obtained from (\ref{ab}), (\ref{ba}) and (\ref{01}) of
course adapted to current setting of Jacobi polynomials. We have for all $%
n>0 $ and complex $x$, $y$, $a$, $b$ having positive real parts%
\begin{gather}
\sum_{j=0}^{n}\left( -1\right) ^{n-j}\binom{n}{j}\left( x+y+n-1\right)
^{\left( n-j\right) }\left( x+n-j\right) ^{\left( j\right) }\left(
2y+n-j\right) ^{\left( j\right) }\left( y\right) ^{\left( n-j\right) }
\label{x-y} \\
=\left( x-y\right) ^{\left( n\right) }\left( y\right) ^{\left( n\right) }, 
\notag \\
\sum_{j=0}^{n}\left( -1\right) ^{j}\binom{n}{j}\left( 2y+n-1\right) ^{\left(
j\right) }\left( y+j\right) ^{\left( n-j\right) }\left( x\right) ^{\left(
j\right) }\left( x+y+j\right) ^{\left( n-j\right) }  \label{y-x} \\
=\left( y-x\right) ^{\left( n\right) }\left( y\right) ^{\left( n\right) }, 
\notag \\
\sum_{j=0}^{n}\binom{n}{j}\frac{\left( 2b+n-1\right) ^{\left( j\right)
}\left( b-a\right) ^{\left( n-j\right) }\left( a-b\right) ^{\left( j\right)
}(a+b+2j-1)}{\left( a+b+j-1\right) ^{\left( n+1\right) }\left( 2b\right)
^{\left( j\right) }}=0,  \label{001} \\
\sum_{j=0}^{n}\binom{n}{j}\frac{\left( a+b+n-1\right) ^{\left( j\right)
}\left( a-b\right) ^{\left( n-j\right) }\left( b-a\right) ^{\left( j\right)
}(2b+2j-1)}{\left( 2b+j-1\right) ^{\left( n+1\right) }\left( a+b\right)
^{\left( j\right) }}=0,  \label{002}
\end{gather}

\begin{gather}
\sum_{j=0}^{2n}\left( -1\right) ^{j}\binom{2n}{j}\left( x+j\right) ^{\left(
2n-j\right) }\left( 2x+2n-1\right) ^{\left( j\right) }\left( y\right)
^{\left( j\right) }\left( 2y+j\right) ^{\left( 2n-j\right) }  \label{dd1} \\
=\frac{\left( 2n\right) !}{n!}\left( x-y\right) ^{\left( n\right) }\left(
y\right) ^{\left( n\right) }\left( x+n\right) ^{\left( n\right) },  \notag \\
\sum_{j=0}^{2n+1}\left( -1\right) ^{j}\binom{2n+1}{j}\left( 2x+2n\right)
^{\left( j\right) }\left( x+j\right) ^{\left( 2n+1-j\right) }\left( y\right)
^{\left( j\right) }\left( 2y+j\right) ^{\left( 2n+1-j\right) }=0,
\label{dd2}
\end{gather}

\begin{eqnarray}
\sum_{j=0}^{n}\binom{n}{j}\frac{\left( x-y\right) ^{\left( n-j\right)
}\left( y-x\right) ^{\left( j\right) }\left( x+y+n-1\right) ^{\left(
j\right) }}{\left( x+y\right) ^{\left( j\right) }\left( 2y+j-1\right)
^{\left( j\right) }\left( 2y+2j\right) ^{\left( n-j\right) }} &=&0,
\label{aaababaa} \\
\sum_{j=0}^{n}\binom{n}{j}\frac{\left( y-x\right) ^{\left( j\right) }\left(
x-y\right) ^{\left( n-j\right) }\left( x+n-1/2\right) ^{\left( j\right) }}{%
\left( x+1/2\right) ^{\left( j\right) }\left( y+j-1/2\right) ^{\left(
j\right) }\left( y+2j+1/2\right) ^{\left( n-j\right) }} &=&0.
\label{aabbbbaa}
\end{eqnarray}
\end{corollary}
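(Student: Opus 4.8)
The plan is to read every one of these identities off a single structural fact established in Propositions~\ref{elem} and~\ref{ccon}: the connection coefficients assemble into lower-triangular matrices obeying $C(a,b;c,d)=E(a,b)E^{-1}(c,d)$ (this is (\ref{M2}), equivalently the expansion (\ref{conn2})), the inverse law $C^{-1}(a,b;c,d)=C(c,d;a,b)$ ((\ref{M3}), equivalently (\ref{odwrr})), and the composition law (\ref{conn3}). Each identity is obtained by fixing one entry---always the $(n,0)$ entry---of such a matrix equation, substituting the explicit closed forms (\ref{ab}), (\ref{ba}), (\ref{aabb}) for the $c$'s together with (\ref{ebb})--(\ref{oebb}) for $e,\tilde e$, and then clearing denominators. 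Throughout I would use only two elementary facts about rising factorials: the concatenation rule $(b)^{(k)}(b+k)^{(n-k)}=(b)^{(n)}$ (which makes the binomial $\binom{n}{k}$ reappear once the $1/n!$ is absorbed), and the telescoping rule $(c+d)^{(n)}/(c+d)^{(k)}=(c+d+k)^{(n-k)}$, used to turn the lower-index denominators of $\tilde e_{k,0}$ into the upper-index factors seen in the statements.

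First I would treat the identities with a nonzero right-hand side, (\ref{x-y}), (\ref{y-x}), (\ref{dd1}), together with the vanishing companion (\ref{dd2}). These all come from a single connection coefficient written two ways: its expansion $c_{n,0}=\sum_{k}e_{n,k}\tilde e_{k,0}$ from (\ref{conn2}) set equal to its closed form. For (\ref{x-y}) I would take $c_{n,0}(y,x;y,y)$; the symmetry (\ref{odwrkol}) rewrites it as $(-1)^n c_{n,0}(x,y;y,y)$, which is the $j=0$ case of (\ref{ab}) and evaluates (using $(2y-1)^{(n+1)}=(2y-1)(2y)^{(n)}$) to $(y)^{(n)}(x-y)^{(n)}/(n!(2y)^{(n)})$; expanding the same coefficient by (\ref{conn2}) with $e_{n,k}(y,x)$ and $\tilde e_{k,0}(y,y)=(-1)^k(y)^{(k)}/(2y)^{(k)}$ and clearing $n!(2y)^{(n)}$ reproduces (\ref{x-y}) after the substitution $j\mapsto n-j$. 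The identity (\ref{y-x}) is the mirror image, obtained from $c_{n,0}(y,y;y,x)$ via (\ref{ba}). For (\ref{dd1}) and (\ref{dd2}) I would instead use the symmetric-to-symmetric coefficient $c_{N,0}(x,x;y,y)$ with $N=2n$ and $N=2n+1$: its closed form is (\ref{aabb}), which is nonzero of the stated product shape when $N$ is even and is identically zero when $N$ is odd (this is (\ref{conn4})). Equating (\ref{aabb}) with $\sum_k e_{N,k}(x,x)\tilde e_{k,0}(y,y)$ and clearing $N!(2y)^{(N)}$ yields (\ref{dd1}) from the even case and (\ref{dd2}) from the odd case.

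Next I would treat the four vanishing identities (\ref{001}), (\ref{002}), (\ref{aaababaa}), (\ref{aabbbbaa}). Each is the $(n,0)$ entry, for $n\ge 1$, of a product $C(p,q;r,s)C(r,s;p,q)=I$ (equivalently (\ref{01}) adapted to the Jacobi setting), where both factors are known in closed form. Concretely, (\ref{001}) comes from $C(b,b;a,b)C(a,b;b,b)=I$: multiplying $c_{n,k}(b,b;a,b)$ from (\ref{ba}) by $c_{k,0}(a,b;b,b)$ from (\ref{ab}), the product simplifies via $(b+k)^{(n-k)}(b)^{(k)}=(b)^{(n)}$ to $(-1)^n\frac{(b)^{(n)}}{n!}$ times precisely the $k$-th summand of (\ref{001}), so the vanishing of the sum is forced. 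Identity (\ref{002}) is the transposed product $C(a,b;b,b)C(b,b;a,b)=I$, and (\ref{aaababaa}) is $C(x,y;y,y)C(y,y;x,y)=I$, where in addition a further concatenation, $(2y+s-1)^{(n+1)}=(2y+s-1)^{(s)}(2y+2s-1)^{(n-s+1)}$, together with the numerator factor $(2y+2s-1)$ splits the single $(n+1)$-fold denominator into the two blocks $(2y+s-1)^{(s)}(2y+2s)^{(n-s)}$ displayed there. The half-integer identity (\ref{aabbbbaa}) is the same composition mechanism carried out with the $(\cdot,1/2)$ closed forms (\ref{a1/2}) (equivalently (\ref{ea1/2})--(\ref{oea1/2})), the $1/2$-shifts being exactly those produced by the even-part reduction of Lemma~\ref{even}.

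Finally, since (as noted in the proof of Proposition~\ref{elem}) all of $e_{n,j}$, $\tilde e_{n,j}$ and $c_{n,j}$ are polynomials in the parameters, each identity, once verified for $\func{Re}(a),\func{Re}(b)>0$ (or the corresponding positivity of $x,y$), extends to all complex values by the identity theorem for polynomials. The main obstacle is organizational rather than conceptual: it is the bookkeeping of signs, the repeated use of the two factorial rules above, the reindexing $j\mapsto n-j$, and---most delicately---the splitting of a single $(n+1)$-fold rising factorial in the denominator into blocks of lengths $s$ and $n-s$ as needed for (\ref{aaababaa}) and (\ref{aabbbbaa}). Once those reductions are carried out, each identity is an immediate transcription of a single matrix entry.
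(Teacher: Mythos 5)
Your proposal is correct, and its engine is the same as the paper's: every identity is a single matrix entry of one of the structural relations $C=E\tilde{E}$ (i.e. (\ref{conn2})), $C(\cdot;\cdot)C(\cdot;\cdot)=I$ (i.e. (\ref{odwrr})/(\ref{01})), or the composition law (\ref{conn3}), into which the closed forms (\ref{ab}), (\ref{ba}), (\ref{aabb}), (\ref{a1/2}) are substituted and then simplified by the concatenation rule $(b)^{(k)}(b+k)^{(n-k)}=(b)^{(n)}$ and the telescoping of the $(n+1)$-fold denominators. The differences from the paper are in which equivalent factorization you pick and where you specialize: you always take the $(n,0)$ entry and relabel parameters, whereas the paper works with the general $(n,j)$ entry and substitutes $x=a+j$, $y=b+j$ at the end -- these coincide because the resulting polynomial identity depends on the parameters only through the shifted combinations; for (\ref{dd1})--(\ref{dd2}) you use $C(x,x;y,y)=E(x,x)\tilde{E}(y,y)$ directly where the paper routes through $c_{n,j}(a,a;b,b)=\sum_k c_{n,k}(a,a;a,b)c_{k,j}(a,b;b,b)$; and for (\ref{aabbbbaa}) you compose the half-integer closed forms (\ref{a1/2}) where the paper instead takes the even-indexed entries of $C(a,a;b,b)C(b,b;a,a)=I$ and invokes the duplication formula $(2z)^{(2t)}=4^{t}(z)^{(t)}(z+1/2)^{(t)}$. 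Your choices are, if anything, slightly more economical (they avoid the duplication step and one layer of composition), and all the individual reductions you describe -- the reflection (\ref{odwrkol}) to pass from $c_{n,0}(y,x;y,y)$ to (\ref{ab}), the evaluation $\tilde{e}_{k,0}(y,y)=(-1)^{k}(y)^{(k)}/(2y)^{(k)}$, the cancellation of the factor $(2y+2s-1)$ against the split denominator in (\ref{aaababaa}) -- check out against the stated formulas.
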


\begin{proof}
Is shifted to Section \ref{Dow}.
\end{proof}

Notice that identities (\ref{x-y}), (\ref{y-x}), (\ref{dd1}), (\ref{dd2})
are valid for all complex $x$ and $y$.

More properties of orthogonal polynomials one can read in \cite{Chih79}, 
\cite{Sim98} or \cite{Sim05}. Take now 
\begin{equation*}
\allowbreak d\alpha \left( x\right) \allowbreak =\allowbreak
(x+1)^{a-1}\left( 1-x\right) ^{b-1}/B\left( a,b\right)
\end{equation*}%
and 
\begin{equation*}
d\beta \left( x\right) \allowbreak =\allowbreak (x+1)^{c-1}\left( 1-x\right)
^{d-1}/B\left( c,d\right) ,
\end{equation*}%
we see that 
\begin{equation}
\frac{d\beta }{d\alpha }\left( x\right) =2^{-(c-a)-(d-b)}\frac{B(a,b)}{%
B\left( c,d\right) }\left( x+1\right) ^{c-a}\left( 1-x\right) ^{d-b}.
\label{stos}
\end{equation}%
Hence it is a beta density if $c-a,d-b>-1$ and 
\begin{equation*}
\int \left( \frac{d\beta }{d\alpha }(x)\right) ^{2}d\alpha \left( x\right)
\allowbreak =\allowbreak \int \left( x+1\right)
^{2c-a-1}(1-x)^{2d-b-1}dx<\infty .
\end{equation*}%
The last integral is finite if $2c-a,2d-b>0.$

Moreover, notice also that%
\begin{equation*}
c_{n,0}(c,d;a,b)\allowbreak =\allowbreak \sum_{m=0}^{n}\frac{\left(
c+d+n-1\right) ^{\left( m\right) }(d+m)^{(n-m)}}{m!(n-m)!}(-1)^{m}\frac{%
\left( b\right) ^{\left( m\right) }}{(a+b)^{\left( m\right) }},
\end{equation*}%
and 
\begin{equation*}
\hat{a}_{n}(a,b)\allowbreak =\allowbreak \frac{\left( a\right) ^{\left(
n\right) }\left( b\right) ^{\left( n\right) }}{n!(a+b+2n-1)(a+b)^{\left(
n-1\right) }}.
\end{equation*}

Notice also that following (\ref{stos}) 
\begin{equation*}
\frac{h(x|c,d)}{h\left( x|a,b\right) }=\frac{2B(a,b)B\left(
c-a+1,d-b+1\right) }{B\left( c,d\right) }h\left( x|c-a+1,d-b+1\right) .
\end{equation*}%
Hence, we have the following two infinite, convergent in mean-squares,
expansions :

\begin{theorem}
For $2c>a$ and $2d>b$ and $x\in (-1,1)$ we have%
\begin{eqnarray*}
h(x|c,d) &=&h(x|a,b)\sum_{n\geq 0}c_{n,0}(a,b;c,d)J_{n}(x|a,b)/\hat{a}%
_{n}(a,b), \\
\frac{h(x|c,d)}{h\left( x|a,b\right) } &=&\frac{B(a,b)B\left(
c-a+1,d-b+1\right) }{B\left( c,d\right) } \\
&&\times \sum_{n\geq 0}(2n+1)c_{n,0}\left( 1,1,c-a+1,d-b+1\right)
J_{n}(x|1,1).
\end{eqnarray*}
\end{theorem}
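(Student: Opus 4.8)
The plan is to apply the general Theorem \ref{podst} twice, once for each of the two reference measures relative to which we expand.

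For the first expansion I would take $\nu=h(x|a,b)\,dx$ and $\mu=h(x|c,d)\,dx$, so that the orthogonal families in play are $\beta_n=J_n(x|a,b)$ and $\alpha_n=J_n(x|c,d)$. Before invoking the theorem one must verify its hypothesis $\int(d\mu/d\nu)^2\,d\nu<\infty$; by (\ref{stos}) the Radon--Nikodym derivative $d\mu/d\nu$ is a constant multiple of $(x+1)^{c-a}(1-x)^{d-b}$, so the integral reduces to $\int_{-1}^{1}(x+1)^{2c-a-1}(1-x)^{2d-b-1}\,dx$, which is finite precisely when $2c>a$ and $2d>b$ --- this is exactly where the stated hypothesis enters. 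The connection coefficients $c_{n,k}$ of Theorem \ref{podst} are, by Proposition \ref{elem} applied to $J_n(x|a,b)=\sum_k c_{n,k}(a,b;c,d)J_k(x|c,d)$, equal to $c_{n,k}(a,b;c,d)$. Theorem \ref{podst} then gives $a_n=c_{n,0}(a,b;c,d)/\hat{\beta}_n$; since $\hat{\beta}_n=\int J_n^2(x|a,b)h(x|a,b)\,dx=\hat{a}_n(a,b)$, multiplying the resulting expansion of $h(x|c,d)/h(x|a,b)$ through by $h(x|a,b)$ yields the first displayed formula.

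For the second expansion the idea is to switch the reference measure to the uniform one, so that the expanding family becomes the Legendre polynomials $J_n(x|1,1)=P_n(x)$. First I would rewrite the ratio using the identity recorded just above the theorem,
\begin{equation*}
\frac{h(x|c,d)}{h(x|a,b)}=\frac{2B(a,b)B(c-a+1,d-b+1)}{B(c,d)}\,h(x|c-a+1,d-b+1),
\end{equation*}
and then apply Theorem \ref{podst} with $\nu=h(x|1,1)\,dx$ and $\mu=h(x|c-a+1,d-b+1)\,dx$, i.e. $\beta_n=J_n(x|1,1)$ and $\alpha_n=J_n(x|c-a+1,d-b+1)$. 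Proposition \ref{elem} identifies the relevant connection coefficients as $c_{n,0}(1,1;c-a+1,d-b+1)$, and the normalizing factor collapses: the formula for $\hat{a}_n(a,b)$ gives $\hat{a}_n(1,1)=1/(2n+1)$, whence $a_n=(2n+1)c_{n,0}(1,1;c-a+1,d-b+1)$. Finally, since $h(x|1,1)=1/2$ one has $h(x|c-a+1,d-b+1)/h(x|1,1)=2h(x|c-a+1,d-b+1)$; substituting this into the rewriting collapses the prefactor $2B(a,b)B(c-a+1,d-b+1)/B(c,d)$ to $B(a,b)B(c-a+1,d-b+1)/B(c,d)$, which is exactly the constant appearing in the second displayed formula.

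I expect the point requiring genuine care to be the verification of the mean-square convergence hypothesis rather than the algebra, which is routine once Proposition \ref{elem} is in hand. In particular, for the second expansion the relevant integral $\int_{-1}^{1}\bigl(h(x|c-a+1,d-b+1)/h(x|1,1)\bigr)^2 h(x|1,1)\,dx$ is a constant multiple of $\int_{-1}^{1}(x+1)^{2(c-a)}(1-x)^{2(d-b)}\,dx$, which is finite exactly when $c-a>-1/2$ and $d-b>-1/2$; one should note that this, together with $c-a+1,d-b+1>0$ (so that $h(x|c-a+1,d-b+1)$ is an honest Beta density), is the genuine condition governing the second expansion, and check its compatibility with the blanket hypothesis $2c>a$, $2d>b$ in the parameter range of interest. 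Once both integrability conditions are in place, the asserted $L^2(d\nu)$ convergence is immediate from Theorem \ref{podst}, and no almost-everywhere refinement (hence no appeal to Rademacher--Menshov) is needed.
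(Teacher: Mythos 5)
Your proposal is correct and follows essentially the same route as the paper, whose proof consists of the single sentence ``Follows directly Theorem \ref{podst} and the formulae concerning Beta distribution''; you simply carry out that application explicitly, using (\ref{stos}), the formula for $\hat{a}_n(a,b)$ (so that $\hat{a}_n(1,1)=1/(2n+1)$), and the rewriting of $h(x|c,d)/h(x|a,b)$ as a multiple of $h(x|c-a+1,d-b+1)$, exactly as the paper intends. Your closing observation that the second expansion really needs $c-a>-1/2$ and $d-b>-1/2$ (which is not implied by $2c>a$, $2d>b$ when $a>1$ or $b>1$) is a genuine refinement that the paper's one-line proof glosses over.
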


\begin{proof}
Follows directly Theorem \ref{podst} and the formulae concerning Beta
distribution.
\end{proof}

\section{Notation and basic definitions used in $q-$series\label{q-series}}

$q$ is a parameter. We will assume that $-1<q\leq 1$ unless otherwise
stated. The case $q\allowbreak =\allowbreak 1$ may not always be considered
directly, but sometimes as left-hand side limit ( i.e.,$q\longrightarrow
1^{-}$). We will point out these cases.

We will use traditional notations of the $q-$series theory i.e.,%
\begin{equation*}
\left[ 0\right] _{q}\allowbreak=\allowbreak0,~\left[ n\right]
_{q}\allowbreak=\allowbreak1+q+\ldots+q^{n-1}\allowbreak,\left[ n\right]
_{q}!\allowbreak=\allowbreak\prod_{j=1}^{n}\left[ j\right] _{q},\text{with }%
\left[ 0\right] _{q}!\allowbreak=1\text{,}
\end{equation*}%
\begin{equation*}
\QATOPD{[}{]}{n}{k}_{q}=\left\{ 
\begin{array}{ccc}
\frac{\left[ n\right] _{q}!}{\left[ n-k\right] _{q}!\left[ k\right] _{q}!} & 
, & n\geq k\geq0 \\ 
0 & , & \text{otherwise}%
\end{array}
\right. \text{.}
\end{equation*}
$\binom{n}{k}$ will denote the ordinary, well known binomial coefficient.

It is useful to use the so-called $q-$Pochhammer symbol for $n\geq1:$%
\begin{equation*}
\left( a|q\right) _{n}=\prod_{j=0}^{n-1}\left( 1-aq^{j}\right) ,~~\left(
a_{1},a_{2},\ldots,a_{k}|q\right)
_{n}\allowbreak=\allowbreak\prod_{j=1}^{k}\left( a_{j}|q\right) _{n}\text{,}
\end{equation*}
with $\left( a|q\right) _{0}\allowbreak=\allowbreak1$.

Often $\left( a|q\right) _{n}$ as well as $\left( a_{1},a_{2},\ldots
,a_{k}|q\right) _{n}$ will be abbreviated to $\left( a\right) _{n}$ and 
\newline
$\left( a_{1},a_{2},\ldots ,a_{k}\right) _{n}$, if it will not cause
misunderstanding. In this paper most often $\left( a|q\right) _{n}$ will be
abbreviated to $\left( a\right) _{n}$.

We will also use the following symbol $\left\lfloor n\right\rfloor $ to
denote the largest integer not exceeding $n$.

It is worth to mention the following $4$ formulae, that are well known.
Namely, the following formulae are true for $\left\vert t\right\vert <1$, $%
\left\vert q\right\vert <1$ (derived already by Euler, see \cite{Andrews1999}
Corollary 10.2.2 or \cite{KLS}(Subsections 1.8 1.14)) 
\begin{align}
\frac{1}{(t)_{\infty }}\allowbreak & =\allowbreak \sum_{k\geq 0}\frac{t^{k}}{%
(q)_{k}}\text{, }\frac{1}{(t)_{n+1}}=\sum_{j\geq 0}\QATOPD[ ] {n+j}{j}%
_{q}t^{j}\text{,}  \label{binT} \\
(t)_{\infty }\allowbreak & =\allowbreak \sum_{k\geq 0}(-1)^{k}q^{\binom{k}{2}%
}\frac{t^{k}}{(q)_{k}}\text{, }\left( t\right) _{n}=\sum_{j=0}^{n}\QATOPD[ ]
{n}{j}_{q}q^{\binom{j}{2}}(-t)^{j}\text{.}  \label{obinT}
\end{align}

In particular, we have (after setting $t\allowbreak =\allowbreak 1$) for
finite $n>0$ and all complex $q$%
\begin{equation*}
0=\sum_{j=0}^{n}\QATOPD[ ] {n}{j}_{q}q^{\binom{j}{2}}(-1)^{j}.
\end{equation*}%
If we pass with $n$ to infinity then for all $\left\vert q\right\vert <1$ we
have%
\begin{equation*}
0=\sum_{j\geq 0}(-1)^{j}q^{\binom{j}{2}}/\left( q\right) _{j}.
\end{equation*}%
It is easy to notice that 
\begin{equation*}
\left( q\right) _{n}=\left( 1-q\right) ^{n}\left[ n\right] _{q}!
\end{equation*}%
and that%
\begin{equation*}
\QATOPD[ ] {n}{k}_{q}\allowbreak =\allowbreak \allowbreak \left\{ 
\begin{array}{ccc}
\frac{\left( q\right) _{n}}{\left( q\right) _{n-k}\left( q\right) _{k}} & ,
& n\geq k\geq 0 \\ 
0 & , & \text{otherwise}%
\end{array}%
\right. \text{.}
\end{equation*}%
\newline
The above-mentioned formula is just an example, where direct setting $%
q\allowbreak =\allowbreak 1$ is senseless, however, the passage to the limit 
$q\longrightarrow 1^{-}$ makes sense.

Notice that, in particular we get 
\begin{equation}
\left[ n\right] _{1}\allowbreak =\allowbreak n,~\left[ n\right]
_{1}!\allowbreak =\allowbreak n!,~\QATOPD[ ] {n}{k}_{1}\allowbreak
=\allowbreak \binom{n}{k},~(a)_{1}\allowbreak =\allowbreak 1-a,~\left(
a|1\right) _{n}\allowbreak =\allowbreak \left( 1-a\right) ^{n}  \label{q1}
\end{equation}%
and 
\begin{equation}
\left[ n\right] _{0}\allowbreak =\allowbreak \left\{ 
\begin{array}{ccc}
1 & \text{if} & n\geq 1 \\ 
0 & \text{if} & n=0%
\end{array}%
\right. ,~\left[ n\right] _{0}!\allowbreak =\allowbreak 1,~\QATOPD[ ] {n}{k}%
_{0}\allowbreak =\allowbreak 1,~\left( a|0\right) _{n}\allowbreak
=\allowbreak \left\{ 
\begin{array}{ccc}
1 & \text{if} & n=0 \\ 
1-a & \text{if} & n\geq 1%
\end{array}%
\right. .  \label{q2}
\end{equation}

The symbol $i$ will denote the imaginary unit, unless otherwise clearly
stated. Let us define also:%
\begin{align}
v(x|a)\allowbreak & =\allowbreak 1-2ax+a^{2},  \label{vxa} \\
l(x|a)& =(1+a)^{2}-4x^{2}a,  \label{lsa} \\
w(x,y|a)& =(1-a^{2})^{2}-4xya(1+a^{2})+4a^{2}(x^{2}+y^{2}).  \label{wxya}
\end{align}%
Notice that, we have

\begin{align}
\left( ae^{i\theta },ae^{-i\theta }\right) _{n}& =\prod_{k=0}^{n}v\left(
x|aq^{k}\right) \text{,}  \label{rozklv} \\
\left( te^{i\left( \theta +\phi \right) },te^{i\left( \theta -\phi \right)
},te^{-i\left( \theta -\phi \right) },te^{-i\left( \theta +\phi \right)
}\right) _{n}& =\prod_{k=0}^{n}w\left( x,y|tq^{k}\right) \text{,}
\label{rozklw} \\
\left( ae^{2i\theta },ae^{-2i\theta }\right) _{n}& =\prod_{k=0}^{n}l\left(
x|aq^{k}\right) \text{,}  \label{rozkll}
\end{align}%
where, and, as usually in the $q-$series theory, $x\allowbreak =\allowbreak
\cos \theta $ and $y=\cos \phi $.

In the sequel we will often use the following easy to justify identities
taken almost directly from \cite{KLS}(Sections 1.8, 1.9, 1.10)

\begin{eqnarray}
\left( a\right) _{n+k} &=&\left( a\right) _{n}\left( aq^{n}\right) _{k},
\label{s1} \\
\frac{\left( aq^{n}\right) _{k}}{\left( aq^{k}\right) _{n}} &=&\frac{\left(
a\right) _{k}}{\left( a\right) _{n}},  \label{s2} \\
\left( a^{2}|q^{2}\right) _{\infty } &=&\left( a\right) _{\infty }\left(
-a\right) _{\infty },  \label{s3} \\
\left( a\right) _{\infty } &=&\left( a|q^{2}\right) _{\infty }\left(
aq|q^{2}\right) _{\infty }.  \label{s4}
\end{eqnarray}

In order to simplify some expressions, we will often use the following easy
to justify formulae true for $n\geq k\geq 0:$ 
\begin{eqnarray}
\left( aq^{k-1}\right) _{k}\left( aq^{2k}\right) _{n-k}\allowbreak
&=&\allowbreak \left( aq^{k-1}\right) _{n}\frac{\left( 1-aq^{n+k-1}\right) }{%
\left( 1-aq^{2k-1}\right) },  \label{knk1} \\
(a)_{k}\left( aq^{n+k-1}\right) _{n-k}\allowbreak &=&\allowbreak \frac{%
\left( a\right) _{2n-1}}{\left( aq^{k}\right) _{n-1}}.  \label{knk2}
\end{eqnarray}

\section{Polynomial identities}

As presented in the introduction, to prove the identity, all we need are the
related pairs of orthogonal polynomials and the sets of CC between them.
That is the rest of the paper is organized in the following way. We will
recall the pair of families of orthogonal polynomials, indicated where one
can find their definition and basic properties and the sets of CC, if they
are known. If not, we will derive them and then present the two identities.
That is the rest of the paper is organized in the following way. We will
recall the pair of families of orthogonal polynomials, indicated where one
can find their definition and basic properties and the sets of CC if they
are known. If not, we will derive them and then present the two identities.

One has to point out that since $q-$Pochhammer symbol is a polynomial in
several variables, hence the identity where it appears, is at most a
rational function of its variables, consequently the identity, being
primarily true for reals or conjugate pairs of complex variables can be
extended to all complex numbers.

As remarked in, say \cite{Szab-bAW},\cite{Szab22}, the considered families
of polynomials are orthogonal with respect to measures supported either on $%
[-1,1]$ or on $S(q)\allowbreak =\allowbreak \lbrack -\frac{2}{\sqrt{1-q}},%
\frac{2}{\sqrt{1-q}})$, (if the parameter $q$ is fixed). In this paper we
will consider only the first case i.e. all measures that makes our
polynomials orthogonal will be supported on $[-1,1]$.

As mentioned in the Introduction, there are at least $7$ families of
orthogonal polynomials (Chebyshev, $q-$Hermite, big $q-$Hermite, Rogers,
Al-Salam--Chihara, continuous dual $q-$Hahn., Askey--Wilson using
terminology of \cite{KLS}), that can be considered as belonging directly to
AW scheme and having absolutely continuous measure which make them
orthogonal. Thus, theoretically we have $21\allowbreak =\allowbreak \binom{7%
}{2}$ pairs and consequently $21$ and possible identities. However, not all
of them are new and interesting. For example, the pair of $q-$Hermite and
big $q-$Hermite polynomials, the pair of big $q-$Hermite and
Al-Salam--Chihara polynomials or the pair of $q-$Hahn and Al-Salam--Chihara
polynomials produce trivial identities that can be derived directly from the
binomial theorem (\ref{obinT}) with $t\allowbreak =\allowbreak 1$. As the
result, we will analyze $8$ pairs of polynomials from AW scheme.

One has to observe that in some cases we obtain the well known identities
after applying some relatively simple simplifications. This shows that our
idea of seeking useful identities in an organized way is just.

This section will be divided on subsections named after the names of the
polynomials forming a chosen pair $\left\{ \alpha _{n},\beta _{n}\right\} $
of families of polynomials. We will start each subsection by the reference
to the literature where the given pair of polynomials is present, then we
will present the mutual expansions of each member of a pair with respect to
the other thus providing the two sets of connection coefficients. Then we
give sequences of numbers $\left\{ \hat{\alpha}_{n},\hat{\beta}_{n}\right\} $%
.

Some of these families of polynomials have traditional names and symbols
denoting them. Let us mention these traditional notations and terminology.

Chebyshev of the second kind are traditionally denoted by $U_{n}(x)$. The $%
q- $Hermite (proper name is continuous $q-$Hermite) polynomials are
traditionally denoted as $\left\{ h_{n}(x|q)\right\} $ (compare \cite{KLS}).
The Al-Salam-Chihara (briefly ASC) polynomials are denoted as $\left\{
Q_{n}(x|a,b,q)\right\} $, (compare \cite{KLS}). The Rogers or $q-$%
ultraspherical polynomials are denoted $\left\{ C_{n}(x|\beta ,q)\right\} $,
(see \cite{KLS}).

\subsection{$q-$Hermite and Chebyshev of the second kind.}

These families of polynomials are described, e.g., in \cite{KLS}(Section
14.26, $q-$Hermite), or \cite{Szab-bAW}(Section 3.1, $q-$Hermite and Section
2.2 ). We know also the CC between these families since they are based on
the famous formula for "change of basis" in $q-$Hermite polynomials
presented, e.g., \cite{Szab-bAW}(formula 3.8). Consequently, we have:%
\begin{eqnarray*}
U_{n}(x) &=&\sum_{j=0}^{\left\lfloor n/2\right\rfloor }(-1)^{j}q^{\binom{j+1%
}{2}}\QATOPD[ ] {n-j}{j}_{q}h_{n-2j}(x|q), \\
h_{n}\left( y|q\right) &=&\sum_{k=0}^{\left\lfloor n/2\right\rfloor }\frac{%
q^{k}-q^{n-k+1}}{1-q^{n-k+1}}\QATOPD[ ] {n}{k}_{q}U_{n-2k}\left( y\right) .
\end{eqnarray*}%
Hence, we can read coefficients $c_{n,j}$ and $\bar{c}_{n,j}$. In particular
we have 
\begin{equation*}
c_{2k,0}\allowbreak =\allowbreak \left( -1\right) ^{k}q^{\binom{k+1}{2}}~%
\text{and~}\bar{c}_{2k,0}\allowbreak =\allowbreak \QATOPD[ ] {2k}{k}_{q}%
\frac{q^{k}-q^{k+1}}{1-q^{k+1}}
\end{equation*}%
for $j\allowbreak =\allowbreak 2k$ and $0$ otherwise. We also have $\hat{U}%
_{n}\allowbreak =\allowbreak 1$ and $\hat{h}_{n}(q)\allowbreak =\allowbreak
\left( q\right) _{n}$. Consequently, the following result follows from these
two expansions:

\begin{theorem}
i) For all $m\geq 1$ and $\left\vert q\right\vert <1$%
\begin{gather*}
\sum_{j=0}^{m}(-1)^{j}q^{\binom{j}{2}}\QATOPD[ ] {m}{m-j}_{q}\QATOPD[ ] {2m-j%
}{m}_{q}\frac{1}{(1-q^{m-j+1})}=0, \\
0=\sum_{k=0}^{m}(-1)^{k}q^{\binom{k}{2}}\QATOPD[ ] {2m}{m-k}_{q}\frac{%
1-q^{2k+1}}{1-q^{m+k+1}}.
\end{gather*}

\begin{eqnarray}
\left( q\right) _{\infty }\prod_{j=1}^{\infty }l(x|q^{j}) &=&\sum_{j\geq
0}(-1)^{j}q^{\binom{j+1}{2}}U_{2j}(x),  \label{inU} \\
\frac{1}{\left( q\right) _{\infty }\prod_{j=1}^{\infty }l(x|q^{j})}
&=&\sum_{j\geq 0}\frac{(1-q)q^{j}}{\left( q\right) _{j}^{2}(1-q^{j+1})}%
h_{2j}(x|q).  \label{nah}
\end{eqnarray}
\end{theorem}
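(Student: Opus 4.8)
The plan is to obtain all four identities as specializations of Theorem~\ref{podst} to the pair formed by the continuous $q$-Hermite probability measure $\mu_h$ (with $\hat{h}_n=(q)_n$) and the Wigner measure with density $h(x|3/2,3/2)$ (for which $\hat{U}_n=1$), whose two connection-coefficient expansions are exactly the ones displayed just before the statement. In the paper's notation I write $c_{n,0}$ for the coefficient of $h_0$ in $U_n$ and $\bar{c}_{n,0}$ for the coefficient of $U_0$ in $h_n$; reading these off the two expansions gives $c_{2j,0}=(-1)^j q^{\binom{j+1}{2}}$ with $c_{2j+1,0}=0$, and $\bar{c}_{2j,0}=\frac{q^j(1-q)}{1-q^{j+1}}\QATOPD[ ] {2j}{j}_{q}$ with $\bar{c}_{2j+1,0}=0$.

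The only genuinely analytic step is identifying the Radon--Nikodym derivative. Writing $x=\cos\theta$, the $q$-Hermite weight is proportional to $\sqrt{1-x^2}\,(e^{2i\theta},e^{-2i\theta})_\infty$ and the Wigner weight to $\sqrt{1-x^2}$, so the ratio is proportional to $(e^{2i\theta},e^{-2i\theta})_\infty$, which by the factorization (\ref{rozkll}) equals $\prod_{k\ge 0}l(x|q^k)$. Peeling off the $k=0$ factor via $l(x|1)=4(1-x^2)$ and tracking the probability-normalizing constants gives
\[
\frac{d\mu_h}{d\mu_U}(x)=(q)_\infty\prod_{j\ge 1}l(x|q^j).
\]
For (\ref{inU}) I take the expansion basis $\beta_n=U_n$, so $a_n=c_{n,0}/\hat{U}_n=c_{n,0}$, which reproduces the stated right-hand side; for (\ref{nah}) I interchange the two measures, take $\beta_n=h_n$, and compute $a_{2j}=\bar{c}_{2j,0}/\hat{h}_{2j}=\frac{q^j(1-q)}{1-q^{j+1}}\QATOPD[ ] {2j}{j}_{q}/(q)_{2j}$, which collapses to $\frac{(1-q)q^j}{(q)_j^2(1-q^{j+1})}$ on using $\QATOPD[ ] {2j}{j}_{q}=(q)_{2j}/(q)_j^2$. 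Both ratios are bounded above and below on $[-1,1]$ since $l(x|q^k)=(1+q^k)^2-4x^2 q^k\ge(1-q^k)^2>0$ there; hence both are square integrable, Theorem~\ref{podst} applies, and since $|a_n|^2\le\rho^n$ the series converge $\mu$-almost everywhere by Rademacher--Menshov as well.

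The two finite identities of part~i) are the relations (\ref{01}) and (\ref{02}) for this pair, i.e. the statement that the two triangular connection matrices are mutual inverses, read along the zeroth column. Since each $U_n$ and $h_n$ has the parity of $n$, only even indices survive; setting $n=2m$ and summing over even columns $2p$, relation (\ref{01}) reads $\sum_{p=0}^{m}c_{2m,2p}\bar{c}_{2p,0}=0$ and (\ref{02}) reads $\sum_{p=0}^{m}\bar{c}_{2m,2p}c_{2p,0}=0$, both for $m\ge 1$. In the first sum I substitute the explicit $c_{2m,2p}$ and $\bar{c}_{2p,0}$, reindex $p=m-j$, factor out the $j$-independent constant $q^{m}(1-q)$, and apply the elementary $q$-binomial identity
\[
\QATOPD[ ] {2m-j}{j}_{q}\QATOPD[ ] {2m-2j}{m-j}_{q}=\QATOPD[ ] {m}{j}_{q}\QATOPD[ ] {2m-j}{m}_{q},
\]
which turns it into the first displayed identity. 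In the second sum I substitute $\bar{c}_{2m,2p}$ and $c_{2p,0}$ and factor out $q^{m}$, after which the cancellation $\frac{q^{m-p}-q^{m+p+1}}{1-q^{m+p+1}}=q^{m-p}\frac{1-q^{2p+1}}{1-q^{m+p+1}}$ produces the second displayed identity.

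The difficulty is entirely one of bookkeeping rather than of ideas. The two places that require care are: pinning down the normalizing constant in the Radon--Nikodym derivative so that the leading factor is exactly $(q)_\infty$ (and its reciprocal for (\ref{nah})); and the $q$-Pochhammer simplifications for the finite identities, above all the product-of-binomials identity and the sign-and-$q$-power bookkeeping in the second sum, where a reindexing $p\mapsto m-p$ can replace $(-1)^p q^{\binom{p}{2}}$ by a superficially different but equal factor. Checking one small case ($m=1$ or $m=2$) against the closed forms is the quickest safeguard against an index slip.
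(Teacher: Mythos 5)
Your overall route is exactly the paper's: read off the two connection-coefficient expansions displayed just before the theorem, use $\hat U_n=1$, $\hat h_n=(q)_n$ and the ratio $f_h(x|q)\big/\bigl(\tfrac{2}{\pi}\sqrt{1-x^2}\bigr)=(q)_\infty\prod_{j\ge 1}l(x|q^j)$ in Theorem~\ref{podst} to get (\ref{inU}) and (\ref{nah}), and extract the finite identities from the mutual-inverse relations (\ref{01}), (\ref{02}) restricted to even indices. Your identification of the Radon--Nikodym derivative, the values $c_{2j,0}=(-1)^jq^{\binom{j+1}{2}}$ and $\bar c_{2j,0}=\frac{q^j(1-q)}{1-q^{j+1}}\QATOPD[ ] {2j}{j}_{q}$, the simplification $\QATOPD[ ] {2j}{j}_{q}/(q)_{2j}=1/(q)_j^2$, and the product identity $\QATOPD[ ] {2m-j}{j}_{q}\QATOPD[ ] {2m-2j}{m-j}_{q}=\QATOPD[ ] {m}{j}_{q}\QATOPD[ ] {2m-j}{m}_{q}$ are all correct, and the first finite identity comes out exactly as stated after the reindexing $p=m-j$ and extraction of the factor $q^m(1-q)$.

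The one step that does not go through is the last one for the second finite identity. Substituting $\bar c_{2m,2p}=\frac{q^{m-p}-q^{m+p+1}}{1-q^{m+p+1}}\QATOPD[ ] {2m}{m-p}_{q}$ and $c_{2p,0}=(-1)^pq^{\binom{p+1}{2}}$ into $\sum_{p}\bar c_{2m,2p}c_{2p,0}=0$ and pulling out $q^{m}$ (using $q^{\binom{p+1}{2}}q^{-p}=q^{\binom{p}{2}}$) yields
\[
0=\sum_{k=0}^{m}(-1)^{k}q^{\binom{k}{2}}\QATOPD[ ] {2m}{m-k}_{q}\frac{1-q^{2k+1}}{1-q^{m+k+1}},
\]
with the sign and the power of $q$ attached to $k$, not to $m-k$. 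No reindexing $p\mapsto m-p$ converts this into the printed form, because that substitution would simultaneously send $\QATOPD[ ] {2m}{m-p}_{q}$ to $\QATOPD[ ] {2m}{p}_{q}$ and $\frac{1-q^{2p+1}}{1-q^{m+p+1}}$ to $\frac{1-q^{2m-2p+1}}{1-q^{2m-p+1}}$; the two sums are genuinely different. Indeed at $m=2$ the printed sum equals $q^3-q^2\neq 0$, while the display above vanishes. So your derivation is the correct one and the theorem's second identity as printed contains a misprint (the factor $(-1)^{m-k}q^{\binom{m-k}{2}}$ should read $(-1)^{k}q^{\binom{k}{2}}$); you should record the identity you actually derived rather than trying to force agreement with the printed version. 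Your own suggested safeguard of testing $m=2$ is precisely what exposes this.
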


\begin{proof}
Let us recall (following say \cite{IA} and/or \cite{Andrews1999}) that 
\begin{equation*}
\int_{-1}^{1}U_{n}\left( x\right) U_{m}\left( x\right) \frac{2\sqrt{1-x^{2}}%
}{\pi }dx=%
\begin{cases}
1, & \text{if }m=n\text{;} \\ 
0, & \text{if }m\neq n\text{,}%
\end{cases}%
\end{equation*}%
and also that 
\begin{equation}
\int_{-1}^{1}h_{n}\left( x|q\right) h_{m}\left( x|q\right) f_{h}\left(
x|q\right) dx=%
\begin{cases}
\left( q\right) _{n}, & \text{if }m=n\text{;} \\ 
0, & \text{if }m\neq n\text{.}%
\end{cases}
\label{inth^2}
\end{equation}%
where we denoted 
\begin{equation}
f_{h}\left( x|q\right) =\frac{2\left( q\right) _{\infty }\sqrt{1-x^{2}}}{\pi 
}\prod_{k=1}^{\infty }l\left( x|q^{k}\right) ,  \label{fh}
\end{equation}%
where $l$ is defined by (\ref{lsa}).
\end{proof}

\begin{remark}
In \cite{Szablowski2009} there are presented many particular cases of the
expansion (\ref{inU}). Hence, let us present a particular case of the
expansion (\ref{nah}). Namely, let us take $x\allowbreak =\allowbreak 0$,
then we notice that%
\begin{equation*}
h_{2j}(0|q)\allowbreak =(-1)^{j}\prod_{k=0}^{j-1}(1-q^{1+2k})=(-1)^{j}\left(
q|q^{2}\right) _{j}.
\end{equation*}%
Besides, we can easily notice that 
\begin{equation*}
\prod_{k=1}^{\infty }l\left( 0|q^{k}\right) \allowbreak \allowbreak
=\allowbreak \prod_{k=1}^{\infty }(1+q^{k})^{2}\allowbreak =\allowbreak
\left( -q\right) _{\infty }^{2}.
\end{equation*}
Thus, after cancelling out $2/\pi $ on both sides and dividing both sides by 
$\left( q\right) _{\infty }(-q)_{\infty }^{2}$ and finally noticing that%
\begin{equation*}
\left( q\right) _{\infty }(-q)_{\infty }\allowbreak =\allowbreak \left(
q^{2}|q^{2}\right) _{\infty },
\end{equation*}
we get the following infinite expansion:%
\begin{equation*}
\frac{1}{\left( q^{2}|q^{2}\right) _{\infty }(-q)_{\infty }}=\sum_{j\geq 0}%
\frac{(-q)^{j}(1-q)}{\left( q\right) _{j}^{2}(1-q^{j+1})}\left(
q|q^{2}\right) _{j}.
\end{equation*}
\end{remark}

\begin{remark}
Following Proposition 7.1 of \cite{Kac2002} one can notice that the
so-called Galois number $G_{n}(q)$ (the total number of subspaces of the
vector $\mathbb{F}_{q}^{n}$ over the finite field $\mathbb{F}_{q}$, of
course, for $q$ being a prime number) is equal to $h_{n}(1|q)$. This is so
since three-term recurrence satisfied by the $q-$ Hermite polynomials is%
\begin{equation*}
h_{n}(x|q)=2xh_{n-1}(x|q)+(q^{n-1}-1)h_{n-2}(x|q),
\end{equation*}%
with $h_{0}(x|q)\allowbreak =\allowbreak 1$, $h_{1}(x|q)\allowbreak
=\allowbreak 2x$. On the way let us notice that $l(1|a)\allowbreak
=\allowbreak (1-a)^{2}$, hence we have for all complex $\left\vert
q\right\vert <1$%
\begin{equation*}
\frac{1}{\left( q\right) _{\infty }^{3}}=\sum_{j\geq 0}\frac{(1-q)q^{j}}{%
\left( q\right) _{j}^{2}(1-q^{j+1})}G_{2j}(q).
\end{equation*}
\end{remark}

\subsection{$q-$ultraspherical (Rogers) and $q-$ultraspherical (Rogers) with
different parameters}

$q-$ultraspherical (Rogers) polynomials are more properly called continuous 
\newline
$q-$ultraspherical polynomials and are defined and described in \cite{KLS}%
(Section 14.10.1) and in more detail in \cite{Szab-bAW}(Section 4.3). There
also is presented a formula 4.15 (see also \cite{IA},(13.3.1)) (dating back
to Rogers in the end of 19th century) giving connection coefficients between
two sets of Rogers polynomials with different values of the parameter $\beta
.$

Namely, we have

\begin{equation}
C_{n}\left( x|\gamma ,q\right) \allowbreak =\allowbreak
\sum_{k=0}^{\left\lfloor n/2\right\rfloor }\frac{\beta ^{k}\left( \gamma
/\beta \right) _{k}\left( \gamma \right) _{n-k}\left( 1-\beta
q^{n-2k}\right) }{(q)_{k}\left( \beta q\right) _{n-k}\left( 1-\beta \right) }%
C_{n-2k}\left( x|\beta ,q\right) .  \label{CnaC}
\end{equation}

Again, we can read coefficients $c_{n,j}$ and $\bar{c}_{n,j}$ from (\ref%
{CnaC}). Thus, we have coefficients 
\begin{equation*}
c_{0,j}\allowbreak =\allowbreak \frac{\beta ^{k}\left( \gamma /\beta \right)
_{k}\left( \gamma \right) _{k}}{\left( q\right) _{k}\left( \beta q\right)
_{k}(1-\beta )}~~\text{and~~}\hat{c}_{0,\allowbreak j}\allowbreak
=\allowbreak \frac{\gamma ^{k}\left( \beta /\gamma \right) _{k}\left( \beta
\right) _{k}}{\left( q\right) _{k}\left( \gamma q\right) _{k}(1-\gamma )}
\end{equation*}
for $j\allowbreak =\allowbreak 2k$ and $0$ otherwise. Recall e.g. \cite{IA}%
(13.2.4) or \cite{Szab22}(4.13,4.14) and let us modify slightly $f_{C}$ (by
multiplying by $(1-\beta )$), so that $f_{C}$ integrates to $1$. We get 
\begin{equation}
\int_{-1}^{1}C_{n}\left( x|\beta ,q\right) C_{m}\left( x|\beta ,q\right)
f_{C}\left( x|\beta ,q\right) dx=\left\{ 
\begin{array}{ccc}
0 & \text{if} & m\neq n \\ 
\frac{\left( \beta ^{2}\right) _{n}(1-\beta )}{\left( 1-\beta q^{n}\right)
\left( q\right) _{n}} & \text{if} & m=n%
\end{array}%
\right. ,  \label{Intb^2}
\end{equation}%
where 
\begin{equation}
f_{C}(x|\beta ,q)\allowbreak =\allowbreak \frac{(\beta ^{2})_{\infty }}{%
(\beta ,\beta q)_{\infty }}f_{h}\left( x|q\right) /\prod_{j=0}^{\infty
}l\left( x|\beta q^{j}\right) ,  \label{fC}
\end{equation}%
with, as before, $l(x|a)\allowbreak =\allowbreak (1+a)^{2}-4x^{2}a$. Hence, 
\begin{equation*}
\hat{C}_{n}(\beta ,q)\allowbreak =\allowbreak \frac{\left( \beta ^{2}\right)
_{n}(1-\beta )}{\left( 1-\beta q^{n}\right) \left( q\right) _{n}}.
\end{equation*}
Summarizing we get the following result.

\begin{theorem}
i) For all $n\geq 1$ and complex $\left\vert q\right\vert <1$, $\gamma $ and 
$\beta \notin \left\{ 1,q^{-1},q^{-2},\ldots \right\} :$%
\begin{equation*}
0=\sum_{j=0}^{n}\QATOPD[ ] {n}{j}_{q}\gamma ^{j}\beta ^{n-j}\left( \beta
/\gamma \right) _{j}\left( \gamma /\beta \right) _{n-j}\frac{(1-\beta
q^{2j})\left( \gamma q^{j+1}\right) _{n-1}}{\left( \beta q^{j}\right) _{n+1}}%
.
\end{equation*}

ii) For real $\left\vert x\right\vert <1$, $\left\vert \beta \right\vert <1$%
, $\left\vert \gamma \right\vert <1$, $\left\vert q\right\vert <1:$ 
\begin{eqnarray*}
&&\frac{(\beta q)_{\infty }^{2}\left( \gamma ^{2}\right) _{\infty }}{\left(
\beta ^{2}\right) _{\infty }(\gamma q)_{\infty }^{2}}\prod_{j=0}^{\infty }%
\frac{l\left( x|\beta q^{j}\right) }{l\left( x|\gamma q^{j}\right) } \\
&=&\sum_{n\geq 0}\frac{\beta ^{n}\left( \gamma /\beta \right) _{n}\left(
\gamma \right) _{n}(1-\beta )(1-\gamma q^{2n})\left( q\right) _{2n}}{\left(
q\right) _{n}\left( \beta \right) _{n+1}(1-\gamma )\left( \gamma ^{2}\right)
_{2n}}C_{2n}(x|\gamma ,q).
\end{eqnarray*}
\end{theorem}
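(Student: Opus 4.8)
The plan is to apply the general machinery developed in the introduction directly to the pair of $q$-ultraspherical (Rogers) families $\{C_n(x|\gamma,q)\}$ and $\{C_n(x|\beta,q)\}$, using the connection coefficients read off from the Rogers formula \eqref{CnaC}. The two parts of the theorem are two distinct instances of that machinery: part (i) is an instance of the algebraic orthogonality identity \eqref{01}, while part (ii) is an instance of the basic $L^2$-expansion \eqref{basicEx} of Theorem \ref{podst}.

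For part (i), I would set $\alpha_n = C_n(\cdot|\beta,q)$ and $\beta_n = C_n(\cdot|\gamma,q)$ (in the notation of the introduction), so that \eqref{CnaC} gives the connection coefficients $c_{n,j}$ with the nonzero entries occurring only for $n-j$ even. By the symmetry $\gamma \leftrightarrow \beta$ I can likewise write down the reverse coefficients $\bar c_{n,j}$, which are obtained from \eqref{CnaC} by interchanging the roles of the two parameters. The identity \eqref{01}, namely $0 = \sum_{j=0}^{n}c_{n,j}\bar c_{0,j}$ valid for $n\geq 1$, then becomes precisely a sum over even indices $j=2k$; after reindexing (writing the inner summation index so that the Gauss binomial $\QATOPD[ ] {n}{j}_q$ and the factors $(1-\beta q^{2j})$, $(\gamma q^{j+1})_{n-1}$, $(\beta q^{j})_{n+1}$ appear) and carrying out the routine simplifications of the products of $q$-Pochhammer symbols using \eqref{s1}–\eqref{s2} and \eqref{knk1}–\eqref{knk2}, the stated identity should drop out. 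The main bookkeeping task here is matching the two forms of the coefficients $c_{0,j}$ and $\bar c_{0,j}$ quoted just before \eqref{Intb^2} against the summand in the theorem; the powers $\gamma^j\beta^{n-j}$ and the paired factors $(\beta/\gamma)_j(\gamma/\beta)_{n-j}$ strongly suggest this is exactly \eqref{01} after the substitution $j \mapsto 2k$ is undone.

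For part (ii), the strategy is to compute the ratio of the two orthogonalizing densities and expand it via Theorem \ref{podst}. Using the explicit weight $f_C$ from \eqref{fC}, the ratio $f_C(x|\gamma,q)/f_C(x|\beta,q)$ telescopes: the common factor $f_h(x|q)$ cancels, leaving the constant $\frac{(\beta q)_\infty^2(\gamma^2)_\infty}{(\beta^2)_\infty(\gamma q)_\infty^2}$ (from the ratio of the normalizing $q$-Pochhammer constants) times the infinite product $\prod_{j=0}^{\infty} l(x|\beta q^{j})/l(x|\gamma q^{j})$, which is exactly the left-hand side. By Theorem \ref{podst} this ratio expands as $\sum_{n\geq 0}a_n C_n(x|\gamma,q)$ with $a_n = c_{n,0}(\gamma\to\beta)/\hat C_n(\gamma,q)$, where the connection coefficient $c_{n,0}$ (nonzero only for even order) comes from \eqref{CnaC} read in the direction expanding $C_{2n}(x|\beta,q)$ into $C_{\bullet}(x|\gamma,q)$, and $\hat C_n(\gamma,q) = \frac{(\gamma^2)_n(1-\gamma)}{(1-\gamma q^n)(q)_n}$ from \eqref{Intb^2}. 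Assembling $a_{2n}$ and simplifying the ratio, using $(q)_{2n}$ and $(\gamma^2)_{2n}$ to combine the binomial normalizations, should produce the displayed coefficient $\frac{\beta^n(\gamma/\beta)_n(\gamma)_n(1-\beta)(1-\gamma q^{2n})(q)_{2n}}{(q)_n(\beta)_{n+1}(1-\gamma)(\gamma^2)_{2n}}$.

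The hard part will be the density/convergence justification for part (ii) rather than the algebra. One must verify that $\int (f_C(\cdot|\gamma,q)/f_C(\cdot|\beta,q))^2\, f_C(\cdot|\beta,q)\,dx < \infty$ so that Theorem \ref{podst} applies and the expansion converges in $L^2(d\nu)$; this is where the ranges $|\beta|<1$, $|\gamma|<1$, $|q|<1$ and the boundedness discussion following Theorem \ref{podst} (the bound $|a_n|^2 \leq \rho^n$ guaranteeing even almost-everywhere convergence via Rademacher--Menshov) are needed. I would discharge this by checking that the infinite product $\prod_{j} l(x|\beta q^{j})/l(x|\gamma q^{j})$ is bounded on $[-1,1]$ for these parameter values, so that square-integrability holds automatically as described in the introduction. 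The purely formal manipulation of the weights and coefficients is routine once the cancellation of $f_h$ is observed; it is the analytic square-integrability hypothesis that carries the real content and must be confirmed before invoking Theorem \ref{podst}.
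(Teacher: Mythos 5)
Your overall strategy coincides with the paper's: part (i) is an instance of (\ref{01}) for the Rogers--Rogers pair, with the connection coefficients read off (\ref{CnaC}) (nonzero only when the index gap is even, whence the reindexing $j=2k$) and then simplified via $\left( \beta \right) _{j}/\left( \beta \right) _{n+j+1}=1/\left( \beta q^{j}\right) _{n+1}$ and $\left( \gamma \right) _{n+j}/\left( \gamma \right) _{j+1}=\left( \gamma q^{j+1}\right) _{n-1}$; part (ii) is the expansion (\ref{basicEx}) applied to the ratio of the two weights $f_{C}$, with constants assembled from (\ref{fC}) and (\ref{Intb^2}). Your plan for part (i) is exactly the paper's argument and is fine.

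In part (ii), however, you have the expansion pointed the wrong way, and carried out literally your recipe does not produce the displayed coefficient. Theorem \ref{podst} expands $d\mu /d\nu $ in the polynomials orthogonal with respect to $\nu $, the \emph{denominator} measure; since the series in (ii) runs over $C_{2n}(x|\gamma ,q)$, the ratio to expand is $f_{C}(x|\beta ,q)/f_{C}(x|\gamma ,q)$ --- the paper indeed writes $f_{C}(x|\beta ,q)=f_{C}(x|\gamma ,q)\sum (\cdots )$ --- and the relevant $c_{2n,0}$ is the constant term of $C_{2n}(x|\gamma ,q)$ in the basis $\left\{ C_{k}(x|\beta ,q)\right\} $, i.e.\ formula (\ref{CnaC}) read exactly as printed, giving $c_{2n,0}=\beta ^{n}\left( \gamma /\beta \right) _{n}\left( \gamma \right) _{n}/\left( (q)_{n}(\beta q)_{n}\right) $; dividing by $\hat{C}_{2n}(\gamma ,q)$ and using $(\beta q)_{n}=\left( \beta \right) _{n+1}/(1-\beta )$ yields precisely the stated coefficient. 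You instead expand $f_{C}(\gamma )/f_{C}(\beta )$ and take $c_{n,0}$ from (\ref{CnaC}) with the roles of $\beta $ and $\gamma $ interchanged; that combination gives the companion identity with the two parameters swapped, not the one stated. Relatedly, check your claim that the telescoped ratio ``is exactly the left-hand side'': a direct computation gives $f_{C}(x|\beta ,q)/f_{C}(x|\gamma ,q)=\frac{(1-\gamma )(\beta ^{2})_{\infty }(\gamma q)_{\infty }^{2}}{(1-\beta )(\beta q)_{\infty }^{2}(\gamma ^{2})_{\infty }}\prod_{j}l(x|\gamma q^{j})/l(x|\beta q^{j})$, which is, up to the factor $(1-\gamma )/(1-\beta )$, the \emph{reciprocal} of the printed left-hand side, so the orientation of the infinite product and the normalizing constant must be reconciled with the paper's final multiplication step rather than assumed. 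Finally, the square-integrability you single out as the hard part is actually automatic here: for $\left\vert \beta \right\vert ,\left\vert \gamma \right\vert ,\left\vert q\right\vert <1$ one has $l(x|a)\geq (1-\left\vert a\right\vert )^{2}>0$ on $[-1,1]$, so both weights are bounded above and below and the hypothesis of Theorem \ref{podst} holds exactly as the introductory remarks indicate; the real content of the proof is the coefficient bookkeeping you deferred.
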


\begin{proof}
After applying the idea of expansion presented in the introduction, we get%
\begin{equation*}
f_{C}(x|\beta ,q)=f_{C}(x|\gamma ,q)\sum_{n\geq 0}\frac{\beta ^{n}\left(
\gamma /\beta \right) _{n}\left( \gamma \right) _{n}(1-\beta )(1-\gamma
q^{2n})\left( q\right) _{2n}}{\left( q\right) _{n}\left( \beta \right)
_{n+1}(1-\gamma )\left( \gamma ^{2}\right) _{2n}}C_{2n}(x|\gamma ,q).
\end{equation*}%
Now we cancel out $f_{h}$ on both sides and multiply both sides by%
\begin{equation*}
(1-\gamma )(\beta ,\beta q)_{\infty }\prod_{j=0}^{\infty }l\left( x|\beta
q^{j}\right) /(1-\beta )\left( \beta ^{2}\right) _{\infty },
\end{equation*}%
we get ii). To get i) we apply the idea behind (\ref{01}), use the
simplifying ratios $\left( \beta \right) _{j}/\left( \beta \right)
_{n+j+1}\allowbreak =\allowbreak 1/\left( \beta q^{j}\right) _{n+1}$, $%
\left( \gamma \right) _{n+j}/\left( \gamma \right) _{j+1}\allowbreak
=\allowbreak \left( \gamma q^{j+1}\right) _{n-1}$ and finally multiply both
sides by $\left( q\right) _{n}$. The fact that the identity has a form of
rational function, we can extend the range of unknowns $\beta $ and $\gamma $
with additional condition that expression $1-\beta q^{j}$ is not equal zero
for all $j=0,\ldots $.
\end{proof}

\subsection{$q-$Hermite and $q-$ultraspherical (Rogers)}

This is a particular case of the previous subsection. However, we consider
it separately because of the importance of the $q-$Hermite polynomials.

We have the following result.

\begin{theorem}
i) For all $n\geq 1$ and complex $q$ and $\beta \notin \left\{
1,q^{-1},\ldots \right\} :$%
\begin{eqnarray}
0 &=&\sum_{k=0}^{n}(-1)^{k}q^{\binom{k}{2}}\QATOPD[ ] {n}{k}_{q}\left( \beta
q^{n-k}\right) _{n-1},  \label{1p1} \\
0 &=&\sum_{k=0}^{n}(-1)^{k}q^{\binom{k}{2}}\QATOPD[ ] {n}{k}_{q}\frac{%
(1-\beta q^{2k})}{\left( \beta q^{k}\right) _{n+1}},  \label{1p2} \\
0 &=&\sum_{j=0}^{n}\QATOPD[ ] {n}{j}_{q}\left( \beta \right) _{j}\beta
^{n-j}\left( \beta ^{-1}\right) _{n-j}.  \label{1p3}
\end{eqnarray}

ii) For all $\left\vert \beta \right\vert ,\left\vert q\right\vert <1:$ 
\begin{eqnarray*}
\frac{(\beta ,\beta q)_{\infty }\left( -\beta \right) _{\infty }^{2}}{\left(
\beta ^{2}\right) _{\infty }} &=&\sum_{n\geq 0}(\beta )^{n}q^{\binom{n}{2}}%
\frac{\left( \beta \right) _{n}(1-\beta q^{2n})\left( q\right) _{2n}(\beta
^{2}|q^{2})_{n}}{\left( q\right) _{n}\left( \beta ^{2}\right) _{2n}(1-\beta
)(q^{2}|q^{2})_{n}}, \\
\frac{\left( \beta ^{2}\right) _{\infty }}{(\beta ,\beta q)_{\infty }\left(
-\beta \right) _{\infty }^{2}} &=&\sum_{n\geq 0}(-\beta )^{n}\frac{(1-\beta
)\left( q|q^{2}\right) _{n}}{\left( q\right) _{n}\left( \beta \right) _{n+1}}%
.
\end{eqnarray*}
\end{theorem}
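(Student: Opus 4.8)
The plan is to treat this pair exactly as in the preceding subsection, exploiting that the continuous $q$-Hermite polynomials are the degeneration of the Rogers polynomials at parameter $0$. Comparing generating functions gives $C_{n}(x|0,q)=h_{n}(x|q)/\left( q\right) _{n}$, so I would first extract the two connection-coefficient arrays by letting a parameter tend to $0$ in (\ref{CnaC}). Keeping the inner polynomials and sending $\beta \to 0$ (using $\beta ^{k}\left( \gamma /\beta \right) _{k}\to \left( -\gamma \right) ^{k}q^{\binom{k}{2}}$, while $\left( 1-\beta q^{n-2k}\right) /(1-\beta )\to 1$ and $\left( \beta q\right) _{n-k}\to 1$) produces the expansion of $C_{n}(x|\gamma ,q)$ in the $q$-Hermite basis. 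Keeping the inner Rogers polynomials and instead sending the outer $\gamma \to 0$ (together with $C_{n}(x|\gamma ,q)\to h_{n}(x|q)/\left( q\right) _{n}$) produces the expansion of $h_{n}(x|q)$ in the Rogers basis. These two parity-preserving lower-triangular arrays are the $\left\{ c_{n,k}\right\}$ and $\left\{ \bar{c}_{n,k}\right\}$ of (\ref{con1}) and (\ref{con2}). I would also record the two squared norms $\hat{h}_{n}=\left( q\right) _{n}$ and $\hat{C}_{n}(\beta ,q)=\frac{\left( \beta ^{2}\right) _{n}(1-\beta )}{\left( 1-\beta q^{n}\right) \left( q\right) _{n}}$ coming from the orthogonality of the two families.

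For part ii) I would apply Theorem \ref{podst} to the two density ratios read off from (\ref{fC}): $f_{h}/f_{C}=\frac{\left( \beta ,\beta q\right) _{\infty }}{\left( \beta ^{2}\right) _{\infty }}\prod_{j\geq 0}l(x|\beta q^{j})$ and its reciprocal. The coefficients $a_{n}=c_{n,0}/\hat{\beta}_{n}$ are obtained directly from the two arrays of the previous paragraph (only even indices survive, since the lowest polynomial is a constant), giving a full expansion in $\left\{ C_{2m}(x|\beta ,q)\right\}$ in one case and in $\left\{ h_{2m}(x|q)\right\}$ in the other. I would then specialise both to $x=0$: since $l(0|a)=(1+a)^{2}$ one gets $\prod_{j\geq 0}l(0|\beta q^{j})=\left( -\beta \right) _{\infty }^{2}$, reproducing the constant left-hand sides, while on the right I insert $h_{2m}(0|q)=(-1)^{m}\left( q|q^{2}\right) _{m}$ (already used in the earlier remark) and $C_{2m}(0|\beta ,q)=(-1)^{m}\left( \beta ^{2}|q^{2}\right) _{m}/\left( q^{2}|q^{2}\right) _{m}$.

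For part i) I would use the two orthogonality relations (\ref{01}) and (\ref{02}), i.e. the statement that the two connection arrays are mutually inverse read along the bottom column. Substituting the explicit even-index coefficients and cleaning the resulting ratios of $q$-Pochhammer symbols with (\ref{s1})--(\ref{s4}), (\ref{knk1}), (\ref{knk2}) yields (\ref{1p2}) from (\ref{02}) and (\ref{1p1}) from (\ref{01}); (\ref{1p3}) then follows after rewriting one factor via the reversal identity $\beta ^{m}\left( \beta ^{-1}\right) _{m}=(-1)^{m}q^{\binom{m}{2}}\left( \beta q^{1-m}\right) _{m}$. A robust independent check on (\ref{1p1}) is that its summand $\left( \beta q^{n-k}\right) _{n-1}$ is a polynomial of degree $n-1$ in $q^{-k}$, while $\sum_{k=0}^{n}(-1)^{k}q^{\binom{k}{2}}\binom{n}{k}_{q}q^{-mk}=\left( q^{-m}\right) _{n}=0$ for $0\leq m\leq n-1$ by (\ref{obinT}); hence the alternating $q$-binomial transform annihilates it automatically, which also explains why the exact power of $q$ in the base is immaterial.

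The main obstacle I anticipate is bookkeeping rather than concept. The two degenerate limits must be taken carefully because $\left( \gamma /\beta \right) _{k}$ blows up while $\beta ^{k}$ vanishes, and pinning down the surviving sign $(-1)^{k}$ and weight $q^{\binom{k}{2}}$ is the crux. Equally, converting the raw expansions of part ii) into the stated closed forms rests on the value $C_{2m}(0|\beta ,q)=(-1)^{m}\left( \beta ^{2}|q^{2}\right) _{m}/\left( q^{2}|q^{2}\right) _{m}$, which itself must be established (for instance by evaluating the $q$-Hermite expansion of $C_{2m}$ at $x=0$ and summing). The square-integrability hypotheses of Theorem \ref{podst} are immediate, since for $\left\vert \beta \right\vert <1$ both $\prod_{j\geq 0}l(x|\beta q^{j})$ and its reciprocal are bounded on $[-1,1]$.
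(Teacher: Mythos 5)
Your treatment of (\ref{1p1}), (\ref{1p2}) and of part ii) coincides with the paper's: the two connection-coefficient arrays are extracted from (\ref{CnaC}) by letting $\gamma \to 0$ (resp.\ $\beta \to 0$), with exactly the limit $\beta^{k}(\gamma/\beta)_{k}\to(-\gamma)^{k}q^{\binom{k}{2}}$ you identify as the crux; the finite identities then come from the biorthogonality (\ref{01})--(\ref{02}), and the infinite ones from Theorem \ref{podst} specialised at $x=0$ using $h_{2m}(0|q)=(-1)^{m}(q|q^{2})_{m}$, $C_{2m}(0|\beta,q)=(-1)^{m}(\beta^{2}|q^{2})_{m}/(q^{2}|q^{2})_{m}$ and $\prod_{j\geq 0}l(0|\beta q^{j})=(-\beta)_{\infty}^{2}$. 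Where you genuinely diverge is (\ref{1p3}): the paper obtains it from the product formula $0=\sum_{j=0}^{n}C_{j}(x|\beta,q)\,\beta^{n-j}C_{n-j}(x|\beta^{-1},q)$ of \cite{Szab-bAW} evaluated at $x=0$, followed by the replacement $q^{2}\to q$, $\beta^{2}\to\beta$; you instead deduce it from (\ref{1p1}) by reversing the summation index, applying $\beta^{m}(\beta^{-1})_{m}=(-1)^{m}q^{\binom{m}{2}}(\beta q^{1-m})_{m}$ and absorbing $(\beta)_{n-k}(\beta q^{1-k})_{k}=(1-\beta)(\beta q^{1-k})_{n-1}$, which reduces (\ref{1p3}) to (\ref{1p1}) with $\beta$ replaced by $\beta q^{1-n}$ --- legitimate precisely because, as you note, (\ref{1p1}) is insensitive to the power of $q$ sitting in the base. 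Your route is self-contained and avoids the appeal to the Al-Salam--Chihara product formula, at the price of not exhibiting that structural link. Finally, your observation that $(\beta q^{n-k})_{n-1}$ is a polynomial of degree $n-1$ in $q^{-k}$, so that the alternating $q$-binomial transform kills it term by term via $(q^{-m})_{n}=0$ for $0\leq m\leq n-1$ (a consequence of (\ref{obinT})), is in fact a complete elementary proof of (\ref{1p1}), more direct than the connection-coefficient derivation the paper uses.
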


\begin{proof}
Setting once $\gamma \allowbreak =\allowbreak 0$ with any $\left\vert \beta
\right\vert <1$ and then $\beta \allowbreak =\allowbreak 0$ and any $%
\left\vert \gamma \right\vert <1$ in (\ref{CnaC}) we end up with
coefficients 
\begin{equation*}
c_{0,j}=q^{\binom{k}{2}}\frac{(-\beta )^{k}\left( \beta \right) _{k}}{\left(
q\right) _{k}}~\text{and~}\hat{c}_{0,j}=\frac{\beta ^{k}\left( q\right)
_{2k}\left( 1-\beta \right) }{\left( q\right) _{k}\left( \beta \right) _{k+1}%
},
\end{equation*}
for $j\allowbreak =\allowbreak 2k$ and $0$ otherwise. From these two
expansions follow (\ref{1p1}) and (\ref{1p2}). In order to get (\ref{1p3})
let us recall also Proposition 3.1 of \cite{Szab-bAW}. Keeping in mind
formula (\ref{pC}), definition (3.5) of \cite{Szab-bAW} and its assertions
i) and iv) we conclude that for $\beta \neq 0$ and $q\neq 0$ we get 
\begin{equation}
0=\sum_{j=0}^{n}C_{j}(x|\beta ,q)\beta ^{n-j}C_{n-j}(x|\beta
^{-1},q)=\sum_{j=0}^{n}C_{j}(x|\beta ,q)q^{-n+j}C_{n-j}(x|\beta ,q^{-1}).
\label{CC}
\end{equation}%
Now, setting $x\allowbreak =\allowbreak 0$ in (\ref{CC}) we get after
multiplying both sides by $\left( q^{2}|q^{2}\right) _{n}$ and canceling out 
$(-1)^{n}$%
\begin{eqnarray*}
0\allowbreak &=&\allowbreak \sum_{j=0}^{n}\QATOPD[ ] {n}{j}_{q^{2}}\left(
\beta ^{2}|q^{2}\right) _{j}\beta ^{2(n-j)}\left( \beta ^{-2}|q^{2}\right)
_{n-j}, \\
0 &=&\sum_{j=0}^{n}\frac{\left( q^{2}|q^{2}\right) _{n}}{\left(
q^{2}|q^{2}\right) _{j}\left( q^{-2}|q^{-2}\right) _{n-j}}\left( \beta
^{2}|q^{2}\right) _{j}q^{-2n+2j}\left( \beta ^{2}|q^{-2}\right) _{n-j}.
\end{eqnarray*}%
In the first and in the second of these identities, we simply replace $q^{2}$
by $q$ and $\beta ^{2}$ by $\beta $ and also in the second we apply
well-known formula 
\begin{equation*}
\left( a|q^{-1}\right) \allowbreak =\allowbreak (-a)^{n}q^{-\binom{n}{2}%
}\left( a^{-1}\right) _{n}
\end{equation*}
getting in both cases (\ref{1p3}).

In order to get ii) we recall (\ref{Intb^2}) and get:%
\begin{eqnarray*}
f_{h}(x|q) &=&f_{C}(x|\beta ,q)\sum_{n\geq 0}(-\beta )^{n}q^{\binom{n}{2}}%
\frac{\left( \beta \right) _{n}(1-\beta q^{2n})\left( q\right) _{2n}}{\left(
q\right) _{n}\left( \beta ^{2}\right) _{2n}(1-\beta )}C_{2n}(x|\beta ,q), \\
f_{C}(x|\beta ,q) &=&f_{h}(x|q)\sum_{n\geq 0}\beta ^{n}\frac{h_{2n}(x|q)}{%
\left( q\right) _{n}\left( \beta q\right) _{n}}.
\end{eqnarray*}

Now, let us recall that 
\begin{equation*}
h_{2j}(0|q)\allowbreak =\allowbreak (-1)^{j}\left( q|q^{2}\right) _{j}
\end{equation*}%
and (following three-term recurrence satisfied by the polynomials $\left\{
C_{n}\right\} $, presented, e.g., in \cite{IA}) we have 
\begin{equation*}
C_{2n}(0|\beta ,q)\allowbreak =\allowbreak (-1)^{n}\frac{\left( \beta
^{2}|q^{2}\right) _{n}}{\left( q^{2}|q^{2}\right) _{n}}
\end{equation*}%
and finally noticing that 
\begin{equation*}
\prod_{j=0}^{\infty }l(0|\beta q^{j})\allowbreak =\allowbreak \left( -\beta
\right) _{\infty }^{2},
\end{equation*}%
we get after canceling out $f_{h}$ on both sides. This formula can be
slightly more simplified using the fact that 
\begin{equation*}
\left( \alpha \right) _{m}\left( -\alpha \right) _{m}\allowbreak
=\allowbreak \left( \alpha ^{2}|q^{2}\right) _{m}.
\end{equation*}
\end{proof}

\begin{remark}
There exist other expansions involving$\ $Rogers and $q-$Hermite
polynomials. They can be derived from the relationship between the so-called
Al-Salam-Chihara (ASC) polynomials considered for complex conjugate
parameters and the $q-$ultra\allowbreak spherical polynomials. Namely, we
have 
\begin{equation}
p_{n}(x|x,\beta ,q)=\left( q\right) _{n}C_{n}(x|\beta ,q),  \label{pC}
\end{equation}%
where $\left\{ p_{n}(x|y,\beta ,q)\right\} $ are the ASC polynomial, defined
say in \cite{IA},\cite{KLS} but considered and analyzed in more details for
complex conjugate parameters in \cite{Szab-bAW}(sec.3) satisfying three-term
recurrence given by (3.2) in \cite{Szab-bAW}. Now following formulae from
Lemma 3.1 of \cite{Szab-bAW}, we end up with the following relationships:%
\begin{eqnarray}
h_{n}(x|q)/(q)_{n} &=&\sum_{j=0}^{n}C_{j}(x|\beta ,q)\beta
^{n-j}h_{n-j}(x|q)/\left( q\right) _{n-j},  \label{hC} \\
\left( q\right) _{n}C_{n}(x|\beta ,q) &=&\sum_{j=0}^{n}\QATOPD[ ] {n}{j}%
_{q}h_{j}(x|q)\beta ^{n-j}b_{n-j}(x|q),  \label{Ch}
\end{eqnarray}%
where $\left\{ b_{j}(x|q)\right\} $ are some auxiliary polynomials related
to $q-$Hermite polynomials by formula given in \cite{Szab-bAW}(Lemma 3.1 i))
(see also (\ref{bn}), below)).

Now recall, that%
\begin{eqnarray*}
C_{2n}(0|\beta ,q)\allowbreak &=&\allowbreak (-1)^{n}\frac{\left( \beta
^{2}|q^{2}\right) _{n}}{\left( q^{2}|q^{2}\right) _{n}},~h_{2j}(0|q)%
\allowbreak =\allowbreak (-1)^{j}\left( q|q^{2}\right) _{j}, \\
\text{and }b_{2n}(0|q)\allowbreak &=&\allowbreak q^{n(n-1)}\left(
q|q^{2}\right) _{n}.
\end{eqnarray*}%
After setting these values into (\ref{hC}) and (\ref{Ch}), we get%
\begin{eqnarray*}
\frac{\left( q|q^{2}\right) _{n}}{(q)_{2n}} &=&\sum_{j=0}^{n}\frac{\left(
\beta ^{2}|q^{2}\right) _{j}}{\left( q^{2}|q^{2}\right) _{j}}\beta ^{2(n-j)}%
\frac{\left( q|q^{2}\right) _{n-j}}{(q)_{2(n-j)}}, \\
\left( q\right) _{2n}(-1)^{n}\frac{\left( \beta ^{2}|q^{2}\right) _{n}}{%
\left( q^{2}|q^{2}\right) _{n}} &=&\sum_{j=0}^{n}\QATOPD[ ] {2n}{2j}%
_{q}(-1)^{j}\left( q|q^{2}\right) _{j}\beta ^{2(n-j)}q^{(n-j)(n-j-1)}\left(
q|q^{2}\right) _{n-j}.
\end{eqnarray*}%
These identities can be easily simplified to the known ones, using the
well-known property 
\begin{equation*}
\left( q\right) _{2n}\allowbreak =\allowbreak \left( q|q^{2}\right)
_{n}\left( q^{2}|q^{2}\right) _{n}
\end{equation*}%
and replacing $\beta ^{2}$ by $\rho $ and $q^{2}$ by $q:$
\end{remark}

\begin{eqnarray*}
1 &=&\sum_{j=0}^{n}\QATOPD[ ] {n}{j}_{q}\left( \rho \right) _{j}\rho ^{n-j},
\\
(\rho )_{n} &=&\sum_{j=0}^{n}\QATOPD[ ] {n}{j}_{q}(-1)^{j}q^{\binom{j}{2}%
}\rho ^{j}.
\end{eqnarray*}

\begin{remark}
Identity (\ref{1p3}) is a particular case of the identity from Exercise
1.3(i) of \cite{GR04}, we take $a\allowbreak =\allowbreak \beta $ and $%
b\allowbreak =\allowbreak \beta .$
\end{remark}

\subsection{ $q-$ultraspherical (Rogers) and Chebyshev of the second kind}

Let us recall that $C_{n}(x|q,q)\allowbreak =\allowbreak U_{n}(x)$. Thus,
using the formula (\ref{CnaC}) we deduce that we have 
\begin{eqnarray*}
c_{j,0} &=&q^{k}\frac{\left( \beta /q\right) _{k}\left( \beta \right)
_{k}(1-q)}{\left( q\right) _{k}\left( q\right) _{k+1}}, \\
\bar{c}_{j,0} &=&\frac{\beta ^{k}\left( q/\beta \right) _{k}\left( 1-\beta
\right) }{\left( \beta \right) _{k+1}},
\end{eqnarray*}%
for $j\allowbreak =\allowbreak 2k$ and $0$ otherwise. The following result
follows from these two expansions and the formulae for the densities that
make Rogers and Chebyshev polynomials orthogonal.

\begin{theorem}
i) For all $n\geq 1$ complex $q\neq 1$ and $\beta \notin \left\{
1,q^{-1},\ldots \right\} :$%
\begin{eqnarray*}
0 &=&\sum_{k=0}^{n}\QATOPD[ ] {n}{k}_{q}\QATOPD[ ] {n+k}{k}_{q}q^{k}\beta
^{n-k}\left( \beta /q\right) _{k}\left( q/\beta \right) _{n-k}\frac{(1-\beta
q^{2k})}{\left( \beta q^{k}\right) _{n+1}(1-q^{k+1})}, \\
0 &=&\sum_{k=0}^{n}\QATOPD[ ] {2n+1}{n-k}_{q}\beta ^{k}q^{n-k}\left( q/\beta
\right) _{k}\left( \beta /q\right) _{n-k}(1-q^{2k+1})\left( \beta
q^{k+1}\right) _{n-1}.
\end{eqnarray*}

ii) For $\left\vert q\right\vert ,\left\vert \beta \right\vert <1$ 
\begin{eqnarray}
\frac{\left( q\beta ^{2}|q^{2}\right) _{\infty }(-q)_{\infty }(q^{2}|q^{2})}{%
\left( \beta ^{2}|q^{2}\right) _{\infty }} &=&\sum_{n\geq 0}(-1)^{n}\frac{%
\beta ^{n}\left( q/\beta \right) _{n}}{\left( \beta \right) _{n+1}},  \notag
\\
\frac{\left( \beta ^{2}|q^{2}\right) _{\infty }(1-\beta )^{2}}{\left( q\beta
^{2}|q^{2}\right) _{\infty }(-q)_{\infty }(q^{2}|q^{2})\left( 1-q\right) }
&=&\sum_{n\geq 0}(-1)^{n}\frac{q^{n}\left( \beta /q\right) _{n}\left( \beta
\right) _{n}\left( q|q^{2}\right) _{n}(1-\beta q^{2n})}{\left( q\right)
_{n}\left( q\right) _{n+1}\left( \beta ^{2}q|q^{2}\right) _{n}},  \notag \\
\frac{(\beta ^{2})_{\infty }(q)_{\infty }^{3}}{\left( \beta \right) _{\infty
}^{4}} &=&\sum_{n\geq 0}(2n+1)\frac{\beta ^{n}\left( q/\beta \right) _{n}}{%
\left( \beta \right) _{n+1}}.  \label{nice}
\end{eqnarray}
\end{theorem}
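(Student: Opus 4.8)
The plan is to treat this pair by the same two-step template used in the preceding subsections. First I would identify the two families as $\alpha _{n}=U_{n}$ and $\beta _{n}=C_{n}(x|\beta ,q)$ and extract both sets of connection coefficients from the single Rogers formula (\ref{CnaC}), using $U_{n}=C_{n}(x|q,q)$. Reading (\ref{CnaC}) with its base index (the one written $\beta$ there) set to $q$ expands $C_{n}(x|\beta ,q)$ into $\{U_{n-2k}\}$, while reading it with its upper index set to $q$ expands $U_{n}=C_{n}(x|q,q)$ into $\{C_{n-2k}(x|\beta ,q)\}$. The constant-term values of these two expansions are exactly the sequences $c_{0,2m}$ and $\hat{c}_{0,2m}$ already displayed just before the statement, and the two lower-triangular matrices they fill are inverse to one another. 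Everything in the theorem is then an application of the two general mechanisms of the Introduction.

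Part i) is nothing but the vanishing of an off-diagonal entry of each of the two products of these inverse matrices, i.e. the relations (\ref{01})--(\ref{02}) restricted to the constant column. Taking row $2n$ and column $0$ of $U\to C\to U$ gives $0=\sum_{m}(\text{coeff.\ of }C_{2m}\text{ in }U_{2n})(\text{coeff.\ of }U_{0}\text{ in }C_{2m})$, and the opposite order gives the companion identity; these are the two displays of part i). The only genuine work is cosmetic: collapsing the resulting ratios of $q$-Pochhammer symbols into the $q$-binomials $\QATOPD[ ] {n}{k}_{q}\QATOPD[ ] {n+k+1}{k+1}_{q}$ and $\QATOPD[ ] {2n+1}{n-k}_{q}$ of the statement, for which the splitting rules (\ref{s1})--(\ref{s2}) and the collapsing identities (\ref{knk1})--(\ref{knk2}) are exactly designed. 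Overall factors that do not depend on the summation index (I expect a stray $(1-q)$ in the first identity and a $(q)_{2n+1}/(1-\beta )$ in the second) can simply be pulled out and dropped, since each sum vanishes.

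For part ii) I would apply Theorem \ref{podst} to $f_{C}(x|\beta ,q)$ of (\ref{fC}) and the Chebyshev weight $f_{U}(x)=2\sqrt{1-x^{2}}/\pi =f_{C}(x|q,q)$. The forward ratio yields $f_{C}(x|\beta ,q)/f_{U}(x)=\sum_{m}\hat{c}_{0,2m}U_{2m}(x)$ (here $\hat{U}_{n}=1$), and the reverse ratio yields $f_{U}(x)/f_{C}(x|\beta ,q)=\sum_{m}(c_{0,2m}/\hat{C}_{2m})C_{2m}(x|\beta ,q)$, with $\hat{C}_{2m}$ read off from (\ref{Intb^2}). Because the $\sqrt{1-x^{2}}$ factors cancel, each ratio is, up to a constant, the analytic function $\prod_{k\geq 1}l(x|q^{k})/\prod_{j\geq 0}l(x|\beta q^{j})$ with $l$ as in (\ref{lsa}), and I would evaluate the two expansions at the two points where $l$ degenerates: $x=0$, where $l(0|a)=(1+a)^{2}$, $U_{2m}(0)=(-1)^{m}$ and $C_{2m}(0|\beta ,q)=(-1)^{m}(\beta ^{2}|q^{2})_{m}/(q^{2}|q^{2})_{m}$; and $x=1$, where $l(1|a)=(1-a)^{2}$ and $U_{2m}(1)=2m+1$. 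Then $x=0$ of the forward ratio gives the first identity, $x=1$ of the forward ratio gives the third (the weight $2n+1$ is precisely $U_{2n}(1)$), and $x=0$ of the reverse ratio gives the second.

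The closed products on the left then emerge after turning every $(1\pm aq^{j})$ product at $x\in \{0,1\}$ into a $q$-Pochhammer symbol and applying the Euler/Gauss factorizations $(a^{2}|q^{2})_{\infty }=(a)_{\infty }(-a)_{\infty }$ and $(a)_{\infty }=(a|q^{2})_{\infty }(aq|q^{2})_{\infty }$ from (\ref{s3})--(\ref{s4}). This collapsing is the one genuinely error-prone step and the place where I would be most careful, in particular tracking the powers of $(1-\beta )$ introduced by the normalizing constant of $f_{C}$ and by $(\beta )_{m+1}=(1-\beta )(\beta q)_{m}$. The remaining analytic point is the legitimacy of evaluating the mean-square expansions pointwise at the endpoints: since $|\hat{c}_{0,2m}|$ and $|c_{0,2m}/\hat{C}_{2m}|$ decay geometrically (in $\beta$ and $q$ respectively) while $U_{2m}$ grows only linearly at $x=1$, each series converges absolutely and uniformly on $[-1,1]$, so term-by-term evaluation is justified, and the resulting identities, being rational in the symbols, extend to all complex $\beta ,q$ by analytic continuation.
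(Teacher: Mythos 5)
Your proposal is correct and follows essentially the same route as the paper: both sets of connection coefficients are read off from the single Rogers formula (\ref{CnaC}) by specializing one parameter to $q$, part i) comes from the inverse-matrix relations (\ref{01})--(\ref{02}) on the constant column, and part ii) comes from the two density-ratio expansions evaluated at $x=0$ and $x=1$ with $U_{2n}(0)=(-1)^{n}$, $U_{2n}(1)=2n+1$, $C_{2n}(0|\beta ,q)=(-1)^{n}(\beta ^{2}|q^{2})_{n}/(q^{2}|q^{2})_{n}$. Your added remark on absolute and uniform convergence justifying pointwise evaluation at the endpoints is a detail the paper passes over silently, and is a welcome strengthening rather than a deviation.
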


\begin{proof}
i) Using (\ref{CnaC}), as before, we get the following two finite expansions:%
\begin{eqnarray*}
U_{n}(x) &=&\sum_{k=0}^{\left\lfloor n/2\right\rfloor }\beta ^{k}\frac{%
\left( q/\beta \right) _{k}(q)_{n-k}(1-\beta q^{n-2k)}}{(q)_{k}(\beta
)_{n-k+1}}C_{n-2k}(x|\beta ,q), \\
C_{n}(x|\beta ,q) &=&\sum_{k=0}^{\left\lfloor n/2\right\rfloor }q^{k}\frac{%
(\beta /q)_{k}(\beta )_{n-k}(1-q^{n-2k+1})}{\left( q\right) _{k}(q)_{n-k+1}}%
U_{n-2k}(x).
\end{eqnarray*}

Hence, coefficients $c_{0,j}$ and $\hat{c}_{0,j}$ are equal to $q^{k}\frac{%
\left( \beta /q\right) _{k}\left( \beta \right) _{k}(1-q)}{\left( q\right)
_{k}\left( q\right) _{k+1}}$ and $\frac{\beta ^{k}\left( q/\beta \right)
_{k}\left( q\right) _{k}\left( 1-\beta \right) }{\left( q\right) _{k}\left(
\beta \right) _{k+1}}$ for $j\allowbreak =\allowbreak 2k$ and $0$ otherwise.
The following two identities true for all $n\geq 1$, follow from these two
expansions $n\geq 1:$ 
\begin{eqnarray*}
0 &=&\sum_{k=0}^{n}q^{k}\beta ^{n-k}\frac{\left( \beta /q\right) _{k}\left(
q/\beta \right) _{n-k}\left( \beta \right) _{k}\left( q\right)
_{n+k}(1-\beta q^{2k})}{\left( q\right) _{k}(q)_{k+1}\left( q\right)
_{n-k}\left( \beta \right) _{n+k+1}}, \\
0 &=&\sum_{k=0}^{n}\beta ^{k}q^{n-k}\frac{\left( q/\beta \right) _{k}\left(
\beta /q\right) _{n-k}\left( \beta \right) _{n+k}(1-q^{2k+1})}{\left(
q\right) _{n+k+1}\left( q\right) _{n-k}\left( \beta \right) _{k+1}}.
\end{eqnarray*}

Now, we simplify it to i).

ii) Now let us recall (\ref{Intb^2}) and the fact that $\hat{U}%
_{n}\allowbreak =\allowbreak 1$, we get 
\begin{eqnarray*}
f_{C}(x|\beta ,q) &=&\frac{2}{\pi }\sqrt{1-x^{2}}\sum_{n\geq 0}\frac{\beta
^{n}\left( q/\beta \right) _{n}\left( 1-\beta \right) }{\left( \beta \right)
_{n+1}}U_{2n}(x), \\
\frac{2}{\pi }\sqrt{1-x^{2}} &=&f_{C}(x|\beta ,q)\sum_{n\geq 0}q^{n}\frac{%
\left( \beta /q\right) _{n}\left( \beta \right) _{n}(1-q)\left( q\right)
_{2n}(1-\beta q^{2n})}{\left( q\right) _{n}\left( q\right) _{n+1}(1-\beta
)\left( \beta ^{2}\right) _{2n}}C_{2n}(x|\beta ,q).
\end{eqnarray*}%
Let us recall that 
\begin{equation*}
\frac{f_{C}(x|\beta ,q)}{\frac{2}{\pi }\sqrt{1-x^{2}}}=\frac{(\beta
^{2})_{\infty }\left( q\right) _{\infty }}{(\beta ,\beta q)_{\infty }}%
\prod_{j=0}^{\infty }\frac{l\left( x|q^{j}\right) }{l\left( x|\beta
q^{j}\right) }.
\end{equation*}

Now we cancel out $\frac{2}{\pi }\sqrt{1-x^{2}}$ on both sides and set, say $%
x=0$. We get 
\begin{equation*}
U_{2n}(0)\allowbreak =\allowbreak (-1)^{n},C_{2n}(0|\beta ,q)\allowbreak
=\allowbreak (-1)^{n}\frac{\left( \beta ^{2}|q^{2}\right) _{n}}{\left(
q^{2}|q^{2}\right) _{n}},
\end{equation*}%
\begin{eqnarray*}
f_{C}(0|\beta ,q)/(2/\pi )\allowbreak &=&\allowbreak \frac{(\beta
^{2})_{\infty }\left( q\right) _{\infty }}{(\beta ,\beta q)_{\infty }}%
\prod_{j=1}^{\infty }l(0|q^{j})/\prod_{j=0}^{\infty }l\left( 0|\beta
q^{j}\right) \allowbreak \\
&=&\allowbreak \frac{(\beta ^{2})_{\infty }(q)_{\infty }(-q)_{\infty }^{2}}{%
(\beta ,\beta q)_{\infty }\left( -\beta \right) _{\infty }^{2}}\allowbreak
=\allowbreak \frac{(1-\beta )\left( q\beta ^{2}|q^{2}\right) _{\infty
}(-q)_{\infty }(q^{2}|q^{2})}{\left( \beta ^{2}|q^{2}\right) _{\infty }}.
\end{eqnarray*}
Hence, 
\begin{eqnarray*}
\frac{\left( q\beta ^{2}|q^{2}\right) _{\infty }(-q)_{\infty }(q^{2}|q^{2})}{%
\left( \beta ^{2}|q^{2}\right) _{\infty }} &=&\sum_{n\geq 0}(-1)^{n}\frac{%
\beta ^{n}\left( q/\beta \right) _{n}}{\left( \beta \right) _{n+1}}, \\
\frac{\left( \beta ^{2}|q^{2}\right) _{\infty }(1-\beta )^{2}}{\left( q\beta
^{2}|q^{2}\right) _{\infty }(-q)_{\infty }(q^{2}|q^{2})\left( 1-q\right) }
&=&\sum_{n\geq 0}(-1)^{n}\frac{q^{n}\left( \beta /q\right) _{n}\left( \beta
\right) _{n}\left( q\right) _{2n}(1-\beta q^{2n})\left( \beta
^{2}|q^{2}\right) _{n}}{\left( q\right) _{n}\left( q\right) _{n+1}(1-\beta
)\left( \beta ^{2}\right) _{2n}\left( q^{2}|q^{2}\right) _{n}} \\
&=&\sum_{n\geq 0}(-1)^{n}\frac{q^{n}\left( \beta /q\right) _{n}\left( \beta
\right) _{n}\left( q|q^{2}\right) _{n}(1-\beta q^{2n})}{\left( q\right)
_{n}\left( q\right) _{n+1}\left( \beta ^{2}q|q^{2}\right) _{n}}.
\end{eqnarray*}

Now let us consider $x\allowbreak =\allowbreak 1$. We have $%
U_{2n}(1)\allowbreak =\allowbreak 2n+1$,%
\begin{eqnarray*}
\left. f_{C}(x|\beta ,q)/(2\sqrt{1-x^{2}}/\pi )\right\vert _{x=1}\allowbreak
&=& \\
\allowbreak \frac{(\beta ^{2})_{\infty }}{(\beta ,\beta q)_{\infty }}%
\prod_{j=1}^{\infty }l(1|q^{j})/\prod_{j=0}^{\infty }l\left( 1|\beta
q^{j}\right) \allowbreak &=&\allowbreak \frac{(\beta ^{2})_{\infty }\left(
q\right) _{\infty }(q)_{\infty }^{2}}{(\beta ,\beta q)_{\infty }\left( \beta
\right) _{\infty }^{2}}.
\end{eqnarray*}

Consequently, we get (\ref{nice}).
\end{proof}

\subsection{Al-Salam-Chihara and $q-$Hermite}

Al-Salam-Chihara polynomials (briefly ASC polynomials and traditionally
denoted by the letter $Q$) are the two-parameter family of polynomials
defined, e.g., in \cite{KLS} (subsection 14.8), in \cite{IA}(subsection
15.1). In \cite{Szab22}, however they were analyzed in great detail. In
particular, the case of two complex conjugate parameters was analyzed and
the applications of these polynomials in probability theory were pointed
out. There one can read that $\hat{Q}_{n}(a,b,q)\allowbreak =\allowbreak
\left( q,ab\right) _{n}.$

To simplify calculations, again, we will confine ourselves to consideration
of these polynomials for the parameters $a$ and $b$ that are complex
conjugate and both satisfying $\left\vert a\right\vert ,\left\vert
b\right\vert <1$. Then let us denote 
\begin{equation*}
p_{n}(x|y,\rho ,q)\allowbreak =\allowbreak Q_{n}(x|a,b,q),
\end{equation*}%
where the parameters $y$ and $\rho $ are defined by the equations $%
a+b\allowbreak =\allowbreak 2\rho y$ and $ab\allowbreak =\allowbreak \rho
^{2}$. Then polynomials $\left\{ p_{n}\right\} $ satisfy three-term
recurrence given by formula (3.2) of \cite{Szab-bAW} with initial conditions 
$p_{-1}(x|y,\rho ,q)\allowbreak =\allowbreak 0$ and $p_{0}(x|y,\rho
,q)\allowbreak =$\allowbreak $1$. In the sequel will appear a family of
auxiliary polynomials denoted $\left\{ b_{n}(x|q)\right\} _{n\geq 0}$.
Polynomials $\left\{ b_{n}\right\} $ satisfy certain three-term recurrence
given e.g. \cite{Szab-bAW} (Lemma 3.1i)) or earlier in \cite{bms}. However,
the simplest seems to be the following definition of these polynomials:%
\begin{equation}
b_{n}(x|q)\allowbreak =\allowbreak (-1)^{n}q^{\binom{n}{2}}h_{n}(x|q),
\label{bn}
\end{equation}%
for $q\neq 0$ and $b_{0}(x|0)\allowbreak =\allowbreak b_{2}(x|0)\allowbreak
=\allowbreak 1$, $b_{1}(x|0)\allowbreak =\allowbreak -2x$, and $%
b_{n}(x|0)\allowbreak =\allowbreak 0$ for $n\allowbreak =\allowbreak
-1,3,4,\ldots $ .

We have a proposition summarizing essential information on these families of
polynomials.

\begin{proposition}
For $\left\vert y\right\vert ,\left\vert \rho \right\vert <1$, we have the
following sets of CC between ASC and $q-$Hermite polynomials:%
\begin{eqnarray*}
c_{n,j}(y,\rho |q)\allowbreak &=&\allowbreak \QATOPD[ ] {n}{j}_{q}\rho
^{n-j}b_{n-j}(x|q), \\
\bar{c}_{n,j}(y,\rho |q) &=&\QATOPD[ ] {n}{j}_{q}\rho ^{n-j}h_{n-j}(y|q).
\end{eqnarray*}%
Hence, in particular $c_{n,0}(y,\rho |q)\allowbreak =\allowbreak \rho
^{n}b_{n}(y|q)$ and $\bar{c}_{n,0}(y,\rho |q)\allowbreak =\allowbreak \rho
^{n}h_{n}(y|q).$

Besides we have $\hat{p}_{n}(y,\rho ,q)\allowbreak =\allowbreak \left(
q,\rho ^{2}\right) _{n}$ and, as before, $\hat{h}_{n}(q)\allowbreak
=\allowbreak \left( q\right) _{n}.$
\end{proposition}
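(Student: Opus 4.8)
The plan is to read off both sets of connection coefficients from the classical generating functions of the two families, exactly as in the special Rogers case already exploited in (\ref{hC})--(\ref{Ch}). Recall (see \cite{IA}, \cite{KLS}) the continuous $q$-Hermite generating function $\sum_{n\geq 0}h_{n}(x|q)t^{n}/(q)_{n}=1/(te^{i\theta},te^{-i\theta})_{\infty}$ with $x=\cos \theta$, and the Al-Salam-Chihara generating function $\sum_{n\geq 0}Q_{n}(x|a,b,q)t^{n}/(q)_{n}=(at,bt)_{\infty}/(te^{i\theta},te^{-i\theta})_{\infty}$. Since here $a+b=2\rho y$ and $ab=\rho ^{2}$, i.e. $a=\rho e^{i\psi }$, $b=\rho e^{-i\psi }$ with $y=\cos \psi$, dividing the second series by the first cancels the common factor $(te^{i\theta},te^{-i\theta})_{\infty }$ and leaves $\sum_{n\geq 0}p_{n}(x|y,\rho ,q)t^{n}/(q)_{n}=(at,bt)_{\infty }\sum_{j\geq 0}h_{j}(x|q)t^{j}/(q)_{j}$.

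The one genuinely new ingredient, and the step I expect to be the main obstacle, is the auxiliary product expansion $(at,bt)_{\infty }=\sum_{m\geq 0}\rho ^{m}b_{m}(y|q)t^{m}/(q)_{m}$, with $b_{m}$ the polynomial of (\ref{bn}). I would prove it by applying the Euler expansion (\ref{obinT}) to each of $(at)_{\infty }$ and $(bt)_{\infty }$, collecting the coefficient of $t^{m}$, and recognising the resulting finite $q$-sum in $e^{\pm ik\psi }$ as $\rho ^{m}b_{m}(\cos \psi |q)/(q)_{m}$; equivalently this is assertion i) of Lemma 3.1 of \cite{Szab-bAW}, which I would simply cite. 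Granting it, the Cauchy product in the displayed identity, after comparing coefficients of $t^{n}$ and multiplying by $(q)_{n}$, gives $p_{n}(x|y,\rho ,q)=\sum_{j=0}^{n}\QATOPD[ ]{n}{j}_{q}\rho ^{n-j}b_{n-j}(y|q)h_{j}(x|q)$, hence $c_{n,j}=\QATOPD[ ]{n}{j}_{q}\rho ^{n-j}b_{n-j}(y|q)$ and in particular $c_{n,0}=\rho ^{n}b_{n}(y|q)$.

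For the inverse expansion I would divide the other way, $\sum_{n\geq 0}h_{n}(x|q)t^{n}/(q)_{n}=\big(\sum_{j\geq 0}p_{j}(x|y,\rho ,q)t^{j}/(q)_{j}\big)(at,bt)_{\infty }^{-1}$, and note that $(at,bt)_{\infty }^{-1}$ is nothing but the $q$-Hermite generating function at $y$ with $t$ rescaled by $\rho$, so $(at,bt)_{\infty }^{-1}=\sum_{m\geq 0}h_{m}(y|q)\rho ^{m}t^{m}/(q)_{m}$. The same coefficient comparison then yields $h_{n}(x|q)=\sum_{j=0}^{n}\QATOPD[ ]{n}{j}_{q}\rho ^{n-j}h_{n-j}(y|q)p_{j}(x|y,\rho ,q)$, hence $\bar{c}_{n,j}=\QATOPD[ ]{n}{j}_{q}\rho ^{n-j}h_{n-j}(y|q)$ and $\bar{c}_{n,0}=\rho ^{n}h_{n}(y|q)$. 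A reassuring check, forced by the general matrix framework of the Introduction, is that the two coefficient arrays must be mutually inverse; by the $q$-binomial identity $\QATOPD[ ]{n}{j}_{q}\QATOPD[ ]{j}{i}_{q}=\QATOPD[ ]{n}{i}_{q}\QATOPD[ ]{n-i}{j-i}_{q}$ this reduces to the single convolution $\sum_{l=0}^{N}\QATOPD[ ]{N}{l}_{q}b_{N-l}(y|q)h_{l}(y|q)=\delta _{N,0}$, which one should verify directly and which is precisely the reciprocity $(at,bt)_{\infty }(at,bt)_{\infty }^{-1}=1$ re-expressed through the two generating functions.

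Finally I would record the two norms. The $q$-Hermite value $\hat{h}_{n}(q)=(q)_{n}$ is exactly (\ref{inth^2}), while the Al-Salam-Chihara value $\hat{Q}_{n}(a,b,q)=(q,ab)_{n}$ is quoted from \cite{Szab22}; since $ab=\rho ^{2}$ this reads $\hat{p}_{n}(y,\rho ,q)=(q,\rho ^{2})_{n}$. Thus everything rests on the one product expansion of $(at,bt)_{\infty }$: once that is in hand, both connection formulas, their $j=0$ specialisations, the inverse relation, and the constants all follow from the Cauchy product and the cited norms.
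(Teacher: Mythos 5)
Your argument is correct, but it is genuinely more self-contained than the paper's. The paper's proof consists of quoting wholesale the two finite expansions of $p_{n}$ in the $h_{j}$'s and of $h_{n}$ in the $p_{j}$'s from \cite{bms} and Lemma 3.1 of \cite{Szab-bAW}, and reading the connection coefficients off them; the norm $\hat{Q}_{n}(a,b,q)=(q,ab)_{n}$ is likewise just specialized to $ab=\rho^{2}$. You instead reconstruct both expansions from the two classical generating functions, needing only the single product expansion $(at,bt)_{\infty}=\sum_{m}\rho^{m}b_{m}(y|q)t^{m}/(q)_{m}$ as external input (which you correctly identify with Lemma 3.1 i) of \cite{Szab-bAW}, or offer to derive from Euler's formula (\ref{obinT})); the inverse expansion then comes for free because $(at,bt)_{\infty}^{-1}$ is the $q$-Hermite generating function at $y$ with $t\mapsto\rho t$, and you even get the mutual-inverse convolution $\sum_{l}\QATOPD[ ]{N}{l}_{q}b_{N-l}(y|q)h_{l}(y|q)=\delta_{N,0}$ as a consistency check, which the paper only obtains later as identity i) of the subsequent theorem. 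One caution if you carry out the Euler-expansion computation explicitly: collecting the coefficient of $t^{m}$ in $(at)_{\infty}(bt)_{\infty}$ produces $(-1)^{m}\rho^{m}q^{\binom{m}{2}}h_{m}(y|q^{-1})/(q)_{m}$, i.e.\ the $b_{m}$ of \cite{Szab-bAW} involves $h_{m}(\cdot|q^{-1})$, not $h_{m}(\cdot|q)$ as the displayed formula (\ref{bn}) of this paper literally reads (the paper's own special values $b_{2}(x|0)=1$ confirm that (\ref{bn}) carries a typo), so match your result against the cited lemma rather than against (\ref{bn}). With that caveat, every step of your proposal is sound and the norms are handled exactly as in the paper.
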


\begin{proof}
Following \cite{bms} and are presented, e.g., in \cite{Szab-bAW} (Lemma3.1)
we have for all $n\geq 0$ 
\begin{eqnarray}
p_{n}(x|y,\rho ,q)\allowbreak &=&\allowbreak \sum_{j=0}^{n}\QATOPD[ ] {n}{j}%
_{q}\rho ^{n-j}b_{n-j}(x|q)h_{j}(x|q),  \label{pnah} \\
h_{n}(x|q) &=&\sum_{j=0}^{n}\QATOPD[ ] {n}{j}_{q}\rho
^{n-j}h_{n-j}(y|q)p_{j}(x|y,\rho ,q).  \label{hanap}
\end{eqnarray}
\end{proof}

This case leads to the well-known and important identities. Namely, we have
the following result.

\begin{theorem}
i) For all $n\geq 1$ and complex $x,y,\rho $, we have%
\begin{equation*}
0=\sum_{j=0}^{n}\QATOPD[ ] {n}{j}_{q}h_{j}(y)b_{n-j}(y).
\end{equation*}

ii) For $\left\vert \rho \right\vert ,\left\vert q\right\vert <1$, $%
\left\vert x\right\vert ,\left\vert y\right\vert \leq 1$ 
\begin{eqnarray}
\frac{\left( \rho ^{2}\right) _{\infty }}{\prod_{j=0}^{\infty }w(x,y|\rho
q^{j})} &=&\sum_{j\geq 0}\rho ^{j}h_{j}(x|q)h_{j}(y|q)/\left( q\right) _{j},
\label{asc1} \\
\frac{\prod_{j=0}^{\infty }w(x,y|\rho q^{j})}{\left( \rho ^{2}\right)
_{\infty }} &=&\sum_{j\geq 0}\rho ^{j}b_{j}(y|q)p_{j}(x|y,\rho ,q)/\left(
q,\rho ^{2}\right) _{j}.  \label{asc2}
\end{eqnarray}
\end{theorem}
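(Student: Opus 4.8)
The plan is to read both parts off the apparatus already in place: part (i) is nothing but the statement that the two connection-coefficient matrices of the preceding Proposition are mutual inverses, while part (ii) is a direct application of Theorem \ref{podst} to the pair consisting of the $q$-Hermite measure and the Al-Salam--Chihara measure, the Radon--Nikodym derivative being evaluated through the factorisation (\ref{rozklw}).

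For part (i) I would write $c_{n,j}=\QATOPD[ ]{n}{j}_{q}\rho^{n-j}b_{n-j}(y|q)$ for the coefficients expressing $p_n$ in the $h_k$, and $\bar c_{n,j}=\QATOPD[ ]{n}{j}_{q}\rho^{n-j}h_{n-j}(y|q)$ for those expressing $h_n$ in the $p_k$, exactly as in the Proposition. Since these two expansions invert one another, the associated lower-triangular matrices satisfy $\sum_{j=0}^{n}c_{n,j}\bar c_{j,0}=\delta_{n,0}$, which is the present instance of the orthogonality relation (\ref{01}). Substituting $\bar c_{j,0}=\rho^{j}h_{j}(y|q)$ gives, for $n\ge 1$, $0=\rho^{n}\sum_{j=0}^{n}\QATOPD[ ]{n}{j}_{q}b_{n-j}(y|q)\,h_{j}(y|q)$; cancelling the harmless factor $\rho^{n}$ (and appealing to polynomiality in $\rho$ to include $\rho=0$) yields the claimed identity, the sum being unchanged under $j\mapsto n-j$.

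For part (ii) I would invoke Theorem \ref{podst} twice, each time with $\{\beta_n\}$ the family orthogonal with respect to the denominator measure $\nu$. To obtain (\ref{asc1}) take $\beta_n=h_n$, $\alpha_n=p_n$, so that $\nu$ is the $q$-Hermite and $\mu$ the ASC measure; then the relevant $c_{n,0}$ is $\bar c_{n,0}=\rho^{n}h_n(y|q)$ and $\hat\beta_n=\hat h_n=(q)_n$, whence $a_n=\rho^{n}h_n(y)/(q)_n$ and (\ref{basicEx}) reads $\frac{d\mu}{d\nu}=\sum_{n\ge 0}\rho^{n}h_n(x|q)h_n(y|q)/(q)_n$. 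To obtain (\ref{asc2}) I would swap the roles, taking $\beta_n=p_n$, $\alpha_n=h_n$, so that $\nu$ is the ASC measure; now $c_{n,0}=\rho^{n}b_n(y|q)$ and $\hat\beta_n=\hat p_n=(q,\rho^2)_n$, giving $a_n=\rho^{n}b_n(y)/(q,\rho^2)_n$ and $\frac{d\mu}{d\nu}=\sum_{n\ge 0}\rho^{n}b_n(y|q)p_n(x|y,\rho,q)/(q,\rho^2)_n$. In both cases the hypothesis $\int(d\mu/d\nu)^2\,d\nu<\infty$ is met because, for $|\rho|,|q|<1$, both weights are bounded and strictly positive on $[-1,1]$, as noted after Theorem \ref{podst}.

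The only genuine computation --- and the main obstacle --- is to identify the two derivatives above with the closed forms appearing in the statement. Using the weights of the $q$-Hermite and Al-Salam--Chihara families (see \cite{KLS}, \cite{Szab-bAW}) under the conjugate parametrisation $a=\rho e^{i\phi}$, $b=\rho e^{-i\phi}$, $y=\cos\phi$, $x=\cos\theta$, the ratio of unnormalised weights is $1/(ae^{i\theta},ae^{-i\theta},be^{i\theta},be^{-i\theta})_{\infty}$; the four $q$-Pochhammer factors are precisely the arguments occurring in (\ref{rozklw}), so their product equals $\prod_{k\ge 0}w(x,y|\rho q^{k})$. Carrying the normalising constant $(ab)_{\infty}=(\rho^2)_\infty$, one finds $\frac{f_p}{f_h}=(\rho^2)_\infty/\prod_{k\ge 0}w(x,y|\rho q^{k})$, and its reciprocal for $f_h/f_p$. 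Inserting these into the two expansions of the previous paragraph produces (\ref{asc1}) and (\ref{asc2}), the constant $(\rho^2)_\infty$ being the one that normalises $f_p$ to total mass $1$.
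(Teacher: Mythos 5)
Your proposal is correct and follows essentially the same route as the paper: part (i) is read off from the inverse-matrix identity (\ref{01}) applied to the connection coefficients $c_{n,j}=\QATOPD[ ]{n}{j}_{q}\rho^{n-j}b_{n-j}(y|q)$ and $\bar c_{n,j}=\QATOPD[ ]{n}{j}_{q}\rho^{n-j}h_{n-j}(y|q)$ of the preceding Proposition, and part (ii) is Theorem \ref{podst} applied in both directions after identifying $f_{CN}(x|y,\rho,q)/f_{h}(x|q)$ with $\left(\rho^{2}\right)_{\infty}/\prod_{j\geq 0}w(x,y|\rho q^{j})$. The only (harmless) difference is that you re-derive this density ratio from the Al-Salam--Chihara weight under the conjugate parametrisation via (\ref{rozklw}), whereas the paper simply quotes it as formula (5.6) of \cite{Szab22}.
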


\begin{remark}
Notice that (\ref{asc1}) it is nothing else as the famous Poisson-Mehler
formula.
\end{remark}

\begin{proof}
Recall that, as shown also in \cite{Szab22}(formula 5.6), the density that
makes these polynomial orthogonal is given by the following formula%
\begin{equation}
f_{CN}(x|y,\rho ,q)\allowbreak =\allowbreak f_{h}(x|q)\frac{\left( \rho
^{2}\right) _{\infty }}{\prod_{j=0}^{\infty }w(x,y|\rho q^{j})},  \label{fCN}
\end{equation}%
where $w$ is given by \ref{wxya}. By the way the density $f_{CN}$ will be
called conditional $q-$normal since it has a clear probabilistic
interpretation as shown, e.g., in \cite{SzablAW}.

From the point of view of the main idea of this paper, the connection
coefficients of polynomials $\left\{ p_{n}\right\} $ and $\left\{
h_{n}\right\} $ are important. Hence, we have i).

ii) is obtained after applying (\ref{basicEx}) and canceling out on both
sides $f_{h}(x|q).$
\end{proof}

\begin{remark}
Now let us set $x=y$ in these identities and then note that 
\begin{equation*}
p_{n}(x|x,\rho ,q)\allowbreak =\allowbreak C_{n}(x|\rho ,q)~\text{and~}%
w(x,x|\rho )\allowbreak =\allowbreak (1-\rho )^{2}l(x|\rho ).
\end{equation*}%
We get then the following expansions of $\prod_{j=0}^{\infty }l(x|\rho
q^{j}) $ and $1/\prod_{j=0}^{\infty }l(x|\rho q^{j})$ in terms of $q-$%
Hermite and related polynomials. These expansions are true, of course, for $%
\left\vert x\right\vert ,\left\vert \rho \right\vert ,\left\vert
q\right\vert \,<1:$ 
\begin{eqnarray*}
\frac{\left( \rho ^{2}\right) _{\infty }}{\left( \rho \right) _{\infty
}^{2}\prod_{j=0}^{\infty }l(x|\rho q^{j})} &=&\sum_{j\geq 0}\rho
^{j}h_{j}^{2}(x|q)/\left( q\right) _{j}, \\
\frac{\left( \rho \right) _{\infty }^{2}\prod_{j=0}^{\infty }l(x|\rho q^{j})%
}{\left( \rho ^{2}\right) _{\infty }} &=&\sum_{j\geq 0}\rho
^{j}b_{j}(x|q)C_{j}(x|,\rho ,q)/\left( q,\rho ^{2}\right) _{j}.
\end{eqnarray*}
\end{remark}

\subsection{Askey-Wilson and Al-Salam-Chihara}

Askey-Wilson polynomials were introduced and analyzed in \cite{AW85}. Their
definition and same basic properties can be found, e.g., in \cite{KLS}%
(subsection 14.3) or \cite{Szab22}(section 6). In \cite{Szab22} the CC
between these two families of polynomials were presented in exact, legible
but not simple forms (see \cite{Szab22}(2.15 and 2.16)) that base, of
course, on the famous formula of \cite{AW85}. However, if one introduces new
parameters forming complex conjugate pairs then the expression for the CC's
can be simplified and expressed in the form of certain, well-described
polynomials. Besides, the parameters forming complex conjugate pairs have
nice probabilistic interpretation. In \cite{Szab-bAW} many simplifications
including connection coefficients were found. Thus, let us recall these
results and derive some finite and infinite identities involving them and
ASC polynomials. The new parameters are defined by the equalities%
\begin{eqnarray}
2\rho _{1}y &=&a+b,~\rho _{1}^{2}\allowbreak =\allowbreak ab,  \label{abyr}
\\
2\rho _{2}z\allowbreak &=&\allowbreak c+d,~\rho _{2}^{2}=cd.  \label{cdzr}
\end{eqnarray}%
We also denote by $\alpha _{n}(x|y,\rho _{1},z,\rho _{2},q)$ the
Askey-Wilson polynomials with new parameters

\begin{proposition}
Following (\ref{ex_w_na_p}) and (\ref{ex_p_na_w}) we have for $n\geq j\geq
0: $ 
\begin{eqnarray*}
c_{n,j}(y,\rho _{1},z,\rho _{2}|q) &=&\QATOPD[ ] {n}{j}_{q}\frac{\rho
_{2}^{n-j}\left( \rho _{1}^{2}q^{j}\right) _{n-j}}{\left( \rho _{1}^{2}\rho
_{2}^{2}q^{n+j-1}\right) _{n-j}}g_{n-j}\left( z|y,\rho _{1}\rho
_{2}q^{n-1},q\right) , \\
\bar{c}_{n,j}(y,\rho _{1},z,\rho _{2}|q) &=&\QATOPD[ ] {n}{j}_{q}\frac{\rho
_{2}^{n-j}\left( \rho _{1}^{2}q^{j}\right) _{n-j}}{\left( \rho _{1}^{2}\rho
_{2}^{2}q^{2j}\right) _{n-j}}p_{n-j}\left( z|y,\rho _{1}\rho
_{2}q^{j},q\right) .
\end{eqnarray*}%
where $g_{n}\left( z|y,\tau ,q\right) \allowbreak \ $is defined by 
\begin{equation}
g_{n}\left( x|y,\rho ,q\right) \allowbreak =\allowbreak \left\{ 
\begin{array}{ccc}
\rho ^{n}p_{n}\left( y|x,\rho ^{-1},q\right) & \text{if} & \rho \neq 0, \\ 
b_{n}\left( x|q\right) & \text{if} & \rho =0.%
\end{array}%
\right.  \label{_g}
\end{equation}%
In particular, we have%
\begin{eqnarray*}
\bar{c}_{n,0}\allowbreak &=&\allowbreak \frac{\rho _{2}^{n}\left( \rho
_{1}^{2}\right) _{n}}{\left( \rho _{1}^{2}\rho _{2}^{2}\right) _{n}}%
p_{n}(z|y,\rho _{1}\rho _{2},q), \\
c_{n,0}\allowbreak &=&\allowbreak \frac{\rho _{2}^{n}\left( \rho
_{1}^{2}\right) _{n}}{\left( \rho _{1}^{2}\rho _{2}^{2}q^{n-1}\right) _{n}}%
g_{n}(z|y,\rho _{1}\rho _{2}q^{n-1},q).
\end{eqnarray*}%
Besides, we have%
\begin{eqnarray}
\hat{p}_{n}(x|y,\rho _{1},q)\allowbreak &=&\allowbreak \left( q,\rho
_{1}^{2}\right) _{n},  \notag \\
\hat{\alpha}_{n}(x|y,\rho _{1},z,\rho _{2},q)\allowbreak &=&\frac{\left(
\rho _{1}^{2}\rho _{2}^{2}q^{n-1}\right) _{n}\left( \rho _{1}^{2},\rho
_{2}^{2},q\right) _{n}}{\left( \rho _{1}^{2}\rho _{2}^{2}\right) _{2n}}%
\prod_{j=0}^{n-1}w(z,y|\rho _{1}\rho _{2}q^{j}).  \label{ahat}
\end{eqnarray}
\end{proposition}

\begin{proof}
Following \cite{Szab-bAW}(3.10,3.11) we have 
\begin{gather}
\alpha _{n}\left( x|y,\rho _{1},z,\rho _{2},q\right) =  \label{ex_w_na_p} \\
\sum_{j=0}^{n}\left[ \QATOP{n}{j}\right] _{q}p_{j}\left( x|y,\rho
_{1},q\right) \frac{\rho _{2}^{n-j}\left( \rho _{1}^{2}q^{j}\right) _{n-j}}{%
\left( \rho _{1}^{2}\rho _{2}^{2}q^{n+j-1}\right) _{n-j}}g_{n-j}\left(
z|y,\rho _{1}\rho _{2}q^{n-1},q\right) ,  \notag
\end{gather}%
\begin{equation}
p_{n}\left( x|y,\rho _{1},q\right) =\sum_{j=0}^{n}\left[ \QATOP{n}{j}\right]
_{q}\alpha _{j}\left( x|y,\rho _{1},z,\rho _{2},q\right) \frac{\rho
_{2}^{n-j}\left( \rho _{1}^{2}q^{j}\right) _{n-j}}{\left( \rho _{1}^{2}\rho
_{2}^{2}q^{2j}\right) _{n-j}}p_{n-j}\left( z|y,\rho _{1}\rho
_{2}q^{j},q\right) .  \label{ex_p_na_w}
\end{equation}

Now, the first five assertions are obvious. The only argument is required to
justify the expression for $\hat{\alpha}_{n}$. However, it follows almost
directly from formula 7.2 of \cite{Szab22}. Now, we will use the notation 
\begin{eqnarray*}
a\allowbreak &=&\allowbreak \rho _{1}\exp (\chi ),b\allowbreak =\allowbreak
\rho _{1}\exp (-\chi ), \\
\allowbreak \allowbreak y &=&\cos \left( \chi \right) ,~c\allowbreak
=\allowbreak \rho _{2}\exp (\phi ), \\
d\allowbreak &=&\allowbreak \rho _{2}\exp (-\phi ),z=\cos \left( \phi
\right) \allowbreak .
\end{eqnarray*}%
Following (\ref{abyr}), (\ref{cdzr}) and (\ref{rozklw}), we observe that 
\begin{eqnarray*}
abcd\allowbreak &=&\allowbreak \rho _{1}^{2}\rho _{2}^{2,},ab\allowbreak
=\allowbreak \rho _{1}^{2},.cd\allowbreak =\allowbreak \rho _{2}^{2}, \\
(ac,bc,ad,bd)_{n}\allowbreak &=&\allowbreak \prod_{k-0}^{n}w(y,z|\rho
_{1}\rho _{2}q^{k}).
\end{eqnarray*}
\end{proof}

Recall also that the density that makes AW polynomials with complex
conjugate parameters is denoted $f_{C2N}$, because of its clear,
probabilistic interpretation as a certain conditional density (for details
see \cite{Szab-bAW}, \cite{Szab22} or \cite{Szab13}). In particular, it was
shown in \cite{SzablAW} that the density that makes polynomials $\left\{
\alpha _{n}\right\} $ orthogonal is 
\begin{equation}
f_{C2N}(x|y,\rho _{1},z,\rho _{2},q)\allowbreak =\allowbreak \frac{%
f_{CN}(y|x,\rho _{1},q)f_{CN}(x|z,\rho _{2},q)}{f_{CN}(y|z,\rho _{1}\rho
_{2},q)},  \label{fc2n1}
\end{equation}%
where $f_{CN}(x|y,\rho ,q)$ is given by (\ref{fCN}). Let us remark that by
the symmetry argument we have also%
\begin{equation}
f_{C2N}(x|y,\rho _{1},z,\rho _{2},q)=\frac{f_{CN}(x|y,\rho
_{1},q)f_{CN}(z|x,\rho _{2},q)}{f_{CN}(z|y,\rho _{1}\rho _{2},q)}.
\label{fc2n2}
\end{equation}%
By the way, it was also shown in \cite{SzablAW} that 
\begin{equation*}
\int_{-1}^{1}f_{CN}(x|y,\rho _{1},q)f_{CN}(y|z,\rho
_{2},q)dy=f_{CN}(x|z,\rho _{1}\rho _{2},q),
\end{equation*}%
i.e., the Chapman-Kolmogorov property holds.

Having recalled (\ref{ex_w_na_p}) and (\ref{ex_p_na_w}) we put all the
necessary information to apply ideas from the introduction into the
following summary. Hence, we have the following result:

\begin{theorem}
i) For all $n\geq 1$, complex $z$, $y$, $\beta $ such that $\beta ^{2}\notin
\{1,q^{-1},\ldots ,\}:$%
\begin{eqnarray*}
0 &=&\sum_{j=0}^{n}\QATOPD[ ] {n}{j}_{q}\left( \beta ^{2}q^{j}\right)
_{n-1}p_{j}(z|y,\beta ,q)g_{n-j}(z|y,\beta q^{n-1},q), \\
0 &=&\sum_{j=0}^{n}\QATOPD[ ] {n}{j}_{q}\frac{p_{n-j}(z|y,\beta
q^{j},q)g_{j}(z|y,\beta q^{j-1},q)(1-\beta ^{2}q^{2j-1})}{\left( \beta
^{2}q^{j-1}\right) _{n}(1-\beta ^{2}q^{n+j-1})}.
\end{eqnarray*}

In particular, for $x\allowbreak =\allowbreak y\allowbreak =\allowbreak 0$
we get (\ref{1p1}) and (\ref{1p2}), that were obtained by other means.

ii) For $\left\vert y\right\vert ,\left\vert z\right\vert ,\left\vert \rho
_{1}\right\vert ,\left\vert \rho _{2}\right\vert ,\left\vert q\right\vert <1$
we have 
\begin{gather*}
f_{CN}(x|z,\rho _{2},q)=f_{CN}(z|y,\rho _{1}\rho _{2},q)\sum_{n\geq 0}\frac{%
\rho _{2}^{n}}{\left( \rho _{1}^{2}\rho _{2}^{2}\right) _{n}\left( q\right)
_{n}}p_{n}(z|y,\rho _{1}\rho _{2},q)p_{n}(x|y,\rho _{1},q), \\
f_{CN}(z|y,\rho _{1}\rho _{2},q)=f_{CN}(x|z,\rho _{2},q)\times \\
\sum_{n\geq 0}\frac{\rho _{2}^{n}\left( \rho _{1}^{2}\rho _{2}^{2}\right)
_{2n}\prod_{j=0}^{n-1}w\left( y,z|\rho _{1}\rho _{2}q^{j}\right) }{\left(
\rho _{1}^{2}\rho _{2}^{2}q^{n-1}\right) _{n}^{2}\left( \rho
_{2}^{2},q\right) _{n}}g_{n}(z|y,\rho _{1}\rho _{2}q^{n-1},q)\alpha
_{n}(x|y,\rho _{1},z,\rho _{2},q).
\end{gather*}
\end{theorem}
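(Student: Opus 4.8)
The plan is to feed the connection coefficients $c_{n,j}$ and $\bar c_{n,j}$ recorded in the preceding Proposition into the general machinery of Theorem \ref{podst} together with its companion inverse-matrix identities (\ref{01}) and (\ref{02}). Concretely, the Askey--Wilson family $\{\alpha_n\}$ plays the role of $\{\beta_n\}$ and the Al-Salam--Chihara family $\{p_n(\cdot|y,\rho_1,q)\}$ the role of $\{\alpha_n\}$ in (\ref{con1}) and (\ref{con2}), so that (\ref{ex_w_na_p}) and (\ref{ex_p_na_w}) are exactly those two expansions. Both parts then reduce to substituting the explicit $c_{n,j},\bar c_{n,j},c_{n,0},\bar c_{n,0}$ and the norms $\hat p_n=(q,\rho_1^2)_n$, $\hat\alpha_n$, and simplifying the resulting $q$-Pochhammer products.

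For part i) I would start from (\ref{01}) and (\ref{02}), which in the present notation read $0=\sum_{j=0}^n c_{n,j}\bar c_{j,0}$ and $0=\sum_{j=0}^n\bar c_{n,j}c_{j,0}$ for $n\ge 1$. Inserting the Proposition's formulas, the factor $\rho_2^{\,n}(\rho_1^2)_n$ splits off as a $j$-independent constant, using (\ref{s1}) in the form $(\rho_1^2)_j(\rho_1^2q^j)_{n-j}=(\rho_1^2)_n$, and may be cancelled since the left side is $0$. Writing $\beta:=\rho_1\rho_2$ (so $\beta^2=\rho_1^2\rho_2^2$), the surviving polynomial factors are precisely $p_j(z|y,\beta,q)$ and $g_{n-j}(z|y,\beta q^{n-1},q)$, independent of the individual $\rho_1,\rho_2$. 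It then remains to massage the leftover $q$-Pochhammer quotient: for the first identity I would telescope $(\beta^2)_j(\beta^2q^{n+j-1})_{n-j}$ and $(\beta^2q^j)_{n-1}$ against the common constant $(\beta^2)_{2n-1}$ via (\ref{s1}); for the second I would apply (\ref{knk1}) with $a=\beta^2$, $k=j$, turning $(\beta^2q^{2j})_{n-j}(\beta^2q^{j-1})_j$ into $(\beta^2q^{j-1})_n(1-\beta^2q^{n+j-1})/(1-\beta^2q^{2j-1})$, which yields the displayed ratio. The specialisation $z=y=0$ collapses each sum to its even-index terms, through the vanishing of $p_m$ and $g_{n-m}$ at odd index, and after the relabelling $\beta^2\mapsto\beta$, $q^2\mapsto q$ used elsewhere in the paper recovers (\ref{1p1}) and (\ref{1p2}).

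For part ii) the plan is to apply Theorem \ref{podst} in both directions. Expanding the density ratio in the family $\{p_n(\cdot|y,\rho_1,q)\}$ (orthogonal with respect to $f_{CN}(x|y,\rho_1,q)$) gives coefficients $a_n=\bar c_{n,0}/\hat p_n=\rho_2^{\,n}p_n(z|y,\rho_1\rho_2,q)/[(\rho_1^2\rho_2^2)_n(q)_n]$, matching the first display. Expanding instead in the Askey--Wilson family $\{\alpha_n\}$ (orthogonal with respect to $f_{C2N}$) gives $b_n=c_{n,0}/\hat\alpha_n$, in which the block $(\rho_1^2,\rho_2^2,q)_n=(\rho_1^2)_n(\rho_2^2)_n(q)_n$ of $\hat\alpha_n$ absorbs the $(\rho_1^2)_n$ of $c_{n,0}$; what remains is the displayed quotient together with the finite $w$-product carried by $\hat\alpha_n$. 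The other half of the work is to identify the ratio $\mathrm d\mu/\mathrm d\nu$ in closed form: I would substitute the factorisation (\ref{fc2n2}) (and its mirror (\ref{fc2n1})) for $f_{C2N}$, cancel the common factor $f_{CN}(x|y,\rho_1,q)$, and use the symmetry of $w$ in its first two arguments to re-order the arguments of the single-variable $f_{CN}$'s into the displayed form.

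The main obstacle will be the second expansion of part ii): the normalisation $\hat\alpha_n$ carries the finite product $\prod_{j=0}^{n-1}w(z,y|\rho_1\rho_2 q^j)$, whereas the densities $f_{CN}$ in (\ref{fCN}) carry the corresponding infinite products, so the delicate bookkeeping is to track these $w$-factors and to keep the balance between $(\rho_1^2\rho_2^2q^{n-1})_n$ and $(\rho_1^2\rho_2^2)_{2n}$ straight through the cancellation; this is where (\ref{s1}) and the definition (\ref{fCN}) must be invoked carefully to reconcile the finite $w$-product of the coefficient with the infinite $w$-products of the two $f_{CN}$'s and to settle whether that finite product sits in the numerator or the denominator of the final coefficient.
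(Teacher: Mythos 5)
Your proposal follows the paper's own proof essentially verbatim: part i) is (\ref{01})--(\ref{02}) with the Proposition's connection coefficients, the $j$-independent factor $\rho_2^{n}(\rho_1^{2})_{n}$ pulled out via (\ref{s1}), the substitution $\beta=\rho_1\rho_2$, and the simplifications (\ref{knk1})--(\ref{knk2}); part ii) is Theorem \ref{podst} applied in both directions combined with (\ref{fc2n1})--(\ref{fc2n2}) and cancellation of $f_{CN}(x|y,\rho_1,q)$. The bookkeeping worry you flag at the end is well taken --- a direct computation of $c_{n,0}/\hat{\alpha}_{n}$ places the finite product $\prod_{j=0}^{n-1}w(y,z|\rho_1\rho_2 q^{j})$ in the denominator rather than the numerator of the displayed coefficient --- but that is an issue with the statement as printed, not with your argument, which coincides with the paper's.
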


\begin{proof}
i) The identities $\sum_{j=0}^{n}c_{n.j}\bar{c}_{j,0}\allowbreak
=\allowbreak 0$ and $\sum_{j=0}^{n}\bar{c}_{n,j}c_{j,0}\allowbreak
=\allowbreak 0$ imply that :

\begin{eqnarray*}
0 &=&\sum_{j=0}^{n}\QATOPD[ ] {n}{j}_{q}\frac{\rho _{2}^{n-j}\left( \rho
_{1}^{2}q^{j}\right) _{n-j}}{\left( \rho _{1}^{2}\rho
_{2}^{2}q^{n+j-1}\right) _{n-j}}g_{n-j}\left( z|y,\rho _{1}\rho
_{2}q^{n-1},q\right) \frac{\rho _{2}^{j}\left( \rho _{1}^{2}\right) _{j}}{%
\left( \rho _{1}^{2}\rho _{2}^{2}\right) _{j}}p_{j}(z|y,\rho _{1}\rho _{2},q)
\\
&=&\rho _{2}^{n}\left( \rho _{1}^{2}\right) _{n}\sum_{j=0}^{n}\QATOPD[ ] {n}{%
j}_{q}\frac{g_{n-j}\left( z|y,\rho _{1}\rho _{2}q^{n-1},q\right)
p_{j}(z|y,\rho _{1}\rho _{2},q)}{\left( \rho _{1}^{2}\rho
_{2}^{2}q^{n+j-1}\right) _{n-j}\left( \rho _{1}^{2}\rho _{2}^{2}\right) _{j}}%
, \\
0 &=&\sum_{j=0}^{n}\left[ \QATOP{n}{j}\right] _{q}\frac{\rho
_{2}^{n-j}\left( \rho _{1}^{2}q^{j}\right) _{n-j}}{\left( \rho _{1}^{2}\rho
_{2}^{2}q^{2j}\right) _{n-j}}p_{n-j}\left( z|y,\rho _{1}\rho
_{2}q^{j},q\right) \\
&&\times \frac{\rho _{2}^{j}\left( \rho _{1}^{2}\right) _{j}}{\left( \rho
_{1}^{2}\rho _{2}^{2}q^{j-1}\right) _{j}}g_{j}(z|y,\rho _{1}\rho
_{2}q^{j-1},q) \\
&=&\rho _{2}^{n}\left( \rho _{1}^{2}\right) _{n}\sum_{j=0}^{n}\left[ \QATOP{n%
}{j}\right] _{q}\frac{p_{n-j}\left( z|y,\rho _{1}\rho _{2}q^{j},q\right)
g_{j}(z|y,\rho _{1}\rho _{2}q^{j-1},q)}{\left( \rho _{1}^{2}\rho
_{2}^{2}q^{2j}\right) _{n-j}\left( \rho _{1}^{2}\rho _{2}^{2}q^{j-1}\right)
_{j}}.
\end{eqnarray*}

Now, it is enough to denote $\beta \allowbreak =\allowbreak \rho _{1}\rho
_{2}$ and apply (\ref{knk1}) and (\ref{knk2}).

ii) We have by (\ref{basicEx}) 
\begin{gather*}
f_{C2N}(x|y,\rho _{1},z,\rho _{2},q) \\
=f_{CN}(x|y,\rho _{1},q)\sum_{n\geq 0}\frac{\rho _{2}^{n}}{\left( \rho
_{1}^{2}\rho _{2}^{2}\right) _{n}\left( q\right) _{n}}p_{n}(z|y,\rho
_{1}\rho _{2},q)p_{n}(x|y,\rho _{1},q), \\
f_{CN}(x|y,\rho _{1},q)=f_{C2N}(x|y,\rho _{1},z,\rho _{2},q) \\
\times \sum_{n\geq 0}\frac{\rho _{2}^{n}\left( \rho _{1}^{2}\rho
_{2}^{2}\right) _{2n}\prod_{j=0}^{n-1}w\left( y,z|\rho _{1}\rho
_{2}q^{j}\right) }{\left( \rho _{1}^{2}\rho _{2}^{2}q^{n-1}\right)
_{n}^{2}\left( \rho _{2}^{2},q\right) _{n}} \\
\times g_{n}(z|y,\rho _{1}\rho _{2}q^{n-1},q)\alpha _{n}(x|y,\rho
_{1},z,\rho _{2},q).
\end{gather*}%
Taking into account (\ref{fc2n1}) and (\ref{fc2n2}) and cancelling out $%
f_{CN}(x|y,\rho _{1},q)$ we get%
\begin{gather*}
f_{CN}(x|z,\rho _{2},q)=f_{CN}(z|y,\rho _{1}\rho _{2},q)\sum_{n\geq 0}\frac{%
\rho _{2}^{n}}{\left( \rho _{1}^{2}\rho _{2}^{2}\right) _{n}\left( q\right)
_{n}}p_{n}(z|y,\rho _{1}\rho _{2},q)p_{n}(x|y,\rho _{1},q), \\
f_{CN}(z|y,\rho _{1}\rho _{2},q)=f_{CN}(x|z,\rho _{2},q)\times \\
\sum_{n\geq 0}\frac{\rho _{2}^{n}\left( \rho _{1}^{2}\rho _{2}^{2}\right)
_{2n}\prod_{j=0}^{n-1}w\left( y,z|\rho _{1}\rho _{2}q^{j}\right) }{\left(
\rho _{1}^{2}\rho _{2}^{2}q^{n-1}\right) _{n}^{2}\left( \rho
_{2}^{2},q\right) _{n}}g_{n}(z|y,\rho _{1}\rho _{2}q^{n-1},q)\alpha
_{n}(x|y,\rho _{1},z,\rho _{2},q).
\end{gather*}
\end{proof}

\begin{remark}
Let us notice, that these two identities are the particular cases of the two
similar identities proved in \cite{Szab-bAW}(Corollary 3.3).
\end{remark}

\begin{remark}
These identities look very complicated. One can simplify them a bit by
setting $z\allowbreak =\allowbreak y$ and remembering that $p_{n}(z|z,\rho
,q)\allowbreak =\allowbreak (q)_{n}C_{n}(x|\rho ,q)$.

Hence, we have for all $n\geq 1$ and complex $x$ and complex $\rho $ such
that $\ \rho ^{2}\notin \{q^{-j}|j=0,1,..\}:$ 
\begin{eqnarray}
0 &=&\sum_{j=0}^{n}(\rho q^{n-1})^{n-j}\left( \rho ^{2}q^{j}\right)
_{n-1}C_{j}(x|\rho ,q)C_{n-j}(x|\rho ^{-1}q^{-(n-1)},q),  \label{C1} \\
0 &=&\sum_{j=0}^{n}\frac{\left( \rho q^{j-1}\right) ^{j}C_{j}(x|\rho
^{-1}q^{-(j-1)},q)}{\left( \rho ^{2}q^{j-1}\right) _{j}}\frac{C_{n-j}(x|\rho
q^{j},q)}{\left( \rho ^{2}q^{2j}\right) _{n-j}}.  \label{C2}
\end{eqnarray}%
In order to get (\ref{C1}), we used (\ref{knk2}).

Now recall that 
\begin{equation*}
C_{2n}(0|\beta ,q)\allowbreak =\allowbreak (-1)^{n}\frac{\left( \beta
^{2}|q^{2}\right) _{n}}{\left( q^{2}|q^{2}\right) _{n}}
\end{equation*}%
and that we have consequently 
\begin{equation*}
\beta ^{2n}C_{2n}(0|\beta ^{-1},q)\allowbreak =\allowbreak
(-1)^{n}\prod_{j=0}^{n-1}(\beta ^{2}-q^{2j}).
\end{equation*}%
As a result of these observations, we get 
\begin{eqnarray*}
(\rho q^{2n-1})^{2n-2j}C_{2n-2j}(0|\rho ^{-1}q^{-(2n-1)},q) &=&\frac{%
q^{(n-j)(n-j-1)}\left( \rho ^{2}q^{2n+2j}|q^{2}\right) _{n-j}}{\left(
q^{2}|q^{2}\right) _{n-j}}, \\
\left( \rho q^{2j-1}\right) ^{2j}C_{2j}(0|\rho ^{-1}q^{-(2j-1)},q) &=&\frac{%
q^{j(j-1)}\left( \rho ^{2}q^{2j}|q^{2}\right) }{\left( q^{2}|q^{2}\right)
_{j}}.
\end{eqnarray*}%
Hence, we have after applying the formulae 
\begin{equation*}
\left( a\right) _{2n}\allowbreak =\allowbreak \left( a|q^{2}\right)
_{n}\left( aq|q^{2}\right) _{n}~\text{and~}\left( a\right) _{n+m}\allowbreak
=\allowbreak \left( a\right) _{n}\left( aq^{n}\right) _{m}
\end{equation*}%
and multiplying both sides by $\left( q^{2}|q^{2}\right) _{n}$. 
\begin{eqnarray*}
0 &=&\sum_{k=0}^{n}\QATOPD[ ] {n}{k}_{q^{2}}\frac{(-1)^{k}q^{(n-k)(n-k-1)}(%
\rho ^{2}|q^{2})_{k}(\rho ^{2}q^{2n+2k}|q^{2})_{n-k}}{\left( \rho
^{2}|q^{2}\right) _{k}(\rho ^{2}q|q^{2})_{k}(\rho
^{2}q^{2n+2k-1}|q^{2})_{n-k}\left( \rho ^{2}q^{2n+2k}|q^{2}\right) _{n-k}} \\
&=&\sum_{k=0}^{n}\QATOPD[ ] {n}{k}_{q^{2}}\frac{(-1)^{k}q^{2\binom{n-k}{2}}}{%
(\rho ^{2}q|q^{2})_{k}\left( \rho ^{2}q^{2n+2k-1}|q^{2}\right) _{n-k}},
\end{eqnarray*}%
and 
\begin{eqnarray*}
0 &=&\sum_{k=0}^{n}\QATOPD[ ] {n}{k}_{q^{2}}\frac{(-1)^{n-k}q^{k(k-1)}\left(
\rho ^{2}q^{2k}|q^{2}\right) _{k}\left( \rho ^{2}q^{4k}|q^{2}\right) _{n-k}}{%
\left( \rho ^{2}q^{2k-1}|q^{2}\right) _{k}\left( \rho
^{2}q^{2k}|q^{2}\right) _{k}\left( \rho ^{2}q^{4k}|q^{4k}\right)
_{n-k}\left( \rho ^{2}q^{4k+1}|q^{2}\right) _{n-k}} \\
&=&\sum_{k=0}^{n}\QATOPD[ ] {n}{k}_{q^{2}}\frac{(-1)^{n-k}q^{k(k-1)}}{\left(
\rho ^{2}q^{2k-1}|q^{2}\right) _{k}\left( \rho ^{2}q^{4k+1}|q^{2}\right)
_{n-k}}.
\end{eqnarray*}

Let us now change $q^{2}$ to $q$ and denote by $a\allowbreak =\allowbreak
\rho ^{2}q$. We get now: 
\begin{eqnarray}
0 &=&\left( a\right) _{2n-1}\sum_{k=0}^{n}\QATOPD[ ] {n}{k}_{q}\frac{%
(-1)^{k}q^{\binom{n-k}{2}}}{(a)_{k}\left( aq^{n+k-1}\right) _{n-k}}
\label{nr1} \\
&=&\sum_{k=0}^{n}\QATOPD[ ] {n}{k}_{q}(-1)^{k}q^{\binom{n-k}{2}}\left(
aq^{k}\right) _{n-1},  \notag \\
0 &=&\sum_{k=0}^{n}\QATOPD[ ] {n}{k}_{q}\frac{(-1)^{k}q^{\binom{k}{2}}}{%
\left( aq^{k-1}\right) _{k}\left( aq^{2k}\right) _{n-k}}  \label{nr2} \\
&=&\sum_{k=0}^{n}\QATOPD[ ] {n}{k}_{q}\frac{(-1)^{k}q^{\binom{k}{2}%
}(1-aq^{2k-1})}{\left( aq^{k-1}\right) _{n+1}}.  \notag
\end{eqnarray}
The last equalities hold since we have applied (\ref{knk2}) and (\ref{knk1}%
). Notice also, that the first of these identities is identical with (\ref%
{1p1}), and the second is identical with (\ref{1p2}) after setting $\beta
\allowbreak =\allowbreak a/q.$
\end{remark}

\subsection{Askey-Wilson and continuous $q-$Hermite}

Formulae (\ref{pnah}) and (\ref{hanap}) together with (\ref{ex_w_na_p}) and (%
\ref{ex_p_na_w}) allow to expand $n-$th AW polynomial (considered with
complex conjugate parameters) in the series of $q-$Hermite polynomials and
conversely. Namely, after relatively not complicated algebra we have%
\begin{eqnarray*}
\alpha _{n}\left( x|y,\rho _{1},z,\rho _{2},q\right)
&=&\sum_{k=0}^{n}h_{k}(x|q)c_{n,k}(y,\rho _{1},z,\rho _{2},q), \\
h_{n}(x|q) &=&\sum_{k=0}^{n}\alpha _{k}\left( x|y,\rho _{1},z,\rho
_{2},q\right) \bar{c}_{n,k}(y,\rho _{1},z,\rho _{2},q),
\end{eqnarray*}%
where 
\begin{gather*}
c_{n,k}(y,\rho _{1},z,\rho _{2},q)=\QATOPD[ ] {n}{k}_{q}\frac{\left( \rho
_{1}^{2}q^{k}\right) _{n-k}}{\left( \rho _{1}^{2}\rho
_{2}^{2}q^{n+k-1}\right) _{n-k}} \\
\times \sum_{s=0}^{n-k}\QATOPD[ ] {n-k}{s}_{q}\frac{\rho _{2}^{n-k-s}\rho
_{1}^{s}\left( \rho _{1}^{2}\rho _{2}^{2}q^{n+k-1}\right) _{s}}{\left( \rho
_{1}^{2}q^{k}\right) _{s}}b_{s}\left( y|q\right) g_{n-k-s}\left( z|y,\rho
_{1}\rho _{2}q^{n-1},q\right) , \\
\bar{c}_{n,k}(y,\rho _{1},z,\rho _{2},q)=\QATOPD[ ] {n}{k}_{q} \\
\times \sum_{s=0}^{n-k}\QATOPD[ ] {n-k}{s}_{q}\frac{\rho _{1}^{n-k-s}\rho
_{2}^{s}\left( \rho _{1}^{2}q^{k}\right) _{s}}{\left( \rho _{1}^{2}\rho
_{2}^{2}q^{2k}\right) _{s}}h_{n-k-s}(y|q)p_{s}(z|y,\rho _{1}\rho
_{2}q^{k},q).
\end{gather*}
Either following directly Corollary 3.1 of \cite{Szab-bAW} or applying (\ref%
{fc2n1}) and (\ref{fCN}), we get 
\begin{equation*}
f_{C2N}\left( x|y,\rho _{1},z,\rho _{2},q\right) =f_{h}\left( x|q\right) 
\frac{\left( \rho _{1}^{2},\rho _{2}^{2}\right) _{\infty
}\prod_{j=0}^{\infty }\omega \left( y,z|\rho _{1}\rho _{2}q^{j}\right) }{%
\left( \rho _{1}^{2}\rho _{2}^{2}\right) _{\infty }\prod_{j=0}^{\infty
}\omega \left( x,y|\rho _{1}q^{j}\right) \omega \left( x,z|\rho
_{2}q^{j}\right) },
\end{equation*}%
where, as before $f_{h}$ denotes the density that makes $q-$Hermite
polynomials orthogonal $q$-Hermite. Recall that $\hat{\alpha}_{n}$ is given
by \ref{ahat}.

Taking all these facts into account, we can formulate the following result:

\begin{theorem}
For complex $\left\vert y\right\vert <1$, $\left\vert z\right\vert <1$, $%
\left\vert \rho _{1}\right\vert <1$, $\left\vert \rho _{2}\right\vert <1$, $%
\left\vert q\right\vert <1$, we have

$:$i) for all $n\geq 1$, 
\begin{eqnarray*}
\sum_{k=0}^{n}c_{n,k}(y,\rho _{1},z,\rho _{2},q)\bar{c}_{j,0}(y,\rho
_{1},z,\rho _{2},q) &=&0, \\
\sum_{k=0}^{n}\bar{c}_{n,k}(y,\rho _{1},z,\rho _{2},q)c_{j,0}(y,\rho
_{1},z,\rho _{2},q) &=&0,
\end{eqnarray*}

ii) for $\left\vert x\right\vert <1$ 
\begin{gather*}
f_{C2N}(x|y,\rho _{1},z,\rho _{2},q)=f_{h}(x|q)\sum_{n\geq 0}\frac{h_{n}(x|q)%
}{\left( q\right) _{n}} \\
\times \sum_{s=0}^{n}\QATOPD[ ] {n}{s}_{q}\frac{\rho _{1}^{n-s}\rho
_{2}^{s}\left( \rho _{1}^{2}\right) _{s}}{\left( \rho _{1}^{2}\rho
_{2}^{2}\right) _{s}}h_{n-s}(y|q)p_{s}(z|y,\rho _{1}\rho _{2},q), \\
f_{h}(x|q)=f_{C2N}(x|y,\rho _{1},z,\rho _{2},q)\sum_{n\geq 0}\frac{\alpha
_{n}\left( x|y,\rho _{1},z,\rho _{2},q\right) }{\left( q\right)
_{n}\prod_{j=0}^{n-1}w(z,y|\rho _{1}\rho _{2}q^{j})}\frac{\left( \rho
_{1}^{2}\rho _{2}^{2}\right) _{2n}}{\left( \rho _{2}^{2}\right) _{n}\left(
\rho _{1}^{2}\rho _{2}^{2}q^{n-1}\right) _{n}^{2}} \\
\times \sum_{s=0}^{n}\QATOPD[ ] {n}{s}_{q}\frac{\rho _{2}^{n-s}\rho
_{1}^{s}\left( \rho _{1}^{2}\rho _{2}^{2}q^{n-1}\right) _{s}}{\left( \rho
_{1}^{2}\right) _{s}}b_{s}\left( y|q\right) g_{n-s}\left( z|y,\rho _{1}\rho
_{2}q^{n-1},q\right) .
\end{gather*}
\end{theorem}

The identities presented in this Theorem are depending on $5$ parameters
(including, $q$) and thus can be the source of many other interesting
identities if one assumes particular values of some these parameters and
leaves the others as unknowns, as it was done in previous subsections.

\subsection{Askey-Wilson and continuous dual $q-$Hahn}

The continuous dual $q-$Hahn (briefly CDqH) polynomials are described ,
e.g., in \cite{KLS}. Also, following this book we know that we pass from AW
to CDqH polynomials by setting the value of one of $4$ parameters to $0$.
These polynomials, their properties for the absolute values of the remaining 
$3$ parameters being less than $1$ were analyzed in \cite{Szab22} and \cite%
{Szab-bAW}. There are also simple, friendly connection coefficients between
these two sets of polynomials. So, let us recall for the sake of
completeness of the paper the basic definitions and the properties of these
two families of polynomials. The version of AW polynomials that $\left\{
w_{n}(x|a,b,c,d,q)\right\} $ we are going to analyze here, is given by the
three-term recurrence given in \cite{Szab22}(7.1). Formula (7.2) of \cite%
{Szab22} gives the value $\hat{\alpha}_{n}$ for this family. Namely, we have%
\begin{equation}
\hat{w}_{n}(a,b,c,d|q)=\frac{\left( abcdq^{n-1}\right) _{n}\left(
ab,ac,ad,bc,bd,cd,q\right) _{n}}{\left( abcd\right) _{2n}}.  \label{alfn}
\end{equation}%
Now recall the CDqH polynomials, denoted in the paper by $\psi $ satisfy:%
\begin{equation*}
\psi _{n}\left( x|b,c,d,q\right) \allowbreak =\allowbreak w_{n}(x|0,b,c,d,q).
\end{equation*}

Hence, we have 
\begin{equation}
\hat{\psi}_{n}\left( b,c,d|q\right) =\left( bc,bd,cd,q\right) _{n}.
\label{cdhn}
\end{equation}

We will need also the formulae for the densities that make these families of
polynomials orthogonal. Namely, following formula (7.3) of \cite{Szab22} we
have%
\begin{gather}
f_{AW}\left( x|a,b,c,d,q\right) =f_{h}\left( x|q\right) \varphi _{h}\left(
x|a,q\right) \varphi _{h}\left( x|b,q\right) \varphi _{h}\left( x|c,q\right)
\varphi _{h}\left( x|d,q\right) \times  \label{fAW} \\
\frac{\left( ab,ac,ad,bc,bd,cd\right) _{\infty }}{\left( abcd\right)
_{\infty }}.  \notag
\end{gather}%
where $f_{h}$ is given by (\ref{fh}) and 
\begin{equation*}
\varphi _{h}(x|t,q)=\frac{1}{\prod_{k=0}^{\infty }v\left( x|tq^{k}\right) }.
\end{equation*}%
with $v(x|a)\allowbreak =\allowbreak 1-2ax+a^{2}$. Hence, consequently we
have 
\begin{equation}
f_{CH}\left( x|b,c,d,q\right) =\left( bc,bd,cd\right) _{\infty }f_{h}\left(
x|q\right) \varphi _{h}\left( x|b,q\right) \varphi _{h}\left( x|c,q\right)
\varphi _{h}\left( x|d,q\right) .  \notag
\end{equation}

In \cite{Szab-bAW} (Lemma2.1) the connection coefficients between AW and
CDqH families of polynomials were given. Namely, we have:

\begin{gather}
w_{n}(x|a,b,c,d,q)\allowbreak =\allowbreak \sum_{i=0}^{n}\QATOPD[ ] {n}{i}%
_{q}\left( -a\right) ^{n-i}q^{\binom{n-i}{2}}\frac{\left(
bcq^{i},bdq^{i},cdq^{i}\right) _{n-i}}{\left( abcdq^{n+i-1}\right) _{n-i}}%
\psi _{i}\left( x|b,c,d,q\right) ,  \label{a_na_c} \\
\psi _{n}\left( x|b,c,d,q\right) =\sum_{i=0}^{n}\QATOPD[ ] {n}{i}_{q}a^{n-i}%
\frac{\left( bcq^{i},bdq^{i},cdq^{i}\right) _{n-i}}{\left( abcdq^{2i}\right)
_{n-i}}w_{i}\left( x|a,b,c,d,q\right) .  \label{c_na_a}
\end{gather}

Given this we have the following result.

\begin{theorem}
\label{AWCDH}

For all complex $\left\vert x\right\vert ,\left\vert a\right\vert
,\left\vert b\right\vert ,\left\vert c\right\vert ,\left\vert d\right\vert
,\left\vert q\right\vert <1$ we have%
\begin{gather*}
\frac{f_{AW}\left( x|a,b,c,d,q\right) }{f_{CH}\left( x|b,c,d,q\right) }=%
\frac{\left( ab,ac,ad\right) _{\infty }}{\left( abcd\right) _{\infty }}%
\varphi _{h}\left( x|a,q\right) \\
=\sum_{n\geq 0}\frac{a^{n}}{\left( abcd,q\right) _{n}}\psi _{n}\left(
x|b,c,d,q\right) , \\
\frac{f_{CH}\left( x|b,c,d,q\right) }{f_{AW}\left( x|a,b,c,d,q\right) }=%
\frac{\left( abcd\right) _{\infty }}{\left( ab,ac,ad\right) _{\infty
}\varphi _{h}\left( x|a,q\right) } \\
=\sum_{n\geq 0}\left( -a\right) ^{n}q^{\binom{n}{2}}\frac{\left( abcd\right)
_{2n}}{\left( abcdq^{n-1}\right) _{n}^{2}\left( ab,ac,ad,q\right) _{n}}%
w_{n}(x|a,b,c,d,q).
\end{gather*}
\end{theorem}

\begin{proof}
We apply expansion (\ref{basicEx}), with 
\begin{equation*}
\hat{c}_{n,0}(a,b,c,d|q)=a^{n}\frac{\left( ac,bd,cd\right) _{n}}{\left(
abcbd\right) _{n}}
\end{equation*}%
and (\ref{cdhn}) to get the first expansion and 
\begin{equation*}
c_{n,0}(a,b,c,d|q)=\left( -a\right) ^{n}q^{\binom{n}{0}}\frac{\left(
ac,bd,cd\right) _{n}}{\left( abcbdq^{n}\right) _{n}}
\end{equation*}%
and (\ref{alfn}) to get the second one.
\end{proof}

\begin{remark}
Let us notice that we can get CC $\left\{ c_{n,j}\right\} _{n\geq 1,0\leq
j\leq n}$ and $\left\{ \bar{c}_{n,j}\right\} _{n\geq 1,0\leq j\leq n}$ from (%
\ref{a_na_c}) and (\ref{c_na_a}) then apply (\ref{01}) and (\ref{02}) in
order to get the following identities%
\begin{gather}
\delta _{n,0}=\sum_{k=0}^{n}\QATOPD[ ] {n}{k}_{q}\left( -a\right) ^{n-k}q^{%
\binom{n-k}{2}}\frac{\left( bcq^{k}\right) _{n-k}\left( bdq^{k}\right)
_{n-k}\left( cdq^{k}\right) _{n-k}}{\left( abcdq^{n+k-1}\right) _{n-k}}
\label{first} \\
\times a^{k}\frac{\left( bc\right) _{k}\left( bd\right) _{k}\left( cd\right)
_{k}}{\left( abcd\right) _{k}}  \notag \\
=a^{n}\left( bc\right) _{n}\left( bd\right) _{n}\left( cd\right)
_{n}\sum_{k=0}^{n}\QATOPD[ ] {n}{k}_{q}\left( -1\right) ^{n-k}q^{\binom{n-k}{%
2}}\frac{1}{\left( abcdq^{n+k-1}\right) _{n-k}\left( abcd\right) _{k}} 
\notag
\end{gather}%
and 
\begin{gather}
\delta _{n,0}=\sum_{j=0}^{n}\QATOPD[ ] {n}{j}_{q}a^{n-j}\frac{\left(
bcq^{j}\right) _{n-j}\left( bdq^{j}\right) _{n-j}\left( cdq^{j}\right) _{n-j}%
}{\left( abcdq^{2j}\right) _{n-j}}\left( -a\right) ^{j}q^{\binom{j}{2}}\frac{%
\left( bc\right) _{j}\left( bd\right) _{j}\left( cd\right) _{j}}{\left(
abcdq^{j-1}\right) _{j}}  \label{second} \\
=a^{n}\left( bc\right) _{n}\left( bd\right) _{n}\left( cd\right)
_{n}\sum_{j=0}^{n}\QATOPD[ ] {n}{j}_{q}\left( -1\right) ^{j}\frac{q^{\binom{j%
}{2}}}{\left( abcdq^{2j}\right) _{n-j}\left( abcdq^{j-1}\right) _{j}}. 
\notag
\end{gather}%
These identities can be further simplified by dividing both sides by $%
a^{n}\left( bc\right) _{n}\left( bd\right) _{n}\left( cd\right) _{n}$. Now
let us change $abcd$ to $x$. Now apply (\ref{knk2}) and (\ref{knk1}) and get
identities (\ref{nr1}) and (\ref{nr2}). Thus, this case provides the
another, simpler justification of (\ref{nr1}) and (\ref{nr2}).
\end{remark}

\section{Proofs\label{Dow}}

\begin{proof}[Proof of Proposition \protect\ref{ccon}.]
The first four statements, i.e. ((\ref{ebb}), (\ref{oebb}), (\ref{ea1/2}), (%
\ref{oea1/2}), (\ref{ea3/2}), (\ref{oea3/2})) are obvious. Hence, let's
concentrate on the next four. We have%
\begin{gather*}
c_{n,j}(a,1/2;b,1/2)=\frac{\left( 1/2\right) ^{\left( n\right) }\left(
a-1/2+n\right) ^{\left( j\right) }}{\left( n-j\right) !\left( 1/2\right)
^{\left( j\right) }\left( b-1/2+j\right) ^{\left( j\right) }} \\
\times \sum_{k=j}^{n}\left( -1\right) ^{k-j}\binom{n-j}{k-j}\frac{\left(
a-1/2+n+j\right) ^{\left( k-j\right) }\left( 1/2+j\right) ^{\left(
k-j\right) }}{\left( 1/2+j\right) ^{\left( k-j\right) }\left(
b+1/2+2j\right) ^{\left( k-j\right) }}
\end{gather*}

\begin{gather*}
=\frac{\left( 1/2\right) ^{\left( n\right) }\left( a-1/2+n\right) ^{\left(
j\right) }}{\left( n-j\right) !\left( 1/2\right) ^{\left( j\right) }\left(
b-1/2+j\right) ^{\left( j\right) }\left( b+1/2+2j\right) ^{\left( n-j\right)
}} \\
\times \sum_{k=j}^{n}\left( -1\right) ^{k-j}\binom{n-j}{k-j}\frac{\left(
a-1/2+n+j\right) ^{\left( k-j\right) }\left( b+1/2+2j\right) ^{\left(
n-j\right) }}{\left( b+1/2+2j\right) ^{\left( k-j\right) }}
\end{gather*}

\begin{gather*}
=\frac{\left( 1/2\right) ^{\left( n\right) }\left( a-1/2+n\right) ^{\left(
j\right) }}{\left( n-j\right) !\left( 1/2\right) ^{\left( j\right) }\left(
b-1/2+j\right) ^{\left( j\right) }\left( b+1/2+2j\right) ^{\left( n-j\right)
}} \\
\times \sum_{s=0}^{n-j}\left( -1\right) ^{s}\binom{n-j}{s}\frac{\left(
a-1/2+n+j\right) ^{\left( k-j\right) }\left( b+1/2+2j\right) ^{\left(
n-j\right) }}{\left( b+1/2+2j\right) ^{\left( s\right) }} \\
=(-1)^{n-j}\frac{\left( 1/2\right) ^{\left( n\right) }\left( a-1/2+n\right)
^{\left( j\right) }}{\left( n-j\right) !\left( 1/2\right) ^{\left( j\right)
}\left( b-1/2+j\right) ^{\left( j\right) }\left( b+1/2+2j\right) ^{\left(
n-j\right) }} \\
\times \sum_{s=0}^{n-j}\left( -1\right) ^{n-j-s}\binom{n-j}{s}\left(
a-1/2+n+j\right) ^{\left( s\right) }\left( b+1/2+2j+s\right) ^{\left(
n-j-s\right) }.
\end{gather*}%
Now, denoting $x\allowbreak =\allowbreak a+1/2+2j$ and $y\allowbreak
=\allowbreak b+1/2+2j$, noticing that $x-y\allowbreak =\allowbreak a-b$ and
recalling (\ref{rozn3}), we see that the sum above is equal to $\left(
a-b\right) ^{\left( n-j\right) }$, proving (\ref{a1/2}). We prove (\ref{a3/2}%
) likewise. Now let us consider (\ref{ab}). We have%
\begin{gather*}
c_{n,j}(a,b;b,b)=\sum_{k=j}^{n}(-1)^{k-j}\frac{%
(a+b+n-1)^{(k)}(b+k)^{(n-k)}k!(b+j)^{(k-j)}}{%
k!(n-k)!(k-j)!(2b+j-1)^{(j)}(2b+2j)^{(k-j)}} \\
=\frac{(b+j)^{(n-j)}(a+b+n-1)^{(j)}}{(n-j)!(2b+j-1)^{(j)}(2b+2j)^{(n-j)}} \\
\times \sum_{k=j}^{n}(-1)^{k-j}\binom{n-j}{k-j}\frac{%
(a+b+n-1+j)^{(k-j)}(2b+2j)^{(n-j)}}{(2b+2j)^{(k-j)}}
\end{gather*}

\begin{gather*}
=\frac{(b+j)^{(n-j)}(a+b+n-1)^{(j)}}{(n-j)!(2b+j-1)^{(j)}(2b+2j)^{(n-j)}} \\
\times \sum_{s=0}^{n-j}(-1)^{s}\binom{n-j}{s}%
(a+b+n-1+j)^{(s)}(2b+2j+s)^{(n-j-s)} \\
=\frac{(b+j)^{(n-j)}(a+b+n-1)^{(j)}}{(n-j)!(2b+j-1)^{(j)}(2b+2j)^{(n-j)}}%
(-(a-b)-(n-j)-1)^{(n-j)}.
\end{gather*}

In the last equality, we used (\ref{rozn}). Now notice that obviously, we
have%
\begin{equation*}
(-1)^{n}(a)^{\left( n\right) }=(-a-n+1)^{(n)}.
\end{equation*}%
Let us consider now (\ref{ba}). We have%
\begin{gather*}
c_{n,j}\left( b,b;a,b\right) =\frac{1}{\left( n-j\right) !}%
\sum_{k=j}^{n}(-1)^{k-j}\binom{n-j}{k-j} \\
\times \frac{\left( 2b+n-1\right) ^{\left( k\right) }\left( b+k\right)
^{\left( n-k\right) }\left( b+j\right) ^{\left( k-j\right) }}{\left(
a+b+j-1\right) ^{\left( j\right) }\left( a+b+2j\right) ^{\left( k-j\right) }}
\\
=\frac{\left( b+j\right) ^{\left( n-j\right) }\left( 2b+n-1\right) ^{\left(
j\right) }}{\left( n-j\right) !\left( a+b+j-1\right) ^{\left( j\right)
}\left( a+b+2j\right) ^{\left( n-j\right) }}\times \\
\sum_{k=j}^{n}\left( -1\right) ^{k-j}\binom{n-j}{k-j}\left( 2b+n-1+j\right)
^{\left( k-j\right) }\left( a+b+k+j\right) ^{\left( n-k\right) }
\end{gather*}

\begin{gather*}
=\frac{\left( b+j\right) ^{\left( n-j\right) }\left( 2b+n-1\right) ^{\left(
j\right) }}{\left( n-j\right) !\left( a+b+j-1\right) ^{\left( j\right)
}\left( a+b+2j\right) ^{\left( n-j\right) }} \\
\times \sum_{s=0}^{n-j}\left( -1\right) ^{s}\binom{n-j}{s}\left(
2b+n-1+j\right) ^{\left( s\right) }\left( a+b+s+2j\right) ^{\left(
n-j-s\right) } \\
=\frac{\left( b+j\right) ^{\left( n-j\right) }\left( 2b+n-1\right) ^{\left(
j\right) }}{\left( n-j\right) !\left( a+b+j-b1\right) ^{\left( j\right)
}\left( a+b+2j\right) ^{\left( n-j\right) }} \\
\times \left( a+b+2j-2b-n+1-j\right) ^{\left( n-j\right) } \\
=\frac{\left( b+j\right) ^{\left( n-j\right) }\left( 2b+n-1\right) ^{\left(
j\right) }(a-b)^{\left( n\right) }}{\left( n-j\right) !\left(
a+b+j-b1\right) ^{\left( j\right) }\left( a+b+2j\right) ^{\left( n-j\right) }%
}.
\end{gather*}

To get the second part of (\ref{ab}) and the second part of (\ref{ba}) we
first recall use Proposition \ref{elem}(\ref{odwrkol}).

The fact that $c_{n,j}(a,a;b,b)\allowbreak =\allowbreak 0$ for odd $n-j$
follows symmetry of the distribution $h(x|a,a)$ and was noticed in (\ref%
{conn4}). Thus, it remains to consider the case when $n-j$ is even. In order
to get $n-j$ even we have to consider two cases. The first one is when $n$
is even and all $j\leq n$ must be even and the case when $n$ is odd and all $%
j\leq n$ must be odd. Now we have to refer to assertions of Lemma \ref{even}
and formulae (\ref{a1/2}) and (\ref{a3/2}), that were already proved. Let us
consider $n\allowbreak =\allowbreak 2m$ even. Then we have%
\begin{eqnarray*}
J_{2m}(x|a,a) &=&\frac{m!\left( a+m\right) ^{\left( m\right) }}{(2m)!}%
J_{m}(2x^{2}-1|a,1/2) \\
&=&\frac{m!\left( a+m\right) ^{\left( m\right) }}{(2m)!}%
\sum_{j=0}^{m}c_{m,j}(a,1/2;b,1/2)J_{j}(2x^{2}-1|b,1/2) \\
&=&\frac{m!\left( a+m\right) ^{\left( m\right) }}{(2m)!}%
\sum_{j=0}^{m}c_{m,j}(a,1/2;b,1/2)J_{2j}(x|b,b).
\end{eqnarray*}

Hence, 
\begin{gather*}
c_{2m,2j}(a,a;b,b)=\frac{m!\left( a+m\right) ^{\left( m\right) }}{(2m)!}%
c_{m,j}(a,1/2;b,1/2) \\
=\frac{m!\left( a+m\right) ^{\left( m\right) }}{(2m)!}\frac{\left(
1/2\right) ^{\left( m\right) }\left( a-b\right) ^{\left( m-j\right) }\left(
a-1/2+m\right) ^{\left( j\right) }\left( b-1/2+2j\right) }{\left( m-j\right)
!\left( 1/2\right) ^{\left( j\right) }\left( b+1/2+2j\right) ^{\left(
m-j\right) }\left( b-1/2+j\right) ^{\left( j+1\right) }}.
\end{gather*}

Similarly, when $n\allowbreak =\allowbreak 2m+1$, we argue as follows 
\begin{eqnarray*}
J_{2m+1}(x|a,a) &=&\frac{m!(a+m)^{(m+1)}}{(2m+1)!}xJ_{m}(2x^{2}-1|a,3/2) \\
&=&\frac{m!(a+m)^{(m+1)}}{(2m+1)!}%
x\sum_{j=0}^{m}c_{m,j}(a,3/2;b,3/2)J_{j}(2x^{2}-1|b,3/2) \\
&=&\frac{m!(a+m)^{(m+1)}}{(2m+1)!}%
\sum_{j=0}^{m}c_{m,j}(a,3/2;b,3/2)xJ_{j}(2x^{2}-1|b,3/2) \\
&=&\frac{m!(a+m)^{(m+1)}}{(2m+1)!}%
\sum_{j=0}^{m}c_{m,j}(a,3/2;b,3/2)J_{2j+1}(x|b,b).
\end{eqnarray*}%
Hence, we have 
\begin{gather*}
c_{2m+1,2j+1}(a,a;b,b)=\frac{m!(a+m)^{(m+1)}}{(2m+1)!}c_{m,j}(a,3/2;b,3/2) \\
=\frac{m!(a+m)^{(m+1)}}{(2m+1)!}\frac{\left( 3/2\right) ^{\left( m\right)
}\left( a-b\right) ^{\left( m-j\right) }\left( a+1/2+m\right) ^{\left(
j\right) }(b+1/2+2j)}{\left( m-j\right) !\left( 3/2\right) ^{\left( j\right)
}\left( b+3/2+2j\right) ^{\left( m-j\right) }\left( b+1/2+j\right) ^{\left(
j+1\right) }}.
\end{gather*}
\end{proof}

\begin{proof}[Proof of Corollary \protect\ref{Upr}]
In all simplifications below we will use identity: 
\begin{equation*}
\left( a\right) ^{\left( k+n\right) }=\left( a\right) ^{\left( k\right)
}\left( a+k\right) ^{\left( n\right) }.
\end{equation*}%
In order to get the first identity, we start with (\ref{ab}) 
\begin{gather*}
\frac{(b+j)^{(n-j)}(a-b)^{(n-j)}(a+b+n-1)^{(j)}}{%
(n-j)!(2b+j-1)^{(j)}(2b+2j)^{(n-j)}} \\
=\frac{1}{\left( n-j\right) !}\sum_{k=j}^{n}\left( -1\right) ^{k-j}\binom{n-j%
}{k-j}\frac{\left( a+b+n-1\right) ^{\left( k\right) }(a+m)^{(n-k)}\left(
b+j\right) ^{\left( k-j\right) }}{\left( 2b+j-1\right) ^{\left( j\right)
}\left( 2b+2j\right) ^{\left( k-j\right) }} \\
=\frac{\left( a+b+n-1\right) ^{\left( j\right) }}{\left( n-j\right) !\left(
2b+j-1\right) ^{\left( j\right) }\left( 2b+2j\right) ^{\left( n-j\right) }}
\\
\times \sum_{k=j}^{n}\left( -1\right) ^{k-j}\binom{n-j}{k-j}\left(
a+b+j+n-1\right) ^{\left( k-j\right) } \\
\times \left( b+j\right) ^{\left( k-j\right) }\left( a+j\right) ^{\left(
n-k\right) }\left( 2b+j+k\right) ^{\left( n-k\right) }.
\end{gather*}%
After cancelling common factors on both sides, we get%
\begin{eqnarray*}
&&(b+j)^{(n-j)}(a-b)^{(n-j)} \\
&=&\sum_{k=j}^{n}\left( -1\right) ^{k-j}\binom{n-j}{k-j}\left(
a+b+j+n-1\right) ^{\left( k-j\right) } \\
&&\times \left( b+j\right) ^{\left( k-j\right) }\left( a+j\right) ^{\left(
n-k\right) }\left( 2b+j+k\right) ^{\left( n-k\right) } \\
&=&\sum_{s=0}^{n-j}\left( -1\right) ^{s}\binom{n-j}{s}\left(
a+b+j+n-1\right) ^{\left( s\right) } \\
&&\times \left( b+j\right) ^{\left( s\right) }\left( a+j\right) ^{\left(
n-j-s\right) }\left( 2b+2j+s\right) ^{\left( n-j-s\right) }.
\end{eqnarray*}%
Now we set $a+j\allowbreak =\allowbreak x$, $b+j\allowbreak =\allowbreak y$,
and change $n-j\allowbreak $to $\allowbreak n$ and $s$ to $j$ and get (\ref%
{x-y}). The formula (\ref{y-x}) we prove in the similar way. The formulae (%
\ref{001}) and (\ref{002}) are justified using (\ref{01}) once firstly with $%
c_{n,j}(a,b;b,b)$ and $c_{n,j}(b,b;a,b)$ and secondly with $c_{n,j}(b,b;a,b)$
and $c_{n,j}\left( a,b;b,b\right) $. The formulae (\ref{dd1}) and (\ref{dd2}%
) are proven in a similar way using the following consequence of the formula
(\ref{conn3}):%
\begin{equation*}
c_{n,j}(a,a;b,b)\allowbreak =\allowbreak
\sum_{k=j}^{n}c_{n,k}(a,a;a,b)c_{k,j}(a,b,b,b).
\end{equation*}%
First, we take $n\allowbreak =\allowbreak 2m$ and proceed as follows:%
\begin{eqnarray*}
&&\frac{(2b+2j-1)\left( 2a+2m-1\right) ^{\left( j\right) }\left( a-b\right)
^{\left( m\right) }\left( b+j\right) ^{\left( m\right) }\left( a+m+j\right)
^{\left( m\right) }}{\left( m\right) !\left( 2b+j-1\right) ^{\left(
2m+1+j\right) }} \\
&=&\frac{(2b+2j-1)}{(2m)!}\sum_{k=j}^{2m+j}\left( -1\right) ^{k-j}\binom{2m}{%
k-j} \\
&&\times \frac{\left( 2a+2m+j-1\right) ^{\left( k\right) }\left( a+k\right)
^{\left( 2m+j-k\right) }\left( b+j\right) ^{\left( k-j\right) }}{\left(
2b+j-1\right) ^{\left( k+1\right) }}.
\end{eqnarray*}%
Now, we cancel out $(2b+2j-1)$ on both sides. Further, we change the index
of summation, setting $s=\allowbreak k-j$, then we multiply both sides by $%
\left( 2b+j-1\right) ^{\left( 2m+1+j\right) }$ and divide both sides by $%
\left( 2a+2m-1\right) ^{\left( j\right) }$. We get then:%
\begin{eqnarray*}
&&\frac{\left( 2m\right) !}{m!}\left( a-b\right) ^{\left( m\right) }\left(
b+j\right) ^{\left( m\right) }\left( a+m+j\right) ^{\left( m\right) } \\
&=&\sum_{s=0}^{2m}\left( -1\right) ^{s}\binom{2m}{s}\left( 2a+2m+2j-1\right)
^{\left( s\right) }\left( a+j+s\right) ^{\left( 2m-s\right) } \\
&&\times \left( b+j\right) ^{\left( s\right) }\left( 2b+2j+s\right) ^{\left(
2m-s\right) }.
\end{eqnarray*}%
The last step is to define $x\allowbreak =\allowbreak a+j$ and $y\allowbreak
=\allowbreak b+j$ and notice that $x-y\allowbreak =\allowbreak a-b.$

In order to get (\ref{dd2}), we use the fact that for $n\allowbreak
=\allowbreak 2m+1+j$ we have $c_{n,j}(a,a;b,b)\allowbreak =\allowbreak 0$
for all $m\geq 0$ and $j\geq 0$ and then proceed likewise.

In order to get (\ref{aaababaa}) we start with the obvious identity 
\begin{equation*}
\sum_{k=j}^{n}c_{nk}(a,a;a,b)c_{k,j}(a,b;a,a)=%
\begin{cases}
1\text{,} & \text{if }n=j\geq 0\text{;} \\ 
0\text{,} & \text{if }n>j\geq 0\text{.}%
\end{cases}%
\end{equation*}%
We insert (\ref{ab}) and (\ref{ba}) and first put all expressions depending
only on $n$ and $j$ in the form of the sum, getting%
\begin{eqnarray*}
&&\left( -1\right) ^{n-j}\frac{\left( b+j\right) ^{\left( n-j\right) }\left(
a+b+2j-1\right) }{\left( n-j\right) !}\sum_{k=j}^{n}\binom{n-j}{k-j}\times \\
&&\frac{\left( a-b\right) ^{\left( n-k\right) }\left( a+b+n-1\right)
^{\left( k\right) }\left( b-a\right) ^{k-j}\left( 2b+2k-1\right) \left(
2b+k-1\right) ^{\left( j\right) }}{\left( 2b+k-1\right) ^{\left( n+1\right)
}\left( a+b+j-1\right) ^{\left( k+1\right) }} \\
&=&\left( -1\right) ^{n-j}\frac{\left( b+j\right) ^{\left( n-j\right)
}\left( a+b+2j-1\right) }{\left( n-j\right) !}\sum_{k=j}^{n}\binom{n-j}{k-j}%
\times \\
&&\frac{\left( a-b\right) ^{\left( n-k\right) }\left( a+b+n-1\right)
^{\left( k\right) }\left( b-a\right) ^{\left( k-j\right) }\left(
2b+2k-1\right) }{\left( 2b+k+j-1\right) ^{\left( n-j+1\right) }\left(
a+b+j-1\right) ^{\left( k+1\right) }}
\end{eqnarray*}%
\begin{eqnarray*}
&=&\left( -1\right) ^{n-j}\frac{\left( b+j\right) ^{\left( n-j\right)
}\left( a+b+2j-1\right) }{\left( n-j\right) !}=\sum_{s=0}^{n-j}\binom{n-j}{s}%
\times \\
&&\frac{\left( a-b\right) ^{\left( n-j-s\right) }\left( a+b+n-1\right)
^{\left( j+s\right) }\left( b-a\right) ^{\left( s\right) }\left(
2b+2j+2s-1\right) }{\left( 2b+2j+s-1\right) ^{\left( n-j+1\right) }\left(
a+b+j-1\right) ^{\left( j+s+1\right) }}.
\end{eqnarray*}%
Now we set $n-j\allowbreak =\allowbreak m$. We get then:%
\begin{eqnarray*}
&&\left( -1\right) ^{m}\frac{\left( b+j\right) ^{\left( m\right) }\left(
a+b+2j-1\right) \left( a+b+m+j-1\right) ^{\left( j\right) }}{\left( m\right)
!\left( a+b+j-1\right) ^{\left( j\right) }}\times \\
&&\sum_{s=0}^{m}\binom{m}{s}\frac{\left( a-b\right) ^{\left( m-s\right)
}\left( a+b+m+2j-1\right) ^{\left( s\right) }\left( b-a\right) ^{\left(
s\right) }\left( 2b+2j+2s-1\right) }{\left( 2b+2j+s-1\right) ^{\left(
m+1\right) }\left( a+b+2j-1\right) ^{\left( s+1\right) }}.
\end{eqnarray*}%
Further we cancel out $\left( a+b+2j-1\right) $ and denote $x\allowbreak
=\allowbreak a+j$ and $y\allowbreak =\allowbreak b+j$. We get then:%
\begin{eqnarray*}
&&\left( -1\right) ^{m}\frac{\left( y\right) ^{\left( m\right) }\left(
x+y+m-j-1\right) ^{\left( j\right) }}{\left( m\right) !\left( x+y-j-1\right)
^{\left( j\right) }}\times \\
&&\sum_{s=0}^{m}\binom{m}{s}\frac{\left( x-y\right) ^{\left( m-s\right)
}\left( x+y+m-1\right) ^{\left( s\right) }\left( y-x\right) ^{\left(
s\right) }\left( 2y+2s-1\right) }{\left( 2y+s-1\right) ^{\left( m+1\right)
}\left( x+y\right) ^{\left( s\right) }}.
\end{eqnarray*}%
Finally, we split $\left( 2y+s-1\right) ^{\left( m+1\right) }$ to $\left(
2y+s-1\right) ^{\left( s\right) }\left( 2y+2s-1\right) ^{\left( m-s+1\right)
}$ and cancel our $\left( 2y+2s-1\right) .$

We start with the identity 
\begin{equation*}
\sum_{k=j}^{n}c_{2n,2k}(a,a;b,b)c_{2k,2j}(b,b;a,a)=%
\begin{cases}
0\text{,} & \text{if }n>j\geq 0\text{;} \\ 
1\text{,} & \text{if }n\geq j\geq 0\text{.}%
\end{cases}%
\end{equation*}%
We have, after inserting (\ref{aabb})%
\begin{eqnarray*}
&&\sum_{k=j}^{n}\frac{(2a+4j-1)\left( b-a\right) ^{\left( k-j\right) }\left(
a+2j\right) ^{\left( k-j\right) }\left( 2b+2k-1\right) ^{\left( 2j\right)
}\left( b+j+k\right) ^{\left( k-j\right) }}{(k-j)!\left( 2a+2j-1\right)
^{\left( 1+2k\right) }}\times \\
&&\frac{\left( 2b+4k-1\right) \left( a-b\right) ^{\left( n-k\right) }\left(
b+2k\right) ^{\left( n-k\right) }\left( 2a+2n-1\right) ^{\left( 2k\right)
}\left( a+k+n\right) ^{\left( n-k\right) }}{\left( n-k\right) !\left(
2b+2k-1\right) ^{\left( 2n+1\right) }}.
\end{eqnarray*}%
Now, we try to put all expressions that depend on only $n$ and $j$ outside
the sum. So we have further%
\begin{eqnarray*}
&&\frac{\left( 2a+2n-1\right) ^{\left( 2j\right) }}{\left( n-j\right)
!\left( 2a+2j-1\right) ^{\left( 2j\right) }}\sum_{k=j}^{n}\binom{n-j}{k-j}%
\frac{\left( b-a\right) ^{\left( k-j\right) }\left( a+2j\right) ^{\left(
k-j\right) }\left( b+j+k\right) ^{\left( k-j\right) }}{\left( 2a+4j\right)
^{\left( 2k-2j\right) }}\times \\
&&\frac{\left( 2b+4k-1\right) \left( 2a+2n+2j-1\right) ^{\left( 2k-2j\right)
}\left( a+k+n\right) ^{\left( n-k\right) }\left( a-b\right) ^{\left(
n-k\right) }\left( b+2k\right) ^{\left( n-k\right) }}{\left(
2b+2k+2j-1\right) ^{\left( 2n-2j+1\right) }} \\
&=&\frac{\left( 2a+2n-1\right) ^{\left( 2j\right) }}{\left( n-j\right)
!\left( 2a+2j-1\right) ^{\left( 2j\right) }}\sum_{k=j}^{n}\binom{n-j}{k-j}%
\frac{\left( b-a\right) ^{\left( k-j\right) }\left( a+2j\right) ^{\left(
k-j\right) }\left( b+j+k\right) ^{\left( k-j\right) }}{\left( 2a+4j\right)
^{\left( 2k-2j\right) }}\times \\
&&\frac{\left( 2a+2n+2j-1\right) ^{\left( 2k-2j\right) }\left( a+k+n\right)
^{\left( n-k\right) }\left( a-b\right) ^{\left( n-k\right) }\left(
b+2k\right) ^{\left( n-k\right) }}{\left( 2b+2k+2j-1\right) ^{\left(
2k-2j\right) }\left( 2b+4k\right) ^{\left( 2n-2k\right) }}.
\end{eqnarray*}%
Now, we denote $s\allowbreak =\allowbreak k-j$ and $m\allowbreak
=\allowbreak n-j$. We have%
\begin{eqnarray*}
&&\frac{\left( 2a+2m+2j-1\right) ^{\left( 2j\right) }}{\left( m\right)
!\left( 2a+2j-1\right) ^{\left( 2j\right) }}\sum_{s=0}^{m}\binom{m}{s}\frac{%
\left( b-a\right) ^{\left( s\right) }\left( a+2j\right) ^{\left( s\right)
}\left( b+2j+s\right) ^{\left( s\right) }}{\left( 2a+4j\right) ^{\left(
2s\right) }}\times \\
&&\frac{\left( 2a+2m+4j-1\right) ^{\left( 2s\right) }\left( a+2j+s+m\right)
^{\left( m-s\right) }\left( a-b\right) ^{\left( m-s\right) }\left(
b+2j+2s\right) ^{\left( m-s\right) }}{\left( 2b+2s+4j-1\right) ^{\left(
2s\right) }\left( 2b+4j+4s\right) ^{\left( 2m-2s\right) }}.
\end{eqnarray*}%
Now, we set $x\allowbreak =\allowbreak a+2j$ and $y\allowbreak =\allowbreak
b+2j$ and notice that $x-y\allowbreak =\allowbreak a-b$. hence we get%
\begin{eqnarray*}
&&\frac{\left( 2x+2m-1\right) ^{\left( 2j\right) }}{\left( m\right) !\left(
2x-1\right) ^{\left( 2j\right) }}\sum_{s=0}^{m}\binom{m}{s}\frac{\left(
y-x\right) ^{\left( s\right) }\left( x\right) ^{\left( s\right) }\left(
y+s\right) ^{\left( s\right) }}{\left( 2x\right) ^{\left( 2s\right) }}\times
\\
&&\frac{\left( 2x+2m-1\right) ^{\left( 2s\right) }\left( x+m+s\right)
^{\left( m-s\right) }\left( x-y\right) ^{\left( m-s\right) }\left(
y+2s\right) ^{\left( m-s\right) }}{\left( 2y+2s-1\right) ^{\left( 2s\right)
}\left( 2y+4s\right) ^{\left( 2m-2s\right) }}.
\end{eqnarray*}%
Now, we apply the following identity%
\begin{equation*}
\left( 2z\right) ^{\left( 2t\right) }=4^{t}\left( z\right) ^{\left( s\right)
}\left( z+1/2\right) ^{\left( s\right) },
\end{equation*}%
and get the assertion.
\end{proof}

\end{document}